\newtheorem{theorem}{Theorem}[section]
\newtheorem{proposition}[theorem]{Proposition}
\newtheorem{lemma}[theorem]{Lemma}
\theoremstyle{definition}
\newtheorem{remark}[theorem]{Remark}
\newtheorem{conjecture/question}[theorem]{Conjecture/Question}
\newtheorem{remark/definition}[theorem]{Remark/Definition}
\newtheorem{terminology/notation}[theorem]{Terminology/Notation}
\def\GG{{\textbf G}}
\def\PP{{\textbf P}}
\def\OO{\mathcal{O}}
\def\cD{\mathcal{D}}
\def\F{\mathcal{F}}
\def\P{\mathcal{P}}
\def\cS{\mathcal{S}}
\def\cM{\mathcal{M}}
\def\cU{\mathcal{U}}
\def\Pic0{{\rm Pic}^0(X)}
\def\ff{\overline{\mathcal{F}}}
\def\mm{\overline{\mathcal{M}}}
\def\ss{\overline{\mathcal{S}}}
\def\dd{\overline{\mathcal{D}}}
\def\thet{\overline{\Theta}_{\mathrm{null}}}
\begin{document}
\title{The intermediate type of certain moduli spaces of curves}

\author[G. Farkas]{Gavril Farkas}

\address{Humboldt-Universit\"at zu Berlin, Institut F\"ur Mathematik,  Unter den Linden 6
\hfill \newline\texttt{}
 \indent 10099 Berlin, Germany} \email{{\tt farkas@math.hu-berlin.de}}
\thanks{}

\author[A. Verra]{Alessandro Verra}
\address{Universit\'a Roma Tre, Dipartimenti di Matematica, Largo San Leonardo Murialdo \hfill
\indent 1-00146 Roma, Italy}
 \email{{\tt
verra@mat.unirom3.it}}
\thanks{Research of the first author partially supported by Sonderforschungsbereich  "Raum-Zeit-Materie".}

\maketitle

A well-established principle of Mumford  asserts that all  moduli spaces of curves of genus $g>2$ (with or without marked points or level structure), are varieties of general type, except a finite number of cases occurring for relatively small genus, when these varieties tend to be unirational, or at least uniruled, see \cite{HM}, \cite{EH1}, \cite{FL}, \cite{F3}, \cite{Log}, \cite{V} for illustrations of this fact. In all known cases, the transition from uniruledness to being of general type is quite sudden and until now no examples were known of naturally defined moduli spaces of curves of intermediate Kodaira dimension.
The aim of this paper is to discuss the very surprising birational geometry of special moduli spaces of curves, which in particular have intermediate Kodaira dimension.

The moduli space $\cS_g$ of smooth spin curves parameterizes pairs
$[C, \eta]$, where $[C]\in \cM_g$ is a curve of genus $g$ and
$\eta\in \mbox{Pic}^{g-1}(C)$ is a theta-characteristic. The map $\pi: \cS_g \rightarrow \cM_g$ is an \'etale covering of degree $2^{2g}$ and
$\cS_g$ is a disjoint union of two connected components $\cS_g^{+}$
and $\cS_g^{-}$ of relative degrees $2^{g-1}(2^g+1)$ and
$2^{g-1}(2^g-1)$ corresponding to even and odd theta-characteristics
respectively. We denote by $\ss_g$ the Cornalba compactification of $\cS_g$,  that is, the coarse moduli space of the stack of
stable spin curves of genus $g$, cf. \cite{C}. The projection $\pi: \cS_g \rightarrow \cM_g$ extends to a
finite covering $\pi:\ss_g\rightarrow \mm_g$ branched along the boundary divisor $\Delta_0$ of $\mm_g$.
It is known that $\ss_g^+$ is a variety of general type for $g>8$ and uniruled for $g<8$, cf. \cite{F3}. We show that the only remaining case, that of $\ss_8^+$,
gives rise to a variety of Calabi-Yau type:

\begin{theorem}\label{spin8}
The Kodaira dimension of $\ss_8^+$ is equal to zero.
\end{theorem}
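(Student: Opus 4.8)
The plan is to prove separately the two bounds $\kappa(\ss_8^+)\geq 0$ and $\kappa(\ss_8^+)\leq 0$. A preliminary reduction is needed: because the coarse space $\ss_8^+$ has finite quotient singularities and these are canonical --- including along the boundary strata $B_0$ and $\Delta_i$ where stable spin curves acquire automorphisms, which for $g=8$ can be read off the local model of the stack of Cornalba spin curves --- every pluricanonical form on the smooth locus extends to a resolution, so $\kappa(\ss_8^+)$ is governed by the growth of $h^0(\ss_8^+,mK_{\ss_8^+})$ and we may compute with $K_{\ss_8^+}\in\mathrm{Pic}(\ss_8^+)_{\QQ}$. Since $\pi\colon\ss_8\to\mm_8$ is simply ramified along $B_0$, with $\pi^{*}\delta_0=\alpha_0+2\beta_0$, $\pi^{*}\delta_i=\alpha_i+\beta_i$ for $i\geq 1$ and $\pi^{*}\lambda=\lambda$, Riemann--Hurwitz together with $K_{\mm_8}=13\lambda-2\delta$ gives
\[
K_{\ss_8^+}=13\lambda-2\alpha_0-3\beta_0-2\sum_{i\geq 1}(\alpha_i+\beta_i),
\]
up to the standard correction along $\Delta_1$.

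For $\kappa(\ss_8^+)\geq 0$, the guiding principle is that $g=8$ is precisely the value at which $K_{\ss_8^+}$ enters --- but does not penetrate the interior of --- the effective cone. I would exhibit $mK_{\ss_8^+}$ as effective by writing its class as a nonnegative $\QQ$-combination of geometrically meaningful effective divisors and of the boundary. Two such divisors are available: the theta-null divisor $\thet$, the closure in $\ss_8^+$ of the locus of $[C,\eta]\in\cS_8^+$ with $h^0(C,\eta)\geq 2$, whose class was computed by Farkas; and a Brill--Noether-type divisor built from the $14$ pencils $g^1_5$ carried by a general genus-$8$ curve. The latter should be cut out on the degree-$14$ cover $\AUX\to\ps$ by the determinant of a morphism of vector bundles of equal rank --- manufactured from the multiplication maps attached to a pencil $A$ and the spin bundle $\eta$, equivalently from the Casnati--Ekedahl/Pfaffian syzygies of the associated degree-$5$ cover $C\to\PP^1$ --- which one expects to be generically an isomorphism exactly when $g=8$; a test-curve and Grothendieck--Riemann--Roch calculation then expresses its pushforward to $\ss_8^+$ in $\lambda,\alpha_i,\beta_i$. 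The content of the claim is that at $g=8$ all the resulting coefficients are nonnegative: for $g\leq 7$ they are not (so $\ss_g^+$ is uniruled), while for $g\geq 9$ one obtains a big class (Farkas's general-type theorem quoted above).

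For $\kappa(\ss_8^+)\leq 0$, which is the crux, I would use the exceptional geometry of genus $8$ --- Mukai's description of a general genus-$8$ curve as a transverse linear section $C=G(2,6)\cap\PP^{7}$, again together with the $g^1_5$'s --- to produce a family $\{B_t\}$ of rational curves on $\ss_8^+$ such that $K_{\ss_8^+}\cdot B_t=0$, the $B_t$ cover $\ss_8^+$, and moreover two general points of $\ss_8^+$ are joined by a chain of members of the family. Such curves would arise as follows: over a general point of $\AUX$, the Casnati--Ekedahl structure theorem realizes $(C,\eta)$ inside a rational scroll $X$ (a projective bundle over $\PP^1$), with $\eta$ the restriction of a fixed sheaf on $X$ attached to the Pfaffian presentation of the degree-$5$ cover; a pencil of curves algebraically equivalent to $C$ inside $X$, equipped with the restricted sheaves, traces a rational curve $B$ in $\ss_8^{\mp}$, and letting $X$ and the pencil vary one obtains the family above. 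Granting this, the vanishing $K_{\ss_8^+}\cdot B_t=0$ propagates to any pluricanonical divisor: if $D\in|mK_{\ss_8^+}|$ then $D\cdot B_t=0$ for general $t$; a variety of Kodaira dimension $\geq 1$ admits, via a pencil in its Iitaka fibration, a dominant rational map to $\PP^1$ along which $mK$ differs by an effective divisor from the pullback of an ample class, and such a map is then forced to contract every $B_t$ --- which is impossible once the $B_t$ chain-connect $\ss_8^+$. Hence $h^0(\ss_8^+,mK_{\ss_8^+})\leq 1$, and with the previous step this equals $1$ for $m$ sufficiently divisible, so $\kappa(\ss_8^+)=0$.

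The main obstacle is the construction of the family $\{B_t\}$ together with the two facts $K_{\ss_8^+}\cdot B_t=0$ and chain-connectedness. The first requires a precise understanding of how the spin structure $\eta$ degenerates along each pencil, so as to control the intersection numbers with $\alpha_0,\beta_0$ and the $\alpha_i,\beta_i$; the second requires the auxiliary scrolls and linear systems to move in a family large enough to sweep out the $21$-dimensional variety $\ss_8^+$ in sufficiently many directions. This is exactly where the genus-$8$ Mukai and $g^1_5$ geometry is indispensable, and any slack in the numerology --- in particular in the effectivity computation of the second step, which rests on the same borderline arithmetic --- would destroy the conclusion.
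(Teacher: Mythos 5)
Your overall architecture (prove $\kappa\geq 0$ by an effective decomposition of $K_{\ss_8^+}$, then prove $\kappa\leq 0$ by curves that kill any movable part) matches the paper's, but both halves rest on constructions you do not carry out, and the second one is replaced by a strictly harder problem than what is actually needed. For $\kappa\geq 0$, the paper does not manufacture a new degeneracy-locus divisor from the fourteen $\mathfrak g^1_5$'s: it simply writes $K_{\ss_8^+}\equiv \frac{1}{2}\pi^*(\mathfrak{bn}_8)+8\thet+\sum_{i=1}^4(a_i\alpha_i+b_i\beta_i)$ with $a_i,b_i>0$, using only the known classes of $\thet$ and of the plane-septic divisor $\mm_{8,7}^2$ together with $\pi^*(\delta_i)=\alpha_i+\beta_i$. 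Your proposed Pfaffian/Casnati--Ekedahl divisor on the degree-$14$ cover is unverified (``which one expects to be generically an isomorphism exactly when $g=8$'' is the whole content of that step), and it is unnecessary.

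The serious gap is in $\kappa\leq 0$. You require a family of rational curves $B_t$ that \emph{covers and chain-connects all of} $\ss_8^+$ with $K_{\ss_8^+}\cdot B_t=0$; you correctly flag this as the main obstacle, and nothing in your sketch produces it -- note that a curve through a general point with $K\cdot B_t=0$ would have to meet $\thet$, $\pi^*(\mm_{8,7}^2)$ and all $\alpha_i,\beta_i$ ($i\geq1$) in degree exactly zero simultaneously, and the natural candidates (lifts of Lefschetz pencils on $K3$ surfaces) are multisections of $\pi$, not rational curves. The paper needs much less: for each irreducible component $D_i$ of the effective representative of $K$, a curve covering only $D_i$ with $\Gamma_i\cdot D_i<0$ and $\Gamma_i\cdot D_j=0$ for $j\neq i$. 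This is achieved by (a) Proposition \ref{pencil8}, a pencil of spin curves on a doubly-elliptic $K3$ surface sweeping $\thet$ with $R\cdot\thet=-1$, $R\cdot\pi^*(\mm_{8,7}^2)=0$ and $R$ disjoint from $A_i,B_i$ -- the construction hinges on lifting the rank-$3$ quadric through the canonical curve to a rank-$4$ quadric through a $K3$ extension inside $G(2,6)\subset\PP^{14}$ -- and (b) the lift of the Lefschetz pencil of $7$-nodal plane septics, which strips off $\pi^*(\mathfrak{bn}_8)$, leaving only rigid boundary classes. Without an actual construction of your family $\{B_t\}$ (or of the covering curves of each fixed component), the inequality $\kappa(\ss_8^+)\leq 0$ is not established.
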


We point out that the Kodaira dimension of the odd spin moduli space $\ss_g^-$ is known for all  genera $g$, cf.  \cite{FV}. Thus $\ss_g^-$ is uniruled for $g\leq 11$ (even unirational for $g\leq 9$), and of general type for $g\geq 12$. In particular, we observe the surprising phenomenon that $\ss_8^-$ is unirational, whereas $\ss_8^+$ is of Calabi-Yau type!
\vskip 4pt

The proof of Theorem \ref{spin8} relies on two main ideas: Following \cite{F3}, one finds an \emph{explicit} effective representative for the canonical divisor $K_{\ss_8^+}$ as a
$\mathbb Q$-combination of the divisor $\overline{\Theta}_{\mathrm{null}}\subset \ss_8^+$ of vanishing theta-nulls, the pull-back $\pi^*(\mm_{8, 7}^2)$ of the Brill-Noether divisor $\mm_{8, 7}^2$ on $\mm_8$ of curves with a $\mathfrak g^2_7$, and boundary divisor classes corresponding to spin curves whose underlying stable model is of compact type. Each irreducible component of this particular representative of $K_{\ss_8^+}$ is rigid (see Section 1). Then we use in an essential way the existence of a Mukai model of $\mm_8$ as a GIT quotient of a bundle over the Grassmannian $\GG:=G(2, 6)$ cf. \cite{M2}, in order to prove the following result:

\begin{proposition}\label{pencil8}
The uniruled divisor $\thet\subset \ss_8^+$ is swept by rational curves $R\subset \ss_8^+$ such that $R\cdot \thet=-1$ and $R\cdot \pi^*(\mm_{8, 7}^2)=0$. Furthermore $R$ is disjoint from all boundary divisors $A_i, B_i\subset \ss_8^+$ for $i=1, \ldots, 4$.
\end{proposition}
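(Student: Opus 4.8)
The plan is to produce the covering family of rational curves by deforming a general point of $\thet$ inside Mukai's model of $\mm_8$, and then to read off the three intersection numbers from the geometry of the corresponding pencil of curves.

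First I would fix the description of the general point. A general $[C,\eta]\in\thet$ is a smooth genus $8$ curve $C$ together with a base-point-free \emph{self-residual} pencil $\eta=\mathfrak g^1_7$, that is, $h^0(C,\eta)=2$ and $\eta^{\otimes 2}\cong\omega_C$; conversely such a pair determines a point of $\thet$ away from a subvariety of positive codimension, and $\pi|_{\thet}$ is birational onto the vanishing theta-null divisor $\cM_8^{\vartheta}:=\pi(\thet)\subset\mm_8$. Since the divisors $\cM_8^{\vartheta}$ and $\mm_{8,7}^2$ on $\mm_8$ are distinct, a general $[C]\in\cM_8^{\vartheta}$ lies outside the locus where Mukai's construction degenerates, hence carries the rank $2$ Mukai bundle and admits a Mukai embedding $C=\GG\cap\PP^7\subset\PP^{14}$, with $\GG=G(2,6)$ Plücker-embedded and $\PP^7=\langle C\rangle$ the canonical span.

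Next comes the construction of $R$. Inside the parameter space of Mukai's model --- the GIT quotient of a bundle over $\GG$ --- the preimage of $\cM_8^{\vartheta}$ is cut out by the requirement that the self-residual $\mathfrak g^1_7$ persist. I would first render this locus explicit as an incidence condition on the span $\PP^7$ relative to $\GG$ (equivalently, through the projection $\PP^7\dashrightarrow\PP^2$ associated to $\mathrm{Sym}^2 H^0(\eta)\subset H^0(\omega_C)$, the condition that $C$ map $7:1$ onto a conic), and then exhibit through a general such $\PP^7$ a pencil $\{\PP^7_t\}_{t\in\PP^1}$ of linear sections still satisfying it. This gives a flat family $\{(C_t,\eta_t)\}$ of stable spin curves over $\PP^1$, hence a rational curve $R\subset\ss_8^+$ meeting $\thet$; letting the base point of the Mukai embedding vary shows that these curves sweep out $\thet$. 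The members $C_t$ are linear sections of the smooth irreducible $\GG$, so for a general such pencil every $C_t$ is either smooth or irreducible one-nodal: no $C_t$ is of compact type, which already yields $R\cdot A_i=R\cdot B_i=0$ for all $i$, and no $C_t$ acquires a $\mathfrak g^2_7$ (such curves form the discriminant of the Mukai model, of codimension $\geq 1$, avoided by a general pencil inside $\cM_8^{\vartheta}$), whence $R\cdot\pi^*(\mm_{8,7}^2)=0$.

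It remains to compute $R\cdot\thet=-1$. Let $f:\mathcal C\to\PP^1$ be the total space of the pencil, a surface fibred over $\PP^1$ with the $C_t$ as fibres, obtained from the surface swept out inside $\GG$ by resolving the base locus of the pencil. From the numerical invariants of $f$ one reads off $\lambda\cdot R$, $\delta_0\cdot R$ and the vanishing of the remaining boundary degrees along $R$, and substituting these into the known expression for $[\thet]\in\mathrm{Pic}(\ss_8^+)$ --- the same expression that enters the formula for $K_{\ss_8^+}$ --- should yield $-1$. I expect the main obstacle to be exactly this last point together with the incidence description above: making the vanishing-theta-null locus explicit inside Mukai's model, producing a pencil that stays in it, passes through a general point, and degenerates only to irreducible nodal curves, and above all checking that $R\cdot\thet$ equals $-1$ on the nose --- pinning down the precise value and sign requires the explicit equations of the model and a careful analysis of the spin structures on the degenerate fibres in the Cornalba compactification.
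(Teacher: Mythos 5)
Your overall shape is right --- use Mukai's model $C=\GG\cap\PP^7$, produce a pencil of linear sections along which the vanishing theta-null persists, and read off $R\cdot\thet$ from the surface fibration and the class formula for $\thet$ --- but the step you defer as ``the main obstacle'' is precisely the content of the paper's proof, so the proposal has a genuine gap rather than an alternative route. The missing idea is the following: one does not look for an abstract pencil of $\PP^7_t$'s satisfying an incidence condition, but first \emph{lifts the rank $3$ quadric}. Using the isomorphism $H^0(\mathcal I_{\GG/\PP^{14}}(2))\cong H^0(\mathcal I_{C/\PP^7}(2))$ one shows the unique quadric $\tilde Q\supset\GG$ restricting to $Q$ is smooth, and then that there is a pencil of $8$-planes $\PP^7\subset\Lambda\subset\PP^{14}$ for which $S:=\GG\cap\Lambda$ is a smooth K3 surface and $Q_\Lambda:=\tilde Q\cap\Lambda$ has rank $4$ (the generic rank being $5$). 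The rank $4$ quadric projects $S$ seven-to-one onto a smooth quadric $Q_0\subset\PP^3$, exhibiting $S$ as a doubly elliptic K3 with $C\equiv C_1+C_2$, $C_1\cdot C_2=7$; the pencil $R$ is then the pullback to $S$ of a pencil of plane sections of $Q_0$ through a general line $l_0$. It is this fixed K3 extension that guarantees every member carries a theta-null (the rank $4$ quadric restricts to rank $3$ on each hyperplane $f_\Lambda^{-1}(\Pi_t)$) and that lets one sweep out $\thet$ by varying $[S]$ in the Noether--Lefschetz divisor $\mathfrak{NL}$.

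A second, consequential error: your claim that for a general such pencil every $C_t$ is irreducible is false for the pencil that actually works. The line $l_0$ meets $Q_0$ in two points, and the two planes spanned by rulings through those points give exactly two \emph{reducible} fibres, each a union of two elliptic curves meeting transversally in $7$ points. These still miss $A_i,B_i$ ($i\geq 1$), but they are essential to the arithmetic: after inserting $7$ exceptional components, the automorphism group $\mathbb Z_2$ of the resulting spin curve makes $B_0$ non-Cartier there, and each reducible fibre contributes $7/2$ to $R\cdot\beta_0$, giving $R\cdot\beta_0=7$, $R\cdot\alpha_0=52$, and hence $R\cdot\thet=\tfrac14\cdot 9-\tfrac1{16}\cdot 52=-1$. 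A pencil with only irreducible nodal fibres would distribute $\alpha_0$ and $\beta_0$ differently and would not yield $-1$; so the ``careful analysis of the spin structures on the degenerate fibres'' you postpone is not a refinement but the computation itself. (Your argument for $R\cdot\pi^*(\mm_{8,7}^2)=0$ should also be replaced by the slope computation: $\pi_*(R)$ has the invariants of a Lefschetz pencil on a K3, whose slope equals $s(\mm_{8,7}^2)=6+12/9$.)
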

The pencil $R$ corresponds to spin curves lying on special doubly elliptic $K3$ surfaces $S$, chosen in such a way that the rank $3$ quadric containing the underlying canonical curve $C\subset \PP^{7}$ corresponding to a general point $[C, \eta]\in \thet$, lifts to a rank $4$ quadric in $\PP^8$ containing the $K3$ surface $S\supset C$. The existence of such $K3$ extensions of $C$ follows from a precise description of  quadrics containing the Pl\"ucker embedding of the Grassmannian $\GG\subset \PP^{14}$  (see Sections 2 and 3). Proposition \ref{pencil8} implies that $K_{\ss_8^+}$ expressed as a weighted sum of $\thet$, the pull-back $ \pi^*(\mm_{8, 7}^2)$ and boundary divisors $A_i, B_i, i=1, \ldots, 4$, is rigid as well. Equivalently, $\kappa(\ss_8^+)=0$.
\vskip 3pt

Our next result concerns the moduli space $\mm_{g, n}$ of stable $n$-pointed curves of genus $g$. For a given genus $g\geq 2$, we define the numerical invariant
$$\zeta(g):=\mathrm{min}\{n\in \mathbb Z_{\geq 0}: \kappa(\mm_{g, n})\geq 0\}.$$
We think of $\zeta(g)$ as measuring the \emph{complexity} of the general curve of genus $g$. Since the relative dualizing sheaf of the forgetful map $\mm_{g, n}\rightarrow \mm_{g, n-1}$ is big, it follows that $\mm_{g, n}$ is of general type for $n>\zeta(g)$. Clearly $\zeta(g)=0$ for $g\geq 22$, cf. \cite{HM}, \cite{EH2}. There exist explicit upper bounds for $\zeta(g)$ for $4\leq g\leq 21$, see \cite{Log}, \cite{F2} Theorem 1.10. In particular, it is known
that $\mm_{10, n}$ is uniruled for $n\leq 9$, of general type for $n\geq 11$, whereas $\kappa(\mm_{10, 10})\geq 0$, cf. \cite{FP} Proposition 7.5. In other words, $\zeta(10)=10$. Similarly, it is known that $\mm_{11, n}$ is uniruled for $n\leq 10$ and of general type for $n\geq 12$. Until now, no example of a space $\mm_{g, n}$ ($g\geq 2$) having intermediate type was known.  Perhaps, the most picturesque finding of our study is the following:

\begin{theorem}\label{gen11}
The moduli space $\mm_{11, 11}$ has Kodaira dimension $19$.
\end{theorem}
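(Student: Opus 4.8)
The plan is to realise $\mm_{11,11}$, up to birational equivalence, as the total space of a fibration over the $19$-dimensional moduli space $\F_{11}$ of polarised $K3$ surfaces of genus $11$, whose general fibre is a product of $K3$ surfaces, and then to read off the Kodaira dimension from the canonical bundle formula of this fibration. Recall Mukai's theorem: a general $[C]\in\cM_{11}$ lies on a \emph{unique} polarised $K3$ surface $(S,L)$ with $L^2=20$ and $C\in|L|$, so that $[C]\mapsto[(S,L)]$ defines a dominant rational map $\cM_{11}\dashrightarrow\F_{11}$ with fibres birational to $|L|\cong\PP^{11}$. Composing with the forgetful morphism I obtain a dominant rational map
$$\phi\colon\mm_{11,11}\dashrightarrow\F_{11},\qquad [C,p_1,\dots,p_{11}]\longmapsto[(S,L)].$$

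The first step is to identify the fibres of $\phi$. The fibre over a general $[(S,L)]$ consists of the tuples $(C,p_1,\dots,p_{11})$ with $C\in|L|$ and $p_i\in C$; since $\dim|L|=11$, eleven general points of $S$ impose independent conditions and lie on a \emph{unique} member of $|L|$, so the projection $(C,p_1,\dots,p_{11})\mapsto(p_1,\dots,p_{11})$ is a birational isomorphism of this fibre with $S^{11}=S\times\cdots\times S$. This explains why $n=11$ is the critical number of marked points: for $n<11$ the corresponding fibre is a $\PP^{11-n}$-bundle over $S^{n}$, hence uniruled --- which recovers the uniruledness of $\mm_{11,n}$ for $n\le 10$ --- whereas for $n=11$ the positive-dimensional rational fibres disappear and $S^{11}$ has trivial canonical bundle.

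Next, passing to a level cover of $\F_{11}$ over which the universal $K3$ surface $p\colon\mathcal{X}\to\F_{11}$ exists, and to a smooth projective model, $\mm_{11,11}$ becomes birational to the $11$-fold fibre product $\mathcal{X}\times_{\F_{11}}\cdots\times_{\F_{11}}\mathcal{X}$. Since $\omega_{\mathcal{X}/\F_{11}}$ restricts to $\OO_S$ on each fibre it is the pull-back of the Hodge line bundle $\lambda_{\mathrm{K3}}:=p_*\omega_{\mathcal{X}/\F_{11}}$, so the relative canonical of the $11$-fold product is the pull-back of $\lambda_{\mathrm{K3}}^{\otimes 11}$, and therefore, on a suitable model,
$$K_{\mm_{11,11}}\equiv\phi^*\bigl(K_{\F_{11}}+11\lambda_{\mathrm{K3}}\bigr)+E,$$
with $E$ effective and supported over the boundary of $\mm_{11,11}$ and over the locus where Mukai's description degenerates. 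By Iitaka's easy addition theorem $\kappa(\mm_{11,11})\le\kappa(S^{11})+\dim\F_{11}=0+19$, while pushing pluricanonical forms forward along $\phi$ gives $h^0\bigl(\mm_{11,11},mK_{\mm_{11,11}}\bigr)\ge h^0\bigl(\F_{11},m(K_{\F_{11}}+11\lambda_{\mathrm{K3}})\bigr)$. Hence everything reduces to showing that $K_{\F_{11}}+11\lambda_{\mathrm{K3}}$ is \emph{big} on $\F_{11}$: then $\kappa(\mm_{11,11})=19$, and since $\dim\mm_{11,11}=41$ this exhibits a moduli space of intermediate type.

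The heart of the matter is this bigness. On a smooth toroidal model $\F_{11}$ is an orthogonal modular variety with $K_{\F_{11}}\equiv 19\lambda_{\mathrm{K3}}-\Delta-\tfrac12 R$ ($\Delta$ the boundary, $R$ the ramification of the period map), so that $K_{\F_{11}}+11\lambda_{\mathrm{K3}}\equiv 30\lambda_{\mathrm{K3}}-\Delta-\tfrac12 R$; since $\lambda_{\mathrm{K3}}$ is big and nef and $\Delta+\tfrac12 R$ has bounded $\lambda_{\mathrm{K3}}$-slope, the coefficient $30$ --- comfortably exceeding $19$ --- places this class in the interior of the big cone. This is presumably carried out, exactly as the genus-$8$ case uses the $G(2,6)$ model, via Mukai's explicit rational model of $\F_{11}$, which gives direct control of $\lambda_{\mathrm{K3}}$, $\Delta$ and $R$. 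Two points are delicate. The ``$11\lambda_{\mathrm{K3}}$''-twist is indispensable: $\F_{11}$ is itself unirational, so $K_{\F_{11}}$ alone has Kodaira dimension $-\infty$, and it is precisely the appearance of the $K3$ Hodge line bundle with coefficient $11$ (the dimension of $|L|$) that pushes $\mm_{11,11}$ past the uniruled threshold. Secondly, one must control the correction term $E$, i.e.\ verify that the boundary divisors $\delta_{\mathrm{irr}},\delta_{i,S}\subset\mm_{11,11}$ together with the Brill--Noether and other non-$K3$ loci contribute \emph{positively} relative to $\phi^*(K_{\F_{11}}+11\lambda_{\mathrm{K3}})$ rather than forcing the subtraction of an effective divisor; this is the Logan/Harris--Mumford-type bookkeeping with $\lambda$, the $\psi_i$ and the boundary classes, in practice packaged into an explicit effective representative of $K_{\mm_{11,11}}$ as in the genus-$8$ argument.
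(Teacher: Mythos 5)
Your strategy --- realise $\mm_{11,11}$ as a fibration over $\F_{11}$ with $S^{11}$-fibres, apply easy addition for the upper bound, and read the lower bound from a canonical-bundle formula --- is a genuinely different, more structural route than the paper's. The paper never works on $\F_{11}$ directly: it writes $K_{\mm_{11,11}}$ as an explicit effective combination $\dd_{11}+2\phi^*(\mathfrak{bn}_{11})+\sum d_{i:c}\delta_{i:c}$, strips off the fixed part $\dd_{11}$ and the $\delta_{0:c}$ terms by producing covering curves $R\subset\dd_{11}$ and $R_T\subset\Delta_{0:T}$ with the right negative intersection numbers, and then squeezes $\kappa(\mm_{11,11},A')$ between $\kappa(\mm_{11},B)$ and $\kappa(\mm_{11},B')$, both equal to $19$. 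The lower bound $\kappa(\mm_{11},B)=19$ is imported from \cite{FP} Proposition 6.2, which shows $B=\mathfrak{bn}_{11}+4\delta_3+7\delta_4+8\delta_5$ contains the pull-back of an ample class on $\ff_{11}$. Your ``easy addition'' upper bound is arguably cleaner than the paper's test-curve argument, and the observation that for $n\le 10$ the fibre becomes a $\PP^{11-n}$-bundle over $S^n$ (hence uniruled) is an attractive conceptual explanation of the threshold at $n=11$.

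However, the two points you flag as ``delicate'' are genuine gaps, and they correspond exactly to the hard work the paper actually does. First, the effectivity of $E$ in $K_{\mm_{11,11}}\equiv\phi^*(K_{\F_{11}}+11\lambda_{\mathrm{K3}})+E$ is asserted, not proved; the paper's Proposition 4.1 (rigidity of $\dd_{11}$ via a covering pencil with $R\cdot\dd_{11}=-1$), Proposition 4.2 (fixed component of $|nK|$), and the $R_T$-curve analysis in Theorem 4.3 are precisely what controls this, and none of it is routine ``Logan-type bookkeeping.'' Second, and more seriously, the bigness of $K_{\F_{11}}+11\lambda_{\mathrm{K3}}$ is the crux of your lower bound, and the argument offered --- that $30\lambda-\Delta-\tfrac12 R$ is big because $\Delta+\tfrac12 R$ ``has bounded $\lambda$-slope'' --- is not a valid inference; bounded slope does not place a class in the interior of the big cone, and establishing positivity of $a\lambda-\Delta-\tfrac12 R$ on orthogonal modular varieties is exactly the hard problem addressed by Gritsenko--Hulek--Sankaran, requiring the construction of specific low-weight cusp forms. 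The paper sidesteps this entirely: the cited \cite{FP} Proposition 6.2 does the bigness computation on $\mm_{11}$ using explicit Picard classes and the Mukai map, not on any toroidal compactification of $\F_{11}$. So while the geometric picture you describe is correct and closely mirrors the paper's remarks after Theorem 1.3, the lower-bound half of your argument rests on two unproved claims, and the second one has no obvious elementary justification.
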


Note that $\mbox{dim}(\mm_{11, 11})=41$. In particular, Theorem \ref{gen11} determines the value $\zeta(11)=11$, hence $\zeta(11)> \zeta(10)$. This explains, in precise terms, that counter-intuitively, algebraic curves of genus $10$ are more complicated than  curves of genus $11$!
\vskip 2pt

The equality  $\kappa(\mm_{11, 11})=19$ is related to the existence of the Mukai  fibration
$$q_{11}: \mm_{11, 11}\dashrightarrow \ff_{11},$$ over the $19$-dimensional moduli space $\ff_{11}$ of polarized $K3$ surfaces of degree $20$. The map $q_{11}$ associates to a general element $[C, x_1, \ldots, x_{11}]\in \mm_{11, 11}$ the unique $K3$ surface $S$ containing $C$, see \cite{M3}. According to Mukai,  $S$ is precisely the "dual" $K3$ surface to the non-abelian Brill-Noether locus corresponding to vector bundles of rank $2$
 $$S^{\vee}=SU_C(2, K_C, 6):=\{E\in SU_C(2, K_C): h^0(C, E)\geq 7\}.$$

 An analysis of the fibration $q_{11}$ shows that, (i) the divisor $n\dd_{11}$ is a fixed component of the pluri-canonical linear series $|nK_{\mm_{11, 11}}|$ for all $n\geq 1$, and (ii) the difference $K_{\mm_{11, 11}}-\dd_{11}$ is essentially the pull-back of an ample class on $\ff_{11}$.

\vskip 4pt

The proof of Theorem  \ref{gen11} is similar in spirit to the proof of Theorem \ref{spin8}. An important role is played by the effective divisor
$$\cD_g:=\{[C, x_1, \ldots, x_g]\in \cM_{g, g}: h^0\bigl(C, \OO_C(x_1+\cdots+x_g)\bigr)\geq 2\}.$$
The  class of the closure of $\cD_g$ inside $\mm_{g, g}$ is the following, cf. \cite{Log} Theorem 5.4:
$$\dd_g\equiv -\lambda+\sum_{i=1}^g \psi_i-0\cdot \delta_{\mathrm{irr}}-\sum_{i=0}^{[g/2]} \sum_{T\subset \{1, \ldots, g\}} {|\#(T)-i|+1\choose 2}\delta_{i: T}\in \mbox{Pic}(\mm_{g, g}).$$
Using \cite{FP}, as well as the expression of $K_{\mm_{g, n}}$ in terms of generators of $\mbox{Pic}(\mm_{g, n})$, one finds an explicit  representative of $K_{\mm_{11, 11}}$ as an effective combination of the pull-back to $\mm_{11, 11}$ of the $6$-gonal  divisor $\mm_{11, 6}^1$  on $\mm_{11}$, the divisor $\dd_{11}$, and certain boundary classes $\delta_{i:S}$. We then construct explicit curves $R\subset \mm_{11, 11}$ passing through a general point of $\dd_{11}$, such that $-R\cdot \dd_{11}>0$ equals precisely the multiplicity of $\dd_{11}$ in the above mentioned expression of $K_{\mm_{11, 11}}$. More generally, we show the following:
 \begin{theorem} For $g\leq 11$, the effective divisor $\dd_g\in \mathrm{Eff}(\mm_g)$ is extremal and rigid.
 \end{theorem}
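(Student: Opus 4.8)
The strategy rests on the familiar principle that an irreducible effective divisor swept out by a family of irreducible curves of negative intersection with it necessarily spans an extremal ray of the effective cone and is rigid. Thus I would exhibit a family $\{R\}$ of irreducible curves covering $\dd_g\subset\mm_{g,g}$ with $R\cdot\dd_g<0$. Given such a family, for any effective $\QQ$-divisor $D\equiv m\,\dd_g$ the general member satisfies $D\cdot R<0$, so $R\subset\mathrm{Supp}(D)$ for all such $R$, whence $\dd_g\subseteq\mathrm{Supp}(D)$; writing $D=c\,\dd_g+D'$ with $c>0$ and $D'\ge 0$ not containing $\dd_g$, the same argument applied to $D'\equiv(m-c)\,\dd_g$ forces, after finitely many steps, $D=m\,\dd_g$. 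In particular $h^0(\mm_{g,g},\OO(m\,\dd_g))=1$ for all $m\ge 0$, and no splitting $\dd_g\equiv D_1+D_2$ into effective classes other than multiples of $\dd_g$ can occur; so $\dd_g$ is extremal and rigid.

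The curves are produced from pencils of degree $g$. A general $[C]\in\mm_g$ carries a base-point-free pencil $L\in W^1_g(C)$, since $\rho(g,1,g)=g-2\ge 0$ and the pencils with an assigned base point move in a strictly smaller family; the induced morphism $\phi=\phi_{|L|}\colon C\to\PP^1$ has degree $g$, is simply branched over $b=4g-2$ points by Riemann--Hurwitz, and, for a general pair $(C,L)$, has monodromy group the full symmetric group $S_g$. Let $R$ be the normalization of the component of the $g$-fold fibre product $C\times_{\PP^1}\cdots\times_{\PP^1}C$ along which the $g$ coordinates are generically distinct; transitivity of the $S_g$-monodromy on the $g!$ orderings of a general fibre makes $R$ irreducible. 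Its $g$ tautological projections to $C$, after stabilizing — a rational tail sprouts wherever two of the sections cross — define a morphism $R\to\mm_{g,g}$ whose general point has its marked points summing to a fibre of $\phi$, hence a member of $|L|$; so $R\subset\dd_g$. As $(C,L)$ varies over its $(4g-5)$-dimensional parameter space the curves $R(C,L)$ sweep out $\dd_g$: a general $[C,x_1,\dots,x_g]\in\dd_g$ has $x_1+\cdots+x_g$ a reduced member of the unique pencil $|\OO_C(x_1+\cdots+x_g)|$, so it lies on $R\bigl(C,\OO_C(\textstyle\sum x_i)\bigr)$, and indeed $\dim\{(C,L)\}+1=4g-4=\dim\dd_g$.

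It remains to compute $R\cdot\dd_g$. The genus-$g$ curve $C$ stays constant along $R$, so $\lambda\cdot R=0$ and $\delta_{\mathrm{irr}}\cdot R=0$, while the only boundary divisors met by $R$ are the $\delta_{0:\{j,k\}}$, produced by the collisions of the $j$-th and $k$-th sections over the $4g-2$ simple branch points of $\phi$. A local analysis of the fibre product and of the universal curve at such a branch point gives, for every pair, $\delta_{0:\{j,k\}}\cdot R=(4g-2)(g-2)!$ and
$$\psi_i\cdot R=\deg\bigl(\sigma_i^*\omega_{\cC/R}\bigr)=(2g-2)(g-1)!+(g-1)(4g-2)(g-2)!$$
on the blown-up stable family $\cC\to R$. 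Substituting these values, together with the coefficient $-\binom{|2-0|+1}{2}=-3$ of $\delta_{0:\{j,k\}}$, into the expression for $\dd_g$ recalled above, and using $g(g-1)(g-2)!=g!$, one obtains
$$R\cdot\dd_g=g!\,(6g-4)-3(2g-1)\,g!=-g!<0,$$
as required; this holds for every $g\ge 2$, so in particular for $g\le 11$.

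The main obstacle is precisely this last computation: obtaining the values $\psi_i\cdot R$ and $\delta_{0:\{j,k\}}\cdot R$ requires resolving the $g$-fold fibre product and the universal curve near each of the $4g-2$ simple branch points of $\phi$ and tracking carefully how the sections $\sigma_i$ and the $\psi$-classes transform under the blow-ups that carry out stabilization; it is the numerical cancellation $(6g-4)-3(2g-1)=-1$ that produces the clean value $-g!$. By comparison, the irreducibility of $R$ (the full-symmetric-monodromy input) and the dominance of the family onto $\dd_g$ (the dimension count plus uniqueness of the pencil through a general point) are routine, but cannot be omitted.
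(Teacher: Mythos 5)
Your argument is correct and genuinely different from the paper's. The paper produces a covering curve for $\dd_g$ by varying the curve $C$ itself: it places a general $[C,x_1,\ldots,x_g]\in\cD_g$ on a $K3$ surface $S\subset\PP^g$ and takes the pencil of hyperplane sections of $S$ through the fixed linear span $\Lambda=\langle x_1,\ldots,x_g\rangle$, obtaining $R\cdot\lambda=g+1$, $R\cdot\delta_{\mathrm{irr}}=6g+18$, $R\cdot\psi_i=1$, $R\cdot\delta_{i:T}=0$ and hence $R\cdot\dd_g=-1$; the reliance on $K3$s is why the statement is restricted to $g\leq 11$ and why $g=10$ needs a separate construction involving nodal $K3$ sections and a degree-$2$ base change. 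You instead fix $C$, take a general base-point-free $L\in W^1_g(C)$ with full $S_g$ monodromy, and let the ordered fibre of $|L|$ sweep out an irreducible curve $R$ contracted by the forgetful map $\phi\colon\mm_{g,g}\to\mm_g$, so that $\lambda\cdot R=\delta_{\mathrm{irr}}\cdot R=0$, the only boundary met is $\delta_{0:\{j,k\}}$, and the computation gives $R\cdot\dd_g=-g!$. I checked the local analysis you flagged as the obstacle: on $C\times R$ the graphs $\sigma_j$, $\sigma_k$ meet transversely at the $(4g-2)(g-2)!$ points where positions $j,k$ carry the ramified sheet, each blow-up on $\sigma_i$ contributes $+1$ to $\psi_i\cdot R$ via $\omega_{\widetilde{\cC}/R}=p^*\omega_{C\times R/R}\otimes\OO(E)$, the first-coordinate map $R\to C$ has degree $(g-1)!$, and there are no triple collisions; your values and the cancellation $(6g-4)-3(2g-1)=-1$ are right. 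Each method has a separate advantage: yours is more elementary (no Mukai/$K3$ input), stays inside a fibre of $\phi$, and works for every $g\geq 2$, giving the theorem in a range beyond what the paper states; the paper's $K3$ curve has the stronger property $R\cdot\delta_{i:T}=0$ for \emph{all} $i,T$, which is exploited later (in the $\kappa(\mm_{11,11})=19$ argument) in a way your $R$, which meets $\delta_{0:\{j,k\}}$ positively, could not replace.
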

 In genus $11$, using the existence of the above mentioned Mukai fibration, this eventually leads to the equality $\kappa(\mm_{11, 11})=\kappa(\mm_{11}, \mm_{11, 6}^1)=19$, where the last symbol stands for the Iitaka dimension of the linear system $|\mm_{11, 6}^1|$ generated by the Brill-Noether divisors on $\mm_{11}$.

In the final section of this paper we study the uniruledness of $\mm_{g, n}$ when $g\leq 8$.
\begin{theorem}\label{genul8}

\noindent \begin{enumerate}
\item $\mm_{5, n}$ is uniruled for $n\leq 14$ and of general type for $n\geq 15$. In particular, $\zeta(5)=15$.
\item $\mm_{7, n}$ is uniruled for $n\leq 13$, of general type for $n\geq 15$, while $\kappa(\mm_{7, 14})\geq 0$. In particular,
$\zeta(7)=14$.
\item $\mm_{8, n}$ is uniruled for $n\leq 12$ and  of general type for $n\geq 14$. Thus $\zeta(8)\in \{13, 14\}$.
\end{enumerate}
\end{theorem}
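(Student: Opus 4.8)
The plan is to establish Theorem~\ref{genul8} by combining uniruledness constructions on one side with general-type results on the other, for each of the genera $g=5,7,8$ separately.

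\textbf{Uniruledness (lower bounds for the general-type threshold).} For a fixed $g\in\{5,7,8\}$, the strategy is to exhibit, through a general point $[C,x_1,\ldots,x_n]\in\cM_{g,n}$, a rational curve moving in a family that dominates $\cM_{g,n}$. The natural source of such rational curves is the fact that a general curve $C$ of genus $g\leq 8$ lies on a suitable projective model — a del Pezzo surface, a K3 surface, or a quadric/scroll — on which one can vary the configuration of marked points along a pencil, or vary $C$ itself inside a linear system on a fixed surface. For $g=5$, a general curve is a complete intersection of three quadrics in $\PP^4$; fixing two of the quadrics gives a pencil of quintic genus-$5$ curves on the resulting del Pezzo threefold, and marked points can be swept along the base $\PP^1$, yielding uniruledness as long as the pencil has enough freedom to absorb $n\leq 14$ marked points — the count $14=\dim\cM_{5,n}-\dim\cM_5-1$-type bookkeeping is what pins down the sharp bound. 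For $g=7$ and $g=8$, one uses the Mukai models: a general genus-$7$ curve is a transverse linear section of the orthogonal Grassmannian (spinor tenfold), and a general genus-$8$ curve is a transverse linear section of $G(2,6)\subset\PP^{14}$, exactly the model invoked for Theorem~\ref{spin8}. Varying the linear section inside a pencil, while marking points on a fixed ambient K3 or Fano slice, produces the rational curves; the admissible $n$ is governed by the dimension of the space of such pencils minus the constraints imposed by the marked points.

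\textbf{General type (upper bounds).} For the complementary range one invokes the slope-type machinery: one writes $K_{\mm_{g,n}}$ in terms of $\lambda,\psi_i,\delta_{\mathrm{irr}},\delta_{i:S}$ and exhibits it as a positive combination of an ample class, boundary, and an effective divisor pulled back from $\mm_g$ or from a Brill--Noether/Koszul-type divisor on $\mm_{g,n}$. Concretely, using the divisor $\cD_g$ and its class $\dd_g$ recalled above (for $g=5$ and $g=7$ the relevant pointed divisors are the pullbacks of Brill--Noether divisors on $\mm_g$ together with $\dd_g$ and the Weierstrass-type or $\psi$-classes), together with Logan's effectivity results \cite{Log} and the calculations of \cite{FP}, one checks the numerical inequality that forces $K_{\mm_{g,n}}$ to be big for $n\geq 15$ (resp. $n\geq 15$, resp. $n\geq 14$). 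The statement $\kappa(\mm_{7,14})\geq 0$ is proved by producing a single explicit effective pluricanonical divisor: one finds a representative of $K_{\mm_{7,14}}$ (up to a positive multiple) as an effective sum of $\pi^*(\mm_{7,4}^1)$, the divisor $\dd_7$, and boundary terms — exactly the same shape of argument as in Theorem~\ref{gen11}, but without the additional input needed to compute the Iitaka dimension exactly.

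\textbf{Main obstacle.} The genuinely delicate point is the sharp uniruledness bound, i.e. squeezing the maximal $n$ for which $\mm_{g,n}$ is uniruled, because one needs the family of rational curves to \emph{dominate} $\cM_{g,n}$ rather than merely sweep a divisor; this requires a careful dimension count on the parameter space of (surface or Fano slice, curve in a linear system, pencil of marked-point configurations), and verifying that the generic such rational curve is free, or at least that the family has the expected dimension $\dim\mm_{g,n}$. For $g=8$ this is why the theorem only yields $\zeta(8)\in\{13,14\}$: the construction reaches $n=12$, the general-type argument reaches $n=14$, and the borderline case $n=13$ — whether $\mm_{8,13}$ is uniruled or already has $\kappa\geq 0$ — is left open, presumably because neither the K3-pencil construction nor the effective-divisor estimate is tight enough there. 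I would therefore spend most of the effort on optimizing the rational-curve families via the Mukai model of $\mm_8$, and on the boundary-intersection bookkeeping needed to confirm that the constructed $R\subset\mm_{g,n}$ meet the pulled-back Brill--Noether divisor and the relevant $\delta_{i:S}$ with the correct signs.
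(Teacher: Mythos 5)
The proposal correctly identifies the two-pronged structure (new uniruledness constructions on one side, citing Logan and \cite{F2} for general type on the other) and correctly observes that the general-type ranges are not reproved here. But there is a genuine gap in your uniruledness strategy: you set yourself the task of constructing a family of rational curves that \emph{dominates} $\cM_{g,n}$ with $R\cdot K_{\mm_{g,n}}<0$, and you then correctly flag this as the ``genuinely delicate point'' -- yet you never explain how to overcome it, and indeed for $g=7,8$ it would be very hard. The paper's key move, which your proposal omits, is Proposition~\ref{uniruled}: one does \emph{not} need a single covering family of $\cM_{g,n}$. Instead one takes two irreducible divisors $D_1, D_2$, exhibits a covering curve $\Gamma_i$ inside each $D_i$ with $\Gamma_i\cdot D_i<0$, and then a $2\times 2$ determinant inequality on the numbers $\Gamma_i\cdot D_j$ and $\Gamma_i\cdot K_X$ forces $K_X\notin\overline{\mathrm{Eff}}(X)$, hence uniruledness via \cite{BDPP}. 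This is strictly weaker to verify than producing a dominating free rational curve, and it is precisely how the paper handles $g=7$ (with $D_1=\dd_1$ the ``node of a plane septic model'' divisor and $D_2=\phi^*(\mathfrak{bn}_7)$) and $g=8$ (with $D_1=\mathfrak{bn}_8$, $D_2=\Delta_{\mathrm{irr}}$). Without this criterion your plan would stall exactly at the obstacle you identify.

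The concrete constructions also differ. For $g=5$ the paper does not use a pencil of $(2,2,2)$ complete intersections in a del Pezzo threefold; it uses a pencil of $5$-nodal plane sextics, blown up at nodes and base points, combined with base changes of order $2$ along a node and order $6$ along a line to generate the $14$ marked-point sections, and then checks $U\cdot K_{\mm_{5,14}}=-2$ directly. For $g=7$ and $g=8$ the constructions are again pencils of nodal plane septics (with assigned nodes, a triple point in the $g=7$ gonality case, and multi-sections resolved by base change), not linear sections of the orthogonal Grassmannian or $G(2,6)$; the Mukai model of $\mm_8$ is used elsewhere in the paper (for Theorem~\ref{spin8}) but plays no role in this section. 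Finally, your sketch of proving $\kappa(\mm_{7,14})\geq 0$ by exhibiting an effective pluricanonical representative built from $\dd_7$ and $\phi^*(\mm^1_{7,4})$ is a plausible idea, but the paper does not carry it out -- that part of the statement, like the general-type ranges, is imported from the literature.
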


New here is the statement about the uniruledness of $\mm_{g, n}$. For the sake of completeness, we have copied from \cite{Log} and \cite{F2} Theorem 1.10, the range in which $\mm_{g, n}$ was known to be of general type when $g=5, 7, 8$. The problem of determining $\zeta(g)$ is thus completely solved for $g\leq 7$. In order to prove Theorem \ref{genul8}, it suffices to establish that $K_{\mm_{g, n}}$ is not pseudo-effective. This is carried out by exhibiting \emph{two extremal uniruled} divisors on $\mm_{g, n}$, satisfying certain numerical properties, cf. Proposition \ref{uniruled}. This simple uniruledness principle seems to be of particular use when studying the birational geometry of moduli spaces, where one typically has an ample supply of explicit extremal effective divisors.

\section{Spin curves and the divisor $\thet$}

We begin by setting notation and terminology. If $\bf{M}$ is a Deligne-Mumford stack, we denote by $\cM$ its associated coarse moduli space. Let $X$ be a complex $\mathbb Q$-factorial variety. A $\mathbb Q$-Weil divisor $D$ on $X$ is said to be \emph{movable} if $\mbox{codim}\bigl(\bigcap_{m} \mbox{Bs}|mD|, X\bigr)\geq 2$, where the intersection is taken over all $m$ which are sufficiently large and divisible. We say that $D$ is \emph{rigid} if $|mD|=\{mD\}$, for all $m\geq 1$ such that $mD$ is an integral Cartier divisor. The \emph{Kodaira-Iitaka dimension} of a divisor $D$ on $X$ is denoted by $\kappa(X, D)$. As usual, we set $\kappa(X):=\kappa(X, K_X)$.

If $D=m_1D_1+\cdots +m_sD_s$ is an effective $\mathbb Q$-divisor on $X$, with irreducible components $D_i\subset X$  and $m_i> 0$ for $i=1, \ldots, s$, a (trivial) way of showing
that $\kappa(X, D)=0$ is by exhibiting for each $1\leq i\leq s$, a curve $\Gamma_i\subset X$ passing through a general point of $D_i$, such that $\Gamma_i\cdot D_i<0$ and $\Gamma_i\cdot D_j=0$ for $i\neq j$.

\vskip 4pt

We recall basic facts about the moduli space $\ss_g^+$ of even spin curves of genus $g$, see \cite{C}, \cite{F3} for details.
An \emph{even spin curve} of genus
$g$ consists of a triple $(X, \eta, \beta)$, where $X$ is a genus
$g$ quasi-stable curve, $\eta\in \mathrm{Pic}^{g-1}(X)$ is a line
bundle of degree $g-1$ such that $\eta_{E}=\OO_E(1)$ for every
rational component $E\subset X$ such that $\#(E\cap (\overline{X-E}))=2$ (such a component is called \emph{exceptional}), and $h^0(X, \eta)\equiv 0
\mbox{ mod } 2$, and finally,  $\beta:\eta^{\otimes
2}\rightarrow \omega_X$ is a sheaf homomorphism which is generically
non-zero along each non-exceptional component of $X$. Even spin curves of genus $g$ form a smooth Deligne-Mumford stack $\pi:\overline{\bf{S}}_g^+\rightarrow \overline{\bf{M}}_g$. At the level of coarse moduli schemes, the morphism $\pi:\ss_g^+\rightarrow \mm_g$ is the stabilization map
$\pi([X, \eta, \beta]):=[\mathrm{st}(X)]$, which associates to a quasi-stable curve its stable model.

We explain the boundary structure of $\ss_g^+$: If $[X, \eta, \beta]\in \pi^{-1}([C\cup_y D])$,
where $[C, y]\in \cM_{i, 1}, [D, y]\in \cM_{g-i, 1}$ and $1\leq i\leq [g/2]$, then necessarily
$X=C\cup_{y_1} E\cup_{y_2} D$, where $E$ is an exceptional
component such that $C\cap E=\{y_1\}$ and $D\cap E=\{y_2\}$.
Moreover $\eta=\bigl(\eta_C, \eta_D, \eta_E=\OO_E(1)\bigr)\in
\mbox{Pic}^{g-1}(X)$, where $\eta_C^{\otimes 2}=K_C, \eta_D^{\otimes
2}=K_D$. The condition $h^0(X, \eta)\equiv 0 \mbox{ mod } 2$,
implies that the theta-characteristics $\eta_C$ and $\eta_D$ have
the same parity. We denote by $A_i\subset \ss_g^+$ the closure of
the locus corresponding to pairs $$([C,y, \eta_C], [D, y, \eta_D])\in
\cS_{i, 1}^+\times \cS_{g-i, 1}^+$$ and by $B_i\subset \ss_g^+$ the
closure of the locus corresponding to pairs $$([C, y, \eta_C], [D, y,
\eta_D])\in \cS_{i, 1}^-\times \cS_{g-i, 1}^{-}.$$

We set
$\alpha_i:=[A_i]\in \mathrm{Pic}(\ss_g^+), \beta_i:=[B_i]\in
\mathrm{Pic}(\ss_g^+)$, and then one has that \begin{equation}\pi^*(\delta_i)=\alpha_i+\beta_i.
\end{equation}

We recall the description of the ramification divisor of the covering $\pi:\ss_g^+\rightarrow \mm_g$. For a point $[X, \eta, \beta]\in \ss_g^+$ corresponding to a stable model $\mbox{st}(X)=C_{yq}:=C/ y\sim q$,
with $[C, y, q]\in \cM_{g-1, 2}$, there are two possibilities
depending on whether $X$ possesses an exceptional component or not.
If $X=C_{yq}$ (i.e. $X$ has no exceptional component) and $\eta_C:=\nu^*(\eta)$ where $\nu:C\rightarrow X$
denotes the normalization map, then $\eta_C^{\otimes 2}=K_C(y+q)$.
For each choice of $\eta_C\in \mathrm{Pic}^{g-1}(C)$ as above, there
is precisely one choice of gluing the fibres $\eta_C(y)$ and
$\eta_C(q)$ such that $h^0(X, \eta) \equiv 0 \mbox{ mod } 2$. We denote by $A_0$ the
closure in $\ss_g^+$ of the locus of spin curves $[C_{yq}, \eta_C\in
\sqrt{K_C(y+q)}]$ as above.

If $X=C\cup_{\{y, q\}} E$, where $E$ is an exceptional component,
then $\eta_C:=\eta\otimes \OO_C$ is a theta-characteristic on $C$.
Since $H^0(X, \omega)\cong H^0(C, \omega_C)$, it follows that $[C,
\eta_C]\in \cS_{g-1}^{+}$.  We denote by
$B_0\subset \ss_g^+$ the closure of the locus of spin curves
$$\bigl[C\cup_{\{y, q\}} E, \ E\cong \PP^1, \ \eta_C\in \sqrt{K_C}, \ \eta_E=\OO_E(1)\bigr]\in \cS_g^+.$$ If
$\alpha_0:=[A_0], \beta_0:=[B_0]\in
\mbox{Pic}(\ss_g^+)$,  we have the relation, see \cite{C}:
\begin{equation}\label{del0}
\pi^*(\delta_0)=\alpha_0+2\beta_0.
\end{equation} In particular, $B_0$ is the ramification divisor of $\pi$.
An important  effective divisor on $\ss_g^{+}$ is the locus of vanishing theta-nulls
$$\Theta_{\mathrm{null}}:=\{[C, \eta]\in \cS_g^{+}: H^0(C, \eta)\neq
0\}.$$ The class of its compactification inside $\ss_g^+$
is given by the formula, cf.
\cite{F3}:
\begin{equation}\label{thetanull}
\overline{\Theta}_{\mathrm{null}}\equiv
\frac{1}{4}\lambda-\frac{1}{16}\alpha_0-\frac{1}{2}\sum_{i=1}^{[g/2]}
 \beta_i\in \   \mathrm{Pic}(\ss_g^{+}).
 \end{equation}
 It is also useful to recall the formula for the canonical class of $\ss_g^+$:
$$K_{\ss_g^{+}}\equiv\pi^*(K_{\mm_g})+\beta_0 \equiv
13\lambda-2\alpha_0-3\beta_0-2\sum_{i=1}^{[g/2]}
(\alpha_i+\beta_i)-(\alpha_1+\beta_1).$$

An argument involving spin curves on certain singular canonical surfaces in $\PP^6$, implies that for $g\leq 9$, the divisor $\thet$ is uniruled and a rigid point in the cone of effective
divisors $\mathrm{Eff}(\ss_g^+)$:

\begin{theorem}\label{extremalthetanull}
For $g\leq 9$ the divisor $\thet\subset \ss_g^+$ is uniruled and
rigid. Precisely, through a general point of $\thet$ there passes a rational curve $\Gamma \subset \ss_g^+$ such that $\Gamma \cdot \thet<0$. In particular, if  $D$ is an effective divisor on $\ss_g^+$ with $D\equiv n\thet$ for some $n\geq 1$, then $D=n\thet$.
\end{theorem}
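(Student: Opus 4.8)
The plan is to realise $\thet$ as swept out by rational pencils of canonical curves lying on $K3$ surfaces that are carried by a quadric of small rank. The starting observation is that a general $[C,\eta]\in\thet$ has $h^0(C,\eta)=2$, so the pencil $|\eta|$ displays the canonical curve $C\subset\PP^{g-1}$ on the cone over a conic --- equivalently, on a quadric $Q_C$ of rank $3$ with vertex a $\PP^{g-4}$; conversely, a smooth canonical curve lying on such a cone carries a distinguished vanishing theta-null, cut out by the ruling of the cone. For $g\le 9$ --- and this is the only point where the bound on $g$ intervenes --- one places $C$ on a polarised $K3$ surface $S\subset\PP^g$ of genus $g$, possibly with nodes along the vertex of a quadric. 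Since the restriction map $H^0(\PP^g,\I_S(2))\to H^0(\PP^{g-1},\I_C(2))$ is an isomorphism, $Q_C$ lifts to a unique quadric $Q\supset S$ in $\PP^g$, and because $Q|_{H_0}=Q_C$ has rank $3$ with $H_0$ a hyperplane, $\mathrm{rank}(Q)\le 5$; after a suitable further specialisation of $S$ one may even arrange $\mathrm{rank}(Q)=4$, exactly as in the construction behind Proposition \ref{pencil8}.

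Next I would build the pencil. Choose a pencil of hyperplanes $\{H_t\}_{t\in\PP^1}$ of $\PP^g$ whose base locus $\PP^{g-2}$ contains $\mathrm{Vertex}(Q)$, with $H_0$ the hyperplane cutting out $C$ on $S$. Then $Q|_{H_t}$ has rank $3$ for every $t$, and a generic such pencil may be taken so that each $C_t:=S\cap H_t$ is a smooth canonical curve of genus $g$; hence $C_t$ lies on the rank $3$ quadric $Q|_{H_t}$ and carries a vanishing theta-null $\xi_t$. Letting $t$ vary yields a rational curve $\Gamma\cong\PP^1\hookrightarrow\ss_g^+$ with $\Gamma\subset\thet$ and $[C,\eta]\in\Gamma$; and as $(S,Q,\{H_t\})$ varies, these $\Gamma$ pass through a general point of $\thet$.

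The negativity $\Gamma\cdot\thet<0$ is then read off from the geometry of the $K3$ surface. The image $\pi_*(\Gamma)$ is a Lefschetz pencil of genus $g$ curves on $S$, so $\lambda\cdot\pi_*(\Gamma)=\chi(\OO_S)+g-1=g+1$, $\delta_0\cdot\pi_*(\Gamma)=6g+18$ and $\delta_i\cdot\pi_*(\Gamma)=0$ for $i\ge 1$. As the pencil has no reducible fibres, $\alpha_i\cdot\Gamma=\beta_i\cdot\Gamma=0$ for $i\ge 1$; moreover at each of the $6g+18$ nodal fibres the limiting stable spin curve lies in $A_0$ rather than $B_0$, since a generic pencil avoids the higher-codimension locus of nodal hyperplane sections whose normalisation carries a vanishing theta-null, so $\beta_0\cdot\Gamma=0$ and $\alpha_0\cdot\Gamma=6g+18$. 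Substituting into $\thet\equiv\tfrac14\lambda-\tfrac1{16}\alpha_0-\tfrac12\sum_{i\ge1}\beta_i$ gives
$$\thet\cdot\Gamma=\frac{g+1}{4}-\frac{6g+18}{16}=-\,\frac{g+7}{8}<0 .$$
Uniruledness of $\thet$ is then immediate, and rigidity is formal: if $D\equiv n\thet$ is effective with $D\ne n\thet$, removing from $D$ its maximal multiple of $\thet$ leaves an effective divisor $D'\equiv n'\thet$ with $n'\ge 1$ and $\thet\not\subseteq\mathrm{Supp}(D')$, whence $0\le D'\cdot\Gamma=n'\,\thet\cdot\Gamma<0$ for $\Gamma$ through a general point of $\thet$ --- a contradiction.

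The main obstacle is the first step: producing, through a general point of $\thet$, a (possibly nodal) $K3$ surface lying on a quadric of small rank and restricting to $Q_C$ in the chosen hyperplane. This is precisely where $g\le 9$ becomes essential, and it rests on Mukai's classification of $K3$ surfaces of genus $\le 9$ together with a parameter count ensuring that the family of vanishing-theta-null curves so obtained has dimension $\dim\thet$, rather than collapsing into a proper subvariety of $\thet$.
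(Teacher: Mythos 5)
Your construction is \emph{not} the one the paper uses for this theorem, and as written it contains a genuine error in the intersection-number computation that your conclusion happens to survive only by luck of sign.

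The paper's proof of Theorem \ref{extremalthetanull} does not use $K3$ surfaces at all. It projects the canonical curve $C\subset\PP^{g-1}$ from a general centre to $\PP^6$, observes that the image $C'$ lies on at least $31-3g\geq 4$ quadrics when $g\leq 9$ (this is where the genus bound enters, not Mukai's classification), and takes $S:=\tilde Q\cap Q_1\cap Q_2\cap Q_3$ --- an $8$-nodal canonical surface of general type, not a $K3$. The relevant pencil is the whole linear system $|C|$ on $S$, which is one-dimensional by adjunction. The blown-up nodes contribute $\Gamma\cdot\beta_0=8$, and the final answer is $\Gamma\cdot\thet=-2$ for every $g\leq 9$. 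Your proposal instead follows the pattern of the paper's Section 3 --- lifting the rank-$3$ quadric to a rank-$4$ quadric containing a doubly-elliptic $K3$ and cutting by hyperplanes through the vertex --- which is the substantially harder construction reserved for Proposition \ref{pencil8}, needed there only because it must also kill $\pi^*(\mm_{8,7}^2)$.

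The concrete gap is your claim that the pencil has no reducible fibres and that $\beta_0\cdot\Gamma=0$. For a pencil of hyperplanes in $\PP^g$ through $\mathrm{Sing}(Q)$ ($\mathrm{rank}\,Q=4$), one is effectively choosing a pencil of planes in $\PP^3$ through a line $l_0$; the line meets the smooth quadric surface $Q_0$ in two points, and through those points pass rulings of $Q_0$, which force exactly two reducible members $f^{-1}(l_1)\cup f^{-1}(m_2)$ and $f^{-1}(l_2)\cup f^{-1}(m_1)$, each a union of two elliptic curves meeting in $g-1$ points. These are not avoidable by genericity: they come from the rulings of the rank-$4$ quadric. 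Moreover, each such reducible fibre lies over $\delta_0$ (not $\delta_i$ with $i\geq 1$) and contributes $(g-1)/2$ to $\Gamma\cdot\beta_0$ via the half-integer local analysis in the proof of Lemma \ref{thetintersection} (the orbifold automorphism $\mathbb Z_2$ at the spin curve with $g-1$ exceptional components makes $B_0$ non-Cartier there). So $\Gamma\cdot\beta_0=g-1$, hence $\Gamma\cdot\alpha_0=(6g+18)-2(g-1)=4g+20$, and $\Gamma\cdot\thet=\frac{g+1}{4}-\frac{4g+20}{16}=-1$, not $-(g+7)/8$. You also gloss over the existence of a rank-$4$ lift $Q\supset S$ sweeping out $\thet$: a general $K3$-extension of $C$ lies on a rank-$5$ quadric, and arranging rank $4$ while keeping the family full-dimensional is precisely the content of the two propositions preceding Lemma \ref{nodalpencil} --- it is not automatic.
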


\begin{proof} We assume $7\leq g\leq 9$, the other cases being similar and simpler. A general point $[C, \eta_C]\in \Theta_{\mathrm{null}}$ corresponds to a canonical
curve $C\stackrel{|K_C|}\hookrightarrow \PP^{g-1}$ lying on a rank $3$ quadric $Q\subset \PP^{g-1}$ such that
$C\cap \mathrm{Sing}(Q)=\emptyset$. The pencil $\eta_C$ is recovered from the ruling of $Q$.

Let
$V\in G\bigl(7, H^0(C, K_C)\bigr)$ be a general subspace such that if $\pi_V:\PP^{g-1}\dashrightarrow \PP(V^{\vee})$ is the projection, then
 $\tilde{Q}:=\pi_V(Q)$ is a quadric of rank $3$. Let $C':=\pi_V(C)\subset \PP(V^{\vee})$ be the projection of the canonical curve $C$.
 By counting dimensions we find that
$$\mbox{dim}\bigl\{I_{C'/\PP(V^{\vee})}(2):=\mbox{Ker}\{\mathrm{Sym}^2(V)\rightarrow H^0(C, K_C^{\otimes 2})\}\bigr\}\geq 31-3g\geq 4,$$
that is, the embedded curve $C'\subset \PP^6$ lies on at least $4$ independent quadrics, namely the rank $3$ quadric $\tilde{Q}$ and $Q_1, Q_2, Q_3\in |I_{C'/\PP(V^{\vee})}(2)|$.
By choosing $V$ sufficiently general we make sure that  $S:=\tilde{Q}\cap Q_1\cap Q_2 \cap Q_3$ is a canonical surface in $\PP(V^{\vee})$ with $8$ nodes corresponding to the intersection $\bigcap_{i=1}^3 Q_i\cap \mathrm{Sing}(\tilde{Q})$ (This transversality statement can also be checked with Macaulay by representing $C$ as a section of the corresponding Mukai variety). From the exact sequence on $S$,
$$0\longrightarrow \OO_S\longrightarrow \OO_S(C)\longrightarrow \OO_C(C)\longrightarrow 0,$$
coupled with the adjunction formula $\OO_C(C)=K_C\otimes K_{S |C}^{\vee}=\OO_C$, as well as the fact $H^1(S, \OO_S)=0$, it follows that
$\mbox{dim }|C|= 1$, that is, $C\subset S$ moves in its linear system. In particular, $\thet$ is a  uniruled divisor for $g\leq 9$.

We determine the numerical parameters of the family $\Gamma \subset \ss_g^+$ induced by varying $C\subset S$. Since $C^2=0$, the pencil $|C|$ is base point free and gives rise to a fibration $f:\tilde{S}\rightarrow \PP^1$, where $\tilde{S}:=\mathrm{Bl}_8(S)$ is the blow-up of the nodes of $S$. This in turn induces a moduli map $m:\PP^1\rightarrow \ss_g^+$ and $\Gamma=:m(\PP^1).$  We have the formulas $$\Gamma \cdot \lambda=m^*(\lambda)=\chi(S, \OO_S)+g-1=8+g-1=g+7,\ \hfill$$ and
$$\Gamma\cdot \alpha_0+2\Gamma \cdot \beta_0=m^*(\pi^*(\delta_0))=m^*(\alpha_0)+2m^*(\beta_0)=c_2(\tilde{S})+4(g-1).$$
Noether's formula gives that
$c_2(\tilde{S})=12\chi(\tilde{S}, \OO_{\tilde{S}})-K_{\tilde{S}}^2=12\chi(S, \OO_S)-K_S^2=80$, hence $m^*(\alpha_0)+2m^*(\beta_0)=4g+76.$ The singular fibres  corresponding to spin curves lying in $B_0$ are those in the fibres over the blown-up nodes and all contribute with multiplicity $1$, that is, $\Gamma \cdot \beta_0=8$ and then $\Gamma \cdot \alpha_0=4g+60$.
It follows that $\Gamma\cdot \thet=-2<0$ (independent of $g$!), which finishes the proof.

To illustrate one of the  cases $g<7$, we discuss the situation on $\ss_4^+$.  We denote by $S=\mathbb F_2$ the blow-up of the vertex of a cone $Q\subset \PP^3$ over a conic in $\PP^3$ and write $\mathrm{Pic}(S)=\mathbb Z\cdot F+\mathbb Z\cdot C_0$, where $F^2=0$, $C_0^2=-2$ and $C_0\cdot F=1$. We choose a Lefschetz pencil of genus $4$ curves in the linear system $|3(C_0+2F)|$. By blowing-up the $18=9(C_0+2F)^2$ base points, we obtain a fibration $f:\tilde{S}:=\mathrm{Bl}_{18}(S)\rightarrow \PP^1$ which induces a family of spin curves $m:\PP^1\rightarrow \ss_4^+$ given by $m(t):=[f^{-1}(t), \OO_{f^{-1}(t)}(F)]$. We have the formulas
$$m^*(\lambda)=\chi(\tilde{S}, \OO_{\tilde{S}})+g-1=4, \   \mbox{ }\mbox{ and }$$
$$m^*(\pi^*(\delta_0))=m^*(\alpha_0)+2m^*(\beta_0)=c_2(\tilde{S})+4(g-1)=34.$$
 The singular fibres lying in $B_0$ correspond to curves in the Lefschetz pencil on $Q$ passing through the vertex of the cone, that is, when $f^{-1}(t_0)$ splits as $C_0+D$, where $D\subset \tilde{S}$ is the residual curve. Since $C_0\cdot D=2$ and $\OO_{C_0}(F)=\OO_{C_0}(1)$, it follows that $m(t_0)\in B_0$. One finds that $m^*(\beta_0)=1$, hence $m^*(\alpha_0)=32$ and
$m^*(\thet)=-1$. Since $\Gamma:=m(\PP^1)$ fills-up the divisor $\thet$, we obtain that $[\thet]\in \mathrm{Eff}(\ss_4^+)$ is rigid.
\end{proof}

\section{Spin curves of genus $8$}

The moduli space $\cM_8$ carries one Brill-Noether divisor, the locus of plane septics $$\cM_{8, 7}^2:=\{[C]\in \cM_8: G_7^2(C)\neq \emptyset\}.$$ The locus $\mm_{8, 7}^2$ is irreducible and for a known constant $c_{8, 7}^2\in \mathbb Z_{>0}$, one has, cf. \cite{EH2},
$$\mathfrak{bn}_8:=\frac{1}{c_{8, 7}^2} \mm_{8, 7}^2\equiv 22\lambda-3\delta_0-14\delta_1-24\delta_2-30\delta_3-32\delta_4\in \mbox{Pic}(\mm_8).$$
In particular, $s(\mm_{8, 7}^2)=6+12/(g+1)$ and this is the minimal slope of an effective divisor on $\mm_8$. The following fact is probably well-known:

\begin{proposition}\label{septics}
Through a general point of $\mm_{8, 7}^2$ there passes a rational curve $R\subset \mm_8$ such that $R\cdot \mm_{8, 7}^2<0$. In particular, the class $[\mm_{8, 7}^2]\in \mathrm{Eff}(\mm_8)$ is rigid.
\end{proposition}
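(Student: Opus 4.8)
The plan is to mimic the construction used in the proof of Theorem~\ref{extremalthetanull}, replacing the family of nodal canonical surfaces in $\PP^6$ by a family of surfaces adapted to the plane-septic geometry. A general $[C]\in \mm_{8,7}^2$ carries a $\mathfrak g^2_7$, i.e. a plane model $C\to \PP^2$ of degree $7$, and one may realize $C$ as lying on a surface $S\subset \PP^2$ blown up at the $\binom{7-1}{2}-8 = 7$ "missing" nodes, or more conveniently, one works directly on a rational surface obtained from $\PP^2$: blow up $\PP^2$ at the appropriate base points so that the strict transform of a pencil of septics (all through the same set of assigned simple base points) sweeps out a family of genus-$8$ curves. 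First I would set up a rational surface $S$ with $\mathrm{Pic}(S) = \ZZ H \oplus \bigoplus_i \ZZ E_i$ together with a base-point-free pencil $|C|$ of genus-$8$ curves with $C^2 = 0$, $C\cdot K_S = -2$ (forced by adjunction and $p_a(C) = 8$), obtained by blowing up the correct number of points on $\PP^2$. The existence of such a pencil through a general plane septic is the content of a dimension count analogous to the one in Theorem~\ref{extremalthetanull}: the residual linear system on the blown-up surface has the expected dimension $\geq 1$.

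Next I would compute the numerical invariants of the induced moduli map $m:\PP^1\to \mm_8$, exactly as in the spin case: from the fibration $f:\tilde S\to \PP^1$ (after blowing up the base points of $|C|$) one gets $R\cdot\lambda = \chi(\tilde S,\OO_{\tilde S}) + g - 1 = g - 1 = 7$ since $\tilde S$ is rational, and $R\cdot\delta_0 = c_2(\tilde S) + 4(g-1)$ computed via Noether's formula $c_2(\tilde S) = 12\chi(\OO_{\tilde S}) - K_{\tilde S}^2$. The remaining boundary intersections $R\cdot\delta_i$ for $i\geq 1$ should be arranged to vanish (or be controlled) by choosing the pencil general, so that all singular fibres are irreducible nodal curves. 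Then one evaluates $R\cdot \mm_{8,7}^2 = c_{8,7}^2\,(R\cdot \mathfrak{bn}_8) = c_{8,7}^2\bigl(22(R\cdot\lambda) - 3(R\cdot\delta_0) - \sum_{i\geq 1}\ast\bigr)$ and checks that this is strictly negative. Since $R$ sweeps out $\mm_{8,7}^2$ (the pencil moves with the choice of general $[C]$ in the divisor), negativity of $R\cdot \mm_{8,7}^2$ forces rigidity: any effective $D\equiv n\,\mm_{8,7}^2$ must contain $\mm_{8,7}^2$ with multiplicity $\geq n$, and subtracting leaves an effective divisor numerically trivial on a covering family, hence $D = n\,\mm_{8,7}^2$.

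The main obstacle I expect is the same transversality/genericity issue that appears in Theorem~\ref{extremalthetanull}: one must verify that the chosen pencil of septics on the blown-up surface genuinely has a $1$-dimensional space of deformations with $C$ moving (not just numerically, but that $h^0(S,\OO_S(C)) \geq 2$ with the expected singular behaviour), and that the general member of the pencil is a smooth curve of genus exactly $8$ whose associated point in $\mm_8$ is general in $\mm_{8,7}^2$ — in particular that the moduli map $m$ is non-constant and its image is not contained in any smaller stratum. As in the proof above, this transversality can be confirmed by an explicit computation (e.g. with Macaulay), or by exhibiting a single good pencil and invoking semicontinuity. The numerical computation itself, once the geometry is in place, is routine and parallels the spin-curve calculation line by line; the arithmetic will produce $R\cdot\mathfrak{bn}_8 < 0$ because the slope $s(\mm_{8,7}^2) = 6 + 12/9$ is the minimal slope and the rational curve $R$ coming from a pencil on a rational surface has slope $\delta_0/\lambda$ large enough to beat it.
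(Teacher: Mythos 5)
Your overall strategy is the paper's: take a Lefschetz pencil of $7$-nodal plane septics with assigned nodes, blow up the nodes and the unassigned base points to get a fibration $f:\tilde S\to\PP^1$, let $R$ be the image of the induced moduli map, and verify $R\cdot\mathfrak{bn}_8<0$ by the standard formulas for $R\cdot\lambda$ and $R\cdot\delta_0$. But there is a numerical error that would actually reverse the sign of the final answer, so as written the argument fails.

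You assert $R\cdot\lambda = \chi(\tilde S,\OO_{\tilde S}) + g - 1 = g - 1 = 7$ ``since $\tilde S$ is rational,'' which assumes $\chi(\OO_{\tilde S}) = 0$. For a rational surface $\chi(\OO) = 1$ (it is a birational invariant equal to $\chi(\OO_{\PP^2})$), so the correct value is $R\cdot\lambda = g = 8$. This is not a harmless slip: the same wrong $\chi$ would be fed into Noether's formula $c_2(\tilde S)=12\chi(\OO_{\tilde S})-K_{\tilde S}^2$, and with $K_{\tilde S}^2 = 9 - 28 = -19$ one gets $c_2 = 19$ (instead of $31$) and hence $R\cdot\delta_0 = 47$ (instead of $59$). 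Then $R\cdot\mathfrak{bn}_8 = 22\cdot 7 - 3\cdot 47 = +13 > 0$, the wrong sign, whereas the correct computation gives $22\cdot 8 - 3\cdot 59 = -1 < 0$. The margin here is exactly $-1$, because the slope $59/8$ of this rational pencil only barely exceeds $s(\mm_{8,7}^2)=22/3$; there is no slack to absorb an arithmetic error, and your closing heuristic that ``the slope is large enough to beat it'' is not a substitute for the exact count. A secondary sign of the same confusion: you write $C\cdot K_S = -2$ ``forced by adjunction and $p_a(C)=8$,'' but adjunction with $C^2=0$ gives $C\cdot K_S = 2p_a(C)-2 = 14$ (the value $-2$ is what a rational fibre would give). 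Once $\chi(\OO_{\tilde S})=1$ is used throughout, the rest of your outline---that $R$ avoids $\Delta_1,\ldots,\Delta_4$ for a general pencil, that it covers $\mm_{8,7}^2$, and that rigidity follows from $R\cdot\mm_{8,7}^2<0$ on a covering family---is exactly the paper's argument and is correct.
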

\begin{proof}
One takes a Lefschetz pencil of nodal plane septic curves with $7$ assigned nodes in general position (and $21$ unassigned base points). After blowing up the $21$ unassigned base points as well as the $7$ nodes, we obtain a fibration $f:S:=\mathrm{Bl}_{28}(\PP^2)\rightarrow \PP^1$, and the corresponding moduli map $m:\PP^1\rightarrow \mm_8$ is a covering curve for the irreducible divisor $\mm_{8, 7}^2$. The numerical invariants of this pencil are
$$m^*(\lambda)=\chi(S, \OO_S)+g-1=8\ \mbox{ and } m^*(\delta_0)=c_2(S)+4(g-1)=59,$$
while  $m^*(\delta_i)=0$ for $i=1, \ldots, 4$. We find  $m^*([\mm_{8, 7}^2])=c^2_{8, 7}(8\cdot 22-3\cdot 59)=-c^2_{8, 7}<0$.
\end{proof}

Using (\ref{thetanull}) we find the following explicit representative for the canonical class $K_{\ss_8^+}$:
\begin{equation}\label{can}
K_{\ss_8^+}\equiv \frac{1}{2} \pi^*(\mathfrak{bn}_8)+8\thet+\sum_{i=1}^4 (a_i\ \alpha_i+b_i\ \beta_i),
\end{equation}
where $a_i, b_i>0$ for $i=1, \ldots, 4$. The multiples of each irreducible component appearing in (\ref{can})  are rigid divisors on $\ss_8^+$, but in principle, their sum could still be a movable class. Assuming for a moment Proposition \ref{pencil8}, we explain how this implies Theorem \ref{spin8}:
\vskip 5pt

\noindent \emph{Proof of Theorem \ref{spin8}.} The covering curve $R\subset \thet$ constructed in Proposition \ref{pencil8}, satisfies $R\cdot \thet<0$ as well as $R\cdot \pi^*(\mm_{8, 7}^2)=0$ and $R\cdot \alpha_i=R\cdot \beta_i=0$ for $i=1, \ldots, 4$. It follows from (\ref{can}) that for each $n\geq 1$, one has an equality of linear series on $\ss_8^+$
$$|nK_{\ss_8^+}|=8n\thet+|n(K_{\ss_8^+}-8\thet)|.$$

\noindent Furthermore, from (\ref{can}) one finds  constants $a_i'>0$ for $i=1, \ldots, 4$, such that if $$D\equiv 22\lambda-3\delta_0-\sum_{i=1}^4 a_i'\ \delta_i\in \mbox{Pic}(\mm_8),$$ then the difference $\frac{1}{2}\pi^*(D)-(K_{\ss_8^+}-8\thet)$ is still effective on $\ss_8^+$. We can thus write
$$0\leq \kappa(\ss_8^+)=\kappa\bigl(\ss_8^+, K_{\ss_8^+}-8\thet\bigr)\leq \kappa\bigl(\ss_8^+, \frac{1}{2}\pi^*(D)\bigr)=\kappa\bigl(\ss_8^+, \pi^*(D)\bigr).$$
We claim that $\kappa\bigl(\ss_8^+, \pi^*(D)\bigr)=0$.  Indeed, in the course of the proof of Proposition \ref{septics} we have constructed a covering family $B\subset \mm_8$ for the divisor $\mm_{8, 7}^2$ such that $B\cdot \mm_{8, 7}^2<0$ and $B\cdot \delta_i=0$ for $i=1, \ldots, 4$. We lift $B$ to a family $R\subset \ss_8^+$ of spin curves by taking
$$\tilde{B}:=B\times _{\mm_8} \ss_8^+=\{[C_t, \ \eta_{C_{t}}]\in \ss_8^{+}:
[C_{t}]\in B, \eta_{C_{t}}\in \overline{\mbox{Pic}}^{7}(C_{t}), t
\in \PP^1 \}\subset \ss_8^+.$$ One notes that $\tilde{B}$ is disjoint from the boundary divisors $A_i, B_i\subset \ss_8^+$ for $i=1, \ldots, 4$, hence  $\tilde{B}\cdot \pi^*(D)=2^{g-1}(2^g+1) (B\cdot \mm_{8, 7}^2)_{\mm_8}<0$. Thus we write that
$$\kappa\bigl(\ss_8^+, \pi^*(D)\bigr)=\kappa\bigl(\ss_8^+, \pi^*(D-(22\lambda-3\delta_0)\bigr)=\kappa \bigl(\ss_8^+, \sum_{i=1}^4 a_i'(\alpha_i+\beta_i)\bigr)=0.$$
$ \hfill \Box$

\section{A family of spin curves  $R \subset \ss^+_8$ with $R \cdot \pi^*(\mm_{8,7}^2) = 0$ and $R \cdot \thet = -1$}
The aim of this section is to prove Proposition \ref{pencil8}, which is the key ingredient in the proof of Theorem \ref{spin8}. We begin by reviewing  facts about the geometry of $\mm_8$, in particular the construction of general curves of genus 8 as complete intersections in a rational homogeneous variety, cf. \cite{M2}.

We fix $V\cong \mathbb C^6$ and denote by $\GG:=G(2, V)\subset \PP(\wedge^2 V)$ the Grassmannian of lines. Noting that smooth codimension $7$ linear sections of $\GG$ are canonical curves of genus $8$, one is led to consider the \emph{Mukai model} of the moduli space of curves of genus $8$
$$\mathfrak{M}_8:=G(8, \wedge^2 V)// SL(V).$$
There is a birational map $f:\mm_8\dashrightarrow \mathfrak{M}_8$, whose inverse is given by $f^{-1}(H):=\GG\cap H$, for a general $H\in G(8, \wedge^2 V)$. The map $f$ is constructed as follows:  Starting with a curve $[C]\in \cM_8-\cM_{8, 7}^2$, one notes that $C$ has a finite number of pencils $\mathfrak g^1_5$. We choose $A\in W^1_5(C)$ and set $L:=K_C\otimes A^{\vee}\in W^3_9(C)$. There exists a unique rank $2$ vector bundle $E\in SU_C(2, K_C)$ (independent of $A$!), sitting in an extension
 $$0\longrightarrow A\longrightarrow E\longrightarrow L\longrightarrow 0,$$
 such that $h^0(E)=h^0(A)+h^0(L)=6$. Since $E$ is globally generated, we  define the map
 $$\phi_E: C\rightarrow G\bigl(2, H^0(C, E)^{\vee}\bigr), \ \mbox{    } \ \mbox{ } \phi_E(p):=E(p)^{\vee} \ \bigl(\hookrightarrow H^0(C, E)^{\vee}\bigr),$$ and let $\wp: G(2, H^0(C, E)^{\vee})\rightarrow \PP(\wedge^2 H^0(C, E)^{\vee})$ be the Pl\"ucker embedding.  The determinant map $u:\wedge ^2 H^0(E)\rightarrow H^0(K_C)$ is surjective, that is, $H^0(K_C)^{\vee}\in G(8, \wedge^2 H^0(E)^{\vee})$, see \cite{M2} Theorem C. We set
 $$f([C]):=[C\stackrel{\wp\circ \phi_E}\longrightarrow \PP\bigl(\wedge^2 H^0(E)^{\vee}\bigr), \ \PP(H^0(K_C)^{\vee})] \ \mathrm{ mod }\  SL(V) \in \mathfrak{M}_8.$$
 It follows from \cite{M2} that the exceptional divisors of $f$ are the Brill-Noether locus $\mm_{8, 7}^2$ and the boundary divisors $\Delta_1, \ldots, \Delta_4$. The map $f^{-1}$ does not contract any divisors.
 \vskip 4pt

Inside the moduli space $\F_8$ of polarized $K3$ surfaces $[S, h]$ of degree $h^2=14$, we consider the following \emph{Noether-Lefschetz} divisor
$$\mathfrak{NL}:=\{[S, \OO_S(C_1+C_2)]\in \F_8: \mathrm{Pic}(S)\supset \mathbb Z\cdot C_1\oplus \mathbb Z\cdot C_2,\  \  C_1^2=C_2^2=0, \ C_1\cdot C_2=7\},$$
of doubly-elliptic $K3$ surfaces. For a general element $[S, \OO_S(C)]\in \mathfrak{NL}$, the embedded surface $S\stackrel{|\OO_S(C)|}\hookrightarrow \PP^8$ lies on a rank $4$ quadric whose rulings induce the elliptic pencils $|C_1|$ and $|C_2|$ on $S$. We denote by $\mathfrak{NL}'\subset \mathfrak{NL}$ the open subset corresponding to polarized surfaces $[S, \OO_S(C_1+C_2)]$ such that $\mbox{Pic}(S)=\mathbb Z\cdot C_1\oplus \mathbb Z\cdot C_2$. Then we consider the $\PP^3$-bundle \ $\cU\rightarrow \mathfrak{NL}'$ classifying pairs $\bigl([S, \OO_S(C_1+C_2)], C\subset S\bigr)$, where $$C\in |H^0(S, \OO_S(C_1))\otimes H^0(S, \OO_S(C_2))|\subset |H^0(S, \OO_S(C_1+C_2))|.$$ An element of $\cU$ corresponds to a hyperplane section $C\subset S\subset \PP^8$ of a doubly-elliptic $K3$ surface, such that the intersection of $C$ with the rank $4$ quadric induced by the elliptic pencils, has rank $3$. There exists a rational map $$q:\cU\dashrightarrow \thet, \ \ \ \mbox{ } \ q\bigl([S, \OO_S(C_1+C_2)], C\bigr):=[C, \OO_C(C_1)=\OO_C(C_2)].$$ Clearly $\cU$ is irreducible and $\mbox{dim}(\cU)=21 \bigl(=3+\mbox{dim}(\mathfrak{NL})\bigr)$. We shall show that the morphism $q$ is dominant, by explicitly describing its generic fibre. This produces a parametrization of the divisor $\thet$, in particular it provides an explicit covering curve.
\vskip 5pt

We fix a general point  $[C, \eta]\in \thet\subset \ss_8^+$, with $\eta$ a vanishing theta-null. Then
$$
C \subset Q \subset \PP^7:=\PP\bigl(H^0(C, K_C)^{\vee}\bigr),
$$
where $Q\in H^0(\PP^7, \mathcal{I}_{C/\PP^7}(2))$ is the rank $3$ quadric such that the ruling of $Q$ cuts out on $C$ precisely  $\eta$.
As explained, there exists a linear embedding $\PP^7 \subset \PP^{14}:=\PP\bigl(\wedge^2 H^0(E)^{\vee}\bigr)$  such that
$\PP^7 \cap \GG = C$. The restriction map yields an isomorphism between spaces of quadrics, cf. \cite{M2},
$$\mathrm{res}_C: H^0(\GG, \mathcal{I}_{\GG/\PP^{14}}(2))\stackrel{\cong}\longrightarrow H^0(\PP^7, \mathcal{I}_{C/\PP^7}(2)).$$
In particular there is a unique quadric $\GG\subset \tilde{Q} \subset \PP^{14}$ such that $\tilde{Q} \cap \PP^7=Q$.

There are three possibilities for the rank of any quadric $\tilde{Q}\in H^0(\PP^{14}, \mathcal{I}_{\GG/\PP^{14}}(2))$:
\  (a) $\mathrm{rk}(\tilde{Q})=15$, \ (b) $\mathrm{rk}(\tilde{Q})=6$ and then $\tilde{Q}$ is a \emph{Pl\"ucker quadric}, or \ (c) $\mbox{rk}(\tilde{Q})=10$, in which case $\tilde{Q}$ is a sum of two Pl\"ucker quadrics, see \cite{M2}.
\vskip4pt
\begin{proposition} For a general $[C, \eta]\in \thet$, the quadric $\tilde{Q}$ is smooth, that is, $\mathrm{rk}(\tilde{Q})=15$.
\end{proposition}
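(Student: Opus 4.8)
The goal is to prove that the generic value $\rho_0\in\{6,10,15\}$ of $\mathrm{rk}(\tilde Q)$ on $\thet$ equals $15$. First one checks that $\mathrm{rk}(\tilde Q)$ genuinely depends only on the moduli point $[C,\eta]$. Since $\thet$ and $\pi^*(\mm_{8,7}^2)$ are distinct divisors on $\ss_8^+$, a general $[C,\eta]\in\thet$ has $[C]\notin\cM_{8,7}^2$, so the Mukai realization $C=\GG\cap\PP(W)$ applies for some $W\in G(8,\wedge^2 V)$, the map $\mathrm{res}_C$ is an isomorphism, and $\tilde Q=\mathrm{res}_C^{-1}(Q)$ is determined by the pair $(C,Q)$ — which in turn is determined by $[C,\eta]$ up to the action of $PGL(V)$ on $\PP^{14}$ preserving $\GG$. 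As this action preserves the rank of a quadric, $\mathrm{rk}(\tilde Q)$ is intrinsic, and since $\thet$ is irreducible it has a well-defined generic value $\rho_0$. It remains to exclude $\rho_0\le 10$.

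The plan is a dimension count. Set
$$\Sigma:=\bigl\{(W,\tilde Q)\in G(8,\wedge^2 V)\times|\I_{\GG/\PP^{14}}(2)|\ :\ \tilde Q\cap\PP(W)\ \text{has rank}\ \le 3\bigr\},$$
and let $\Sigma_0\subset\Sigma$ be the sublocus where, in addition, $C:=\GG\cap\PP(W)$ is smooth with $[C]\notin\cM_{8,7}^2$. By Mukai's theorem $\Sigma_0$ dominates $\thet$ under $(W,\tilde Q)\mapsto[C,\eta]$, where $\eta$ is cut out on $C$ by the ruling of $Q:=\tilde Q\cap\PP(W)$, and the fibre over a general $[C,\eta]$ is a single $PGL(V)$-orbit: the realization $C=\GG\cap\PP(W)$ is unique up to $PGL(V)$, and a rank-$3$ quadric through $C$ is determined by the pencil it cuts out, hence by $\eta$. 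Thus this orbit has dimension $\dim PGL(V)=35$, and any irreducible component $X$ of $\Sigma_0$ dominating $\thet$ satisfies $\dim X=\dim\thet+35=20+35=55$.

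Next I would bound $\dim\Sigma$ by stratifying the projection to $|\I_{\GG/\PP^{14}}(2)|\cong\PP^{14}$ according to $\mathrm{rk}(\tilde Q)$. Over the dense open stratum of smooth quadrics ($14$ parameters) the fibre consists of those $8$-dimensional $W$ on which the nondegenerate form $\tilde q$ has rank $\le 3$; for such $W$ the radical $U:=W\cap W^{\perp_{\tilde q}}$ is a $\tilde q$-isotropic $5$-plane and $U\subset W\subset U^{\perp}$, so the fibre has dimension $\bigl(\dim G(5,15)-{6\choose 2}\bigr)+\dim G(3,5)=(50-15)+6=41$, and this stratum of $\Sigma$ has dimension $14+41=55$. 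Over the rank-$10$ stratum — the Pfaffian cubic hypersurface in $\PP^{14}$, of dimension $13$ — an analogous but longer count, organised by $k:=\dim(W\cap\ker\tilde q)$ (the condition $\mathrm{rk}(\tilde q|_W)=3$ forces $2\le k\le 5$, the largest contribution occurring at $k=3$ and equal to $6+13+9=28$), gives fibre dimension $\le 28$, so this stratum of $\Sigma$ has dimension $\le 41$. The rank-$6$ stratum (a copy of $G(4,V)$, of dimension $8$) is handled the same way and has dimension $\le 8+29=37$. In particular $\{(W,\tilde Q)\in\Sigma:\mathrm{rk}(\tilde Q)\le 10\}$ has dimension at most $41$.

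Finally, suppose $\rho_0\le 10$. Because $\mathrm{rk}(\tilde Q)$ is constant on the $PGL(V)$-orbit fibres of $\Sigma_0\to\thet$, and these fibres sweep out a dense subset of the $55$-dimensional component $X$ as $[C,\eta]$ ranges over a dense subset of $\thet$, the whole of $X$ would be contained in $\{\mathrm{rk}(\tilde Q)\le 10\}\subset\Sigma$, of dimension $\le 41<55$ — a contradiction. Hence $\rho_0=15$, i.e. $\tilde Q$ is smooth for a general $[C,\eta]\in\thet$. The main obstacle is the Schubert-type dimension estimate for the degenerate strata of $\Sigma$ over the Pfaffian cubic, where one must keep careful track of how $W$ meets the kernel of $\tilde q$. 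An alternative route that bypasses this is to combine the lower-semicontinuity of $\mathrm{rk}(\tilde Q)$ with a single explicit example: a $\PP^7$ — produced either by the same incidence-variety count restricted to one fixed smooth $\tilde Q$, or by a random search in \texttt{Macaulay2} — for which $\tilde Q\cap\PP^7$ has rank $3$ while $\GG\cap\PP^7$ is a smooth curve outside $\cM_{8,7}^2$.
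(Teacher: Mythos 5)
Your plan — an incidence-variety dimension count — is a reasonable alternative to the paper's argument, but the count for the degenerate strata contains an error that invalidates the conclusion. When you reconstruct an $8$-plane $W$ from the data $W_0:=W\cap\ker(\tilde q)$ and $\bar W:=(W+\ker\tilde q)/\ker\tilde q$, you must also choose the lift, i.e.\ a section of $\pi^{-1}(\bar W)/W_0 \to \bar W$; these form a torsor over $\mathrm{Hom}(\bar W, \ker(\tilde q)/W_0)$, of dimension $(\dim W - k)(\dim\ker\tilde q - k)$. Your $k=3$ count $6+13+9=28$ (that is $\dim G(3,5)+\dim OG(2,10)+\dim G(3,6)$) is missing exactly this term, here $5\cdot 2=10$. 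Redoing the count over a rank-$10$ quadric with the lifting term included, the maximum is attained not at $k=3$ but at $k=2$ (radical $U$ of $\bar q|_{\bar W}$ of dimension $3$), where one gets $\dim G(2,5)+\dim OG(3,10)+\dim G(3,4)+(6)(3)=6+15+3+18=42$, so the rank-$10$ stratum of $\Sigma$ has dimension $13+42=55$. The rank-$6$ stratum behaves the same way: with the lifting term, the $k=4$ piece has fibre dimension $20+4+3+20=47$, giving $8+47=55$ again. Thus all three rank strata of $\Sigma$ have dimension $55$, the same as the component $\Sigma_0$ dominating $\thet$, and the dimension count cannot decide which stratum contains $\Sigma_0$. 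The contradiction you are aiming for ($55\le 41$) simply does not materialize. (One sees a warning sign already in your smooth-stratum count: it is also exactly $55$, so the whole of $\Sigma$ sits in a divisor of $G(8,\wedge^2 V)$ — consistent with the fact that a generic canonical curve has no vanishing theta-null — and the degenerate strata have no a priori reason to drop below this.)

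The paper's proof takes a genuinely different route that avoids this difficulty. Rather than stratifying $\Sigma$, it argues by contradiction from the structure of the \emph{dual} quadric $\tilde Q^{\vee}$: if $\mathrm{rk}(\tilde Q)\le 10$, the explicit description of $\tilde Q^{\vee}$ for Pl\"ucker and Pfaffian quadrics (Mukai, Prop.~1.8) bounds $\dim\bigl(\PP(\mathrm{Ker}(u))\cap\langle\tilde Q^{\vee}\rangle\bigr)$, and from this one extracts a \emph{pencil} of $8$-planes $\Xi\supset\PP^7$ with $S:=\GG\cap\Xi$ a ($7$-nodal) K3 surface and $\tilde Q\cap\Xi$ of rank $3$. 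One then switches from a count on $G(8,\wedge^2V)$ to a count in the moduli of lattice-polarized K3 surfaces: such a surface has $\mathrm{rk}\,\mathrm{Pic}\ge 8$, so the locus of pairs $(S,C)$ has dimension $\le 20=\dim\thet$, while the pencil of extensions would force it to have dimension $\ge 21$. This K3-side dimension count is where the contradiction actually lives, and it has no analogue in your $\Sigma$-stratification. Your alternative suggestion — exhibiting a single explicit $(W,\tilde Q)$ with $\mathrm{rk}(\tilde Q)=15$, $\mathrm{rk}(\tilde Q\cap\PP(W))=3$ and $\GG\cap\PP(W)$ smooth outside $\cM^2_{8,7}$, then invoking upper semicontinuity of $15-\mathrm{rk}(\tilde Q)$ on the irreducible $\thet$ — would be logically sound, but it is a computational verification rather than a proof in the spirit of either your incidence argument or the paper's.
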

\begin{proof} We may assume that $\mbox{dim }G^1_5(C)=0$ (in particular $C$ has no $\mathfrak g^1_4$'s), and  $G^2_7(C)=\emptyset$. The space $\PP(\mathrm{Ker}(u))\subset \PP\bigl(\wedge^2 H^0(E)\bigr)$ is identified with the space of hyperplanes $H\in (\PP^{14})^{\vee}$ containing the canonical space $\PP^7$.
\vskip 3pt

\noindent {\emph{Claim:}} If $\mbox{rk}({\tilde{Q}})<15$, there exists a pencil of $8$-dimensional planes $\PP^7\subset \Xi \subset \PP^{14}$, such that $S:=\GG\cap \Xi$ \ is a  $K3$ surface containing $C$ as a hyperplane section, and
$$
\mathrm{rk}\bigl\{Q_{\Xi}:=\tilde{Q}\cap \Xi\in H^0(\Xi, \mathcal{I}_{S/\Xi(2)})\bigr\}=3.
$$

The conclusion of the claim contradicts the assumption that $[C, \eta]\in \thet$ is general. Indeed, we pick such an $8$-plane $\Xi$ and corresponding $K3$ surface $S$. Since $\mbox{Sing}(Q)\cap C=\emptyset$, where $Q_{\Xi}\cap \PP^7=Q$, it follows that $S\cap \mbox{Sing}(Q_{\Xi})$ is finite. The ruling of $Q_{\Xi}$ cuts out an elliptic pencil $|E|$ on $S$. Furthermore, $S$ has nodes at the points $S\cap \mbox{Sing}(Q_{\Xi})$. For numerical reasons, $\#\mbox{Sing}(S)=7$, and then on the surface $\tilde{S}$ obtained from $S$ by resolving the $7$ nodes, one has the linear equivalence $$C\equiv 2E+\Gamma_1+\cdots +\Gamma_7,$$ where $\Gamma_i^2=-2, \ \Gamma_i\cdot E=1$ for $i=1, \ldots, 7$ and $\Gamma_i\cdot \Gamma_j=0$ for $i\neq j$. In particular $\mbox{rk}(\mbox{Pic}(\tilde{S}))\geq 8$. A standard parameter count, see e.g. \cite{Do}, shows that
 $$\mathrm{dim}\bigl\{(S, C): C\in |\OO_S(2E+\Gamma_1+\cdots+ \Gamma_7)|\bigr\}\leq 19-7+\mbox{dim}|\OO_{\tilde{S}}(C)|=20.$$
 Since $\mbox{dim}(\thet)=20$ and a general curve $[C]\in \thet$ lies on infinitely many such $K3$ surfaces $S$, one obtains a contradiction.

 \vskip 4pt
We are left with proving the claim made in the course of the proof. The key point is to describe the intersection $\PP(\mbox{Ker}(u))\cap \tilde{Q}^{\vee}$, where we recall that the linear span $\langle \tilde{Q}^{\vee} \rangle$ classifies  hyperplanes $H\in (\PP^{14})^{\vee}$ such that
$\mbox{rk}(\tilde{Q}\cap H)\leq \mbox{rk}(\tilde{Q})-1$. Note also that $\mbox{dim } \langle \tilde{Q}\rangle=\mbox{rk}(\tilde{Q})-2$.

If $\mbox{rk}(\tilde{Q})=6$, then $\tilde{Q}^{\vee}$ is contained in the dual Grassmannian $\GG^{\vee}:=G(2, H^0(E))$, cf. \cite{M2} Proposition 1.8. Points in the intersection $\PP(\mbox{Ker}(u))\cap \GG^{\vee}$ correspond to decomposable tensors $s_1\wedge s_2$, with $s_1, s_2\in H^0(C, E)$,  such that $u(s_1\wedge s_2)=0$. The image of the morphism $\OO_C^{\oplus 2}\stackrel{(s_1, s_2)}\longrightarrow E$ is thus a subbundle $\mathfrak g^1_5$ of $E$ and there is a bijection
$$ \PP(\mathrm{Ker}(u))\cap \GG\bigl(2, H^0(E)\bigr)\cong  W^1_5(C).$$ It follows, there are at most finitely many tangent hyperplanes to $\tilde{Q}$ containing the space $\PP^7=\langle C\rangle$, and consequently, $\mbox{dim}\bigl(\PP(\mbox{Ker}(u))\cap \langle \tilde{Q}^{\vee}\rangle\bigr)\leq 1$. Then there exists a codimension $2$ linear space $W^{12}\subset \PP^{14}$ such that $\mbox{rk}(\tilde{Q}\cap W)=3$, which proves the claim (and much more), in the case $\mbox{rk}(\tilde{Q})=6$.

When $\mbox{rk}(\tilde{Q})=10$, using the explicit description of the dual quadric $\tilde{Q}^{\vee}$ provided in \cite{M2} Proposition 1.8, one finds that  $\mbox{dim}\bigl(\PP(\mbox{Ker}(u))\cap \langle \tilde{Q}^{\vee}\rangle\bigr)\leq 4$. Thus there exists a codimension $5$ linear section
$W^9\subset \PP^{14}$ such that $\mbox{rk}(\tilde{Q}\cap W)=3$, which implies the claim when $\mbox{rk}(\tilde{Q})=10$ as well.

\end{proof}

We consider an $8$-dimensional linear extension
$\PP^7\subset \Lambda^8 \subset \PP^{14}$ of the canonical space $\PP^7=\langle C\rangle$, such that
$S_{\Lambda} := \Lambda \cap \GG$
is a smooth K3 surface. The restriction map
$$\mathrm{res}_{C/S_{\Lambda}}: H^0(\Lambda, \mathcal I_{S_{\Lambda}/\Lambda}(2)) \to H^0(\PP^7, \mathcal I_{C / \PP^7}(2))$$
is an isomorphism, cf. \cite{SD}. Thus there exists a \emph{unique}  quadric
$S_{\Lambda}\subset Q_{\Lambda} \subset \Lambda$
with $Q_{\Lambda} \cap \PP^7 = Q$.
Since $\mbox{rk}(Q)= 3$, it follows that $3 \leq \mbox{rk}(Q_{\Lambda}) \leq 5$ and it is easy to see that for a general
$\Lambda$, the corresponding quadric $Q_{\Lambda}\subset \Lambda$ is of rank $5$. We show however, that one can find $K3$-extensions of the canonical curve $C$, which lie on quadrics of rank $4$:

\begin{proposition} For a general $[C, \eta]\in \thet$, there exists a pencil of $8$-dimensional extensions $$\PP(H^0(C, K_C)^{\vee})\subset \Lambda\subset \PP^{14}$$ such that $\mathrm{rk}(Q_{\Lambda})=4$. It follows that there exists a smooth $K3$ surface $S_{\Lambda}\subset \Lambda$ containing $C$ as a transversal hyperplane section, such that $\mathrm{rk}(Q_{\Lambda})=4$.
\end{proposition}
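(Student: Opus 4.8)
The plan is to analyze the quadrics in $\PP^{14}$ containing $\GG$ that restrict to $Q$ on the canonical space $\PP^7$, and to vary the extension $\Lambda$ so as to lower the rank of the restricted quadric. The starting point is the previous proposition: the unique quadric $\tilde Q\subset\PP^{14}$ with $\tilde Q\cap\PP^7=Q$ is smooth, $\mathrm{rk}(\tilde Q)=15$. For a general $8$-plane $\Lambda$ containing $\PP^7$, the restriction $Q_\Lambda=\tilde Q\cap\Lambda$ has rank $5$; I want to show the locus of $\Lambda$ (inside the $\PP^6$ of extensions, parametrized by $\PP(\mathrm{Ker}(u))$, which is the projectivization of the $7$-dimensional space of hyperplanes through $\PP^7$) for which $\mathrm{rk}(Q_\Lambda)\le 4$ is nonempty, and in fact is one-dimensional (a pencil).

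First I would set up the linear-algebra bookkeeping. Choosing coordinates so that $\tilde Q$ is the standard smooth quadric of rank $15$, an $8$-plane $\Lambda\supset\PP^7$ is cut out by $6$ independent linear forms; equivalently $\Lambda$ is determined by a $6$-dimensional subspace of the $7$-dimensional space $\mathrm{Ker}(u)$ of hyperplanes through $\langle C\rangle$. The restriction $Q_\Lambda$ drops rank by at least $1$ from $5$ precisely when the $8$-plane $\Lambda$ is tangent to the smooth quadric $\tilde Q$ — more precisely, when $\Lambda$ meets the dual variety $\tilde Q^\vee\subset(\PP^{14})^\vee$, i.e. when the $6$-dimensional space of defining forms contains a point of $\tilde Q^\vee$, which for a smooth quadric is again a smooth quadric. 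So the condition $\mathrm{rk}(Q_\Lambda)\le 4$ translates into: the $6$-plane of linear forms defining $\Lambda$ (a $\PP^5$ inside the $\PP^6=\PP(\mathrm{Ker}(u))$) is \emph{tangent} to the quadric $\tilde Q^\vee\cap\PP^6$, equivalently the intersection of the $\PP^5$ with this quadric is singular. Dualizing once more inside $\PP^6$: this is the condition that a point of $\PP^6$ (dual to the $\PP^5$) lies on $(\tilde Q^\vee\cap\PP^6)^\vee$, which is again a quadric $q$ in $\PP^6$ (nondegenerate as long as $\tilde Q^\vee\cap\PP^6$ is, which holds for general $[C,\eta]$ by the genericity already built into the previous proposition).

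Now the key computation: I must show that the $8$-plane $\PP^7$ itself — as a point of this $\PP^6$ of extensions, or rather the base-point structure — forces $\mathrm{rk}(Q_\Lambda)=3$ only on a proper subvariety, while $\mathrm{rk}(Q_\Lambda)=4$ occurs along a \emph{pencil}. Concretely, the extensions $\Lambda$ with $\mathrm{rk}(Q_\Lambda)\le 4$ form the quadric $\{q=0\}\subset\PP^6$ minus the locus where the rank drops further to $3$; that deeper locus is cut by the $3\times 3$ (or appropriate $6\times 6$ minor) conditions, which has codimension $\ge 3$ in $\PP^6$, hence is at most $3$-dimensional — but I actually want the rank-$4$ locus, the complement of this in the quadric fivefold $\{q=0\}$, to contain a pencil through the distinguished directions relevant to $\thet$. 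The honest statement I would prove is: after imposing the compatibility with the rank-$3$ quadric $Q$ on $\PP^7$ (which pins down one isotropic direction of $\tilde Q^\vee$), the residual freedom is exactly a $\PP^1$. This last step — tracking how the rank-$3$ condition on $\PP^7$ interacts with the quadric $q$ on the space of extensions, and verifying the residual locus is one-dimensional and generically of rank exactly $4$ (not $3$) — is the main obstacle, and it is most cleanly handled by the same kind of explicit parameter count as in the previous proposition, or by an explicit Macaulay verification on a section of the Mukai variety as was done there.

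Finally, once a pencil of extensions $\Lambda$ with $\mathrm{rk}(Q_\Lambda)=4$ is produced, the existence of the $K3$ surface $S_\Lambda\subset\Lambda$ containing $C$ as a transversal hyperplane section is immediate: for general $\Lambda$ in the pencil, $S_\Lambda=\Lambda\cap\GG$ is a smooth K3 surface (this is an open condition, and nonempty because smoothness of $\tilde Q$ and the genericity of $[C,\eta]$ let us avoid the bad loci; if necessary one checks it on the pencil directly), $\mathcal I_{C/\PP^7}(2)\cong\mathcal I_{S_\Lambda/\Lambda}(2)$ by \cite{SD} forces $Q_\Lambda\cap\PP^7=Q$, and $\mathrm{rk}(Q_\Lambda)=4$ by construction. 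Transversality $C=S_\Lambda\cap\PP^7$ holds because $\PP^7$ is a general hyperplane section of $S_\Lambda$ (indeed $C$ is smooth), completing the proof.
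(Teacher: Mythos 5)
The paper's proof is a short, explicit linear-algebra construction that your dual-variety framework does not reproduce, and in fact your setup has a conceptual error and then punts at the decisive step.

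The paper views the smooth quadric as an isomorphism $\tilde Q:\wedge^2 H^0(E)^\vee\to \wedge^2 H^0(E)$ and notes that $\ker(u\circ\tilde Q)$ is a $7$-dimensional subspace containing the $5$-dimensional $\ker(Q)\subset H^0(K_C)^\vee$ (because $Q(w)=u(\tilde Q(u^\vee(w)))$). For $v\in\ker(u\circ\tilde Q)$ one sets $\Lambda:=H^0(K_C)^\vee+\mathbb C v$; a one-line check shows $\tilde Q(v,\lambda)=0$ for all $\lambda\in H^0(K_C)^\vee$, so $\ker(Q_\Lambda)=\ker(Q)$ whenever $\tilde Q(v,v)\ne 0$, giving $\mathrm{rk}(Q_\Lambda)=4$. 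The pencil is exactly $\PP\bigl(\ker(u\circ\tilde Q)/\ker(Q)\bigr)\cong\PP^1$, and the rank-$\le 4$ locus in the $\PP^6$ of extensions is this \emph{linear} $\PP^1$ (five linear conditions, since $v$ must lie in $(u\circ\tilde Q)^{-1}(\mathrm{Im}\,Q)$), not a quadric hypersurface as you assert.

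There are two genuine problems with your proposal. First, the tangency condition you use does not detect the rank drop you need: because $\PP^7\subset\Lambda$ and $\tilde Q|_{\PP^7}=Q$ has rank $3$, the restriction $\tilde Q|_\Lambda$ is \emph{always} degenerate (radical of dimension $\ge 4$), so $\Lambda$ is always tangent to $\tilde Q$, and "$\Lambda$ meets $\tilde Q^\vee$" is vacuous. Relatedly, the auxiliary quadric $\tilde Q^\vee\cap\PP(\ker u)$ you want to dualize is \emph{not} nondegenerate for general $[C,\eta]\in\thet$: one computes that its radical is exactly $\ker(Q)$, so it has rank $2$ (a pair of hyperplanes); the "biduality" step therefore does not produce a smooth quadric $q$ in $\PP^6$, and the framework collapses. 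Second, even granting the setup, you explicitly defer the crucial verification ("this last step ... is the main obstacle ... handled by ... an explicit Macaulay verification"), which is not a proof. The statement you need is precisely the content of the proposition, and the paper settles it with the elementary kernel computation above rather than a parameter count or computer check.
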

\begin{proof} We pass from projective to vector spaces and view the rank $15$ quadric $$\tilde{Q}: \wedge^2 H^0(C, E)^{\vee}\stackrel{\sim}\longrightarrow \wedge^2 H^0(C, E)$$ as an isomorphism, which by restriction to $H^0(C, K_C)^{\vee}\subset \wedge^2 H^0(C, E)^{\vee}$, induces the rank $3$ quadric
 $Q:H^0(C, K_C)^{\vee}\rightarrow H^0(C, K_C)$. The map $u\circ \tilde{Q}: \wedge^2 H^0(E)^{\vee}\rightarrow H^0(K_C)$ being surjective, its kernel $\mbox{Ker}(u\circ \tilde{Q})$ is a $7$-dimensional vector space containing the $5$-dimensional subspace $\mbox{Ker}(Q)$. We choose an arbitrary element $$[\bar{v}:=v+\mathrm{Ker}(Q)]\in \PP\Bigl(\frac{\mathrm{Ker}(u\circ \tilde{Q})}{\mathrm{Ker}(Q)}\Bigr),$$ inducing a subspace
 $H^0(C, K_C)^{\vee}\subset \Lambda:=H^0(C, K_C)^{\vee}+\mathbb C v \subset \wedge^2 H^0(C, E)^{\vee},$
 with the property that $\mbox{Ker}(Q_{\Lambda})=\mbox{Ker}(Q)$, where $Q_{\Lambda}: \Lambda \rightarrow \Lambda^{\vee}$ is induced from $\tilde{Q}$ by restriction and projection. It follows that $\mbox{rk}(Q_{\Lambda})=4$. Moreover, we have shown that $\mbox{dim } q^{-1}([C, \eta])\leq 1$, in particular $q$ is dominant.
\end{proof}

\vskip 4pt

Now we can begin the proof of  Proposition \ref{pencil8}. Let
$C \subset Q \subset \PP^7$
be a general canonical curve endowed with a vanishing theta-null, where $Q\in H^0\bigl(\PP^7, I_{C/\PP^7}(2)\bigr)$ is the corresponding rank $3$ quadric. We choose a general $8$-plane $\PP^7\subset \Lambda\subset \PP^{14}$ such that
$S:= \Lambda \cap \GG$
is a smooth K3 surface, and the lift of $Q$ to $\Lambda$
$$Q_{\Lambda}\in H^0\bigl(\Lambda, \mathcal{I}_{S/\Lambda}(2)\bigr)$$
has rank $4$. Moreover, we can assume that $S\cap \mbox{Sing}(Q_{\Lambda})=\emptyset$. The linear projection $f_{\Lambda}:\Lambda \dashrightarrow \PP^3$ with center $\mbox{Sing}(Q_{\Lambda})$, induces a regular map $f:S\rightarrow \PP^3$ with image the smooth quadric $Q_0\subset \PP^3$.  Then $S$ is endowed with two elliptic pencils $|C_1|$ and $|C_2|$ corresponding to the projections of $Q_0\cong \PP^1\times \PP^1$ onto the two factors.  Since $C\in |\OO_S(1)|$, one has a linear equivalence
$C\equiv C_1+C_2$, on $S$.
As already pointed out, $\mbox{deg}(f)=C_1\cdot C_2=C^2/2=7$. The condition $\mbox{rk}(Q_{\Lambda}\cap \PP^7)=\mbox{rk}(Q_{\Lambda})-1$, implies that the hyperplane $\PP^7\in (\Lambda)^{\vee}$ is the pull-back of a hyperplane from $\PP^3$, that is, $\PP^7=f^{-1}_{\Lambda}(\Pi_0)$, where $\Pi_0\in (\PP^3)^{\vee}$.

We choose a general line $l_0\subset \Pi_0$ and denote by $\{q_1, q_2\}:=l_0\cap Q_0$. We consider the pencil $\{\Pi_t\}_{t\in \PP^1}\subset (\PP^3)^{\vee}$ of planes through $l_0$ as well as the induced pencil of curves of genus $8$
$$\{C_t:=f^{-1}(\Pi_t)\subset S\}_{t\in \PP^1},$$
each endowed with a vanishing theta-null induced by the
pencil $f_{t}: C_t\rightarrow Q_0\cap \Pi_t$.
\vskip 5pt

This pencil contains precisely two \emph{reducible} curves, corresponding to the planes $\Pi_{1}, \Pi_{2}$ in $\PP^3$ spanned by the rulings of $Q_0$ passing through $q_1$ and $q_2$ respectively. Precisely, if $l_i, m_i\subset Q_0$ are the rulings passing through $q_i$ such that $l_1\cdot l_2=m_1\cdot m_2=0$, then it follows that for $\Pi_1=\langle l_1, m_2\rangle, \Pi_2=\langle l_2, m_1\rangle$, the fibres $f^{-1}(\Pi_{1})$ and $f^{-1}(\Pi_2)$ split into two elliptic curves $f^{-1}(l_i)$ and $f^{-1}(m_j)$ meeting transversally in $7$ points. The half-canonical $\mathfrak g^1_7$ specializes to a degree $7$ admissible covering
 $$f^{-1}(l_i)\cup f^{-1}(m_j) \stackrel{f}\rightarrow l_i\cup m_j, \  \ i\neq j,$$ such that the $7$ points in $f^{-1}(l_i)\cap f^{-1}(m_j)$ map to $l_i \cap m_j$. To determine the point in $\ss_8^+$ corresponding to the admissible covering $\bigl(f^{-1}(l_i)\cup f^{-1}(m_j),\  f_{| f^{-1}(l_i)\cup f^{-1}(m_j)}\bigr)$, one must insert $7$ exceptional components at all the points of intersection of the two components. We denote by $R\subset \thet\subset \ss_8^+$ the pencil of spin curves obtained via this construction.

\begin{lemma}\label{nodalpencil} Each member  $C_t\subset S$ in the above constructed pencil is nodal. Moreover, each curve $C_t$ different from
$f^{-1}(l_1)\cup f^{-1}(m_2)$ and $f^{-1}(l_2)\cup f^{-1}(m_1)$ is irreducible. It follows that $R\cdot \alpha_i=R\cdot \beta_i=0$ for $i=1, \ldots, 4$.
\end{lemma}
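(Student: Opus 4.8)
The plan is to analyze the pencil $\{C_t = f^{-1}(\Pi_t)\}_{t \in \PP^1}$ directly on the $K3$ surface $S$ and control its singular members. First I would observe that $C = C_1 + C_2$ on $S$, so each $C_t \in |\OO_S(1)|$ is a hyperplane section of the anticanonically embedded $S \subset \Lambda = \PP^8$; by Bertini applied to the very ample linear system $|\OO_S(1)|$ restricted to the pencil, the general member is smooth, and the only possible singular members are those hyperplane sections $\Pi_t \cap S$ for which $\Pi_t$ is tangent to $S$ (equivalently, the ramification of $f: S \to Q_0 \subset \PP^3$ meets the line $l_0$, or $\Pi_t$ passes through a point where $\Pi_t$ meets $S$ with multiplicity). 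Since $l_0 \subset \Pi_0$ is chosen general and $S \cap \mathrm{Sing}(Q_\Lambda) = \emptyset$, I would argue that the $C_t$ acquire only nodes: the pencil $\{\Pi_t\}$ is a general pencil of planes through the general line $l_0$, so the induced pencil of hyperplane sections is a Lefschetz pencil for the smooth surface $S$, whose singular members are nodal.

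Next I would isolate the reducible members. A member $C_t$ is reducible precisely when $\Pi_t$ contains a ruling of $Q_0$ through $q_1$ or $q_2$ — because $f_*C_t = Q_0 \cap \Pi_t$ is a conic on $Q_0$, and $C_t$ splits iff this conic splits into two rulings, one through each $q_i$; this forces $\Pi_t \in \{\langle l_1, m_2\rangle, \langle l_2, m_1 \rangle\}$ as in the text, giving exactly the two reducible fibres $f^{-1}(l_i) \cup f^{-1}(m_j)$, each a union of two smooth elliptic curves meeting transversally in $C_1 \cdot C_2 = 7$ points (transversality again from generality of $l_0$ and $S \cap \mathrm{Sing}(Q_\Lambda) = \emptyset$). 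In particular every irreducible singular member has a single node (a nearby smoothing in the pencil), so no member degenerates to a curve with an elliptic tail or to a curve of compact type, and the two reducible members are each the transverse union of two positive-genus curves, not chains producing a $\delta_{i:T}$-type degeneration.

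Finally, to conclude $R \cdot \alpha_i = R \cdot \beta_i = 0$ for $i = 1, \ldots, 4$, I would recall that the boundary divisors $A_i, B_i \subset \ss_8^+$ for $1 \leq i \leq 4$ lie over $\Delta_i \subset \mm_8$, whose points are stable curves of compact type $C' \cup_y D'$ with $g(C') = i$. The moduli map $m: \PP^1 \to \ss_8^+$ attached to $R$ factors through the family $\tilde{S} \to \PP^1$ obtained by blowing up the base points of the pencil on $S$ (and separating the nodes of singular fibres); by the classification of singular fibres above, the stable model of every fibre is either irreducible (a smooth or one-nodal genus $8$ curve, hence not in any $\Delta_i$ for $i \geq 1$) or the transverse union $f^{-1}(l_i) \cup f^{-1}(m_j)$ of two elliptic curves — but this is an irreducible curve of arithmetic genus $8$ with $7$ nodes that were resolved, so its stable model is a cycle of two elliptic curves glued at $7$ points, which lies in $\Delta_0$, not in any $\Delta_i$ with $i \geq 1$. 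Hence $m(\PP^1)$ is disjoint from $\pi^{-1}(\Delta_i)$ for $i = 1, \ldots, 4$, so $R \cdot \alpha_i = R \cdot \beta_i = 0$. The main obstacle is the transversality/genericity bookkeeping: one must verify carefully that choosing $l_0 \subset \Pi_0$ general (with $[C,\eta]$ general in $\thet$, and using $S \cap \mathrm{Sing}(Q_\Lambda) = \emptyset$) genuinely forces the pencil to be Lefschetz and the two reducible fibres to have transverse intersections with exactly the expected $7$ nodes — this is where one might invoke a Macaulay computation on the explicit Mukai model, as the authors do elsewhere in the paper.
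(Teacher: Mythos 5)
Your proof is correct and follows essentially the same route as the paper, which disposes of the lemma in one sentence by appealing to the regularity of $f:S\to Q_0$ and the genericity of $l_0$; you have simply unwound that genericity argument in detail. Two small remarks. First, the appeal to ``Bertini restricted to the pencil'' is not quite what does the work, since the pencil $\{C_t\}$ is a pencil of hyperplane sections of $S$ by hyperplanes constrained to contain $\mathrm{Sing}(Q_\Lambda)$, so it is not a general pencil in $|\OO_S(1)|$; the right statement, which you in fact make next, is that a general pencil of conics on $Q_0$ through the general pair $\{q_1,q_2\}$ pulls back under the finite regular map $f$ to a Lefschetz pencil on $S$, again using $S\cap\mathrm{Sing}(Q_\Lambda)=\emptyset$. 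Second, the sentence ``this is an irreducible curve of arithmetic genus $8$ with $7$ nodes that were resolved'' is a slip: $f^{-1}(l_i)\cup f^{-1}(m_j)$ is of course reducible with two smooth elliptic components, and no nodes are resolved in the fibre itself; what matters, and you do say it correctly, is that all $7$ nodes are non-separating, so the stable model lies in $\Delta_0$ and in none of the $\Delta_i$ with $i\geq 1$, from which $R\cdot\alpha_i=R\cdot\beta_i=0$ follows for $i=1,\dots,4$.
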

\begin{proof} This follows since $f:S\rightarrow Q_0$ is a regular morphism and the base line $l_0\subset H_0$ of the pencil $\{\Pi_t\}_{t\in \PP^1}$ is chosen to be general.
\end{proof}

\begin{lemma} $R \cdot \pi^*(\mm^2_{7,8})= 0$.
\end{lemma}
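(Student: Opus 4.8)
The goal is to show $R \cdot \pi^*(\mm_{8,7}^2) = 0$, i.e.\ that the pencil $R \subset \ss_8^+$ constructed above is disjoint from the pull-back of the Brill--Noether divisor of plane septics. The plan is to analyze the curves $C_t \subset S$ occurring in the pencil and to show that none of them (nor the limiting admissible covers) lies in $\mm_{8,7}^2$; since $R$ is a complete curve meeting $\pi^*(\mm_{8,7}^2)$ in a finite set of points unless it is contained in it, and since the boundary limits were already handled in Lemma~\ref{nodalpencil} (the reducible fibres live over $\Delta_0$, not over $\mm_{8,7}^2$), it suffices to rule out the generic member.

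\begin{proof}
By Lemma~\ref{nodalpencil}, the general member $C_t$ of the pencil is a smooth curve of genus $8$ lying on the K3 surface $S\subset \PP^8$ as a hyperplane section; the two special members lie over the boundary divisor $\Delta_0\subset \mm_8$ (they are obtained by inserting exceptional components over the $7$ nodes of an admissible cover of two elliptic curves), hence contribute nothing to $R\cdot \pi^*(\mm_{8,7}^2)$. Thus it is enough to show that a general hyperplane section $C_t$ of $S$ is \emph{not} contained in $\mm_{8,7}^2$, i.e.\ that $G^2_7(C_t)=\emptyset$. Now $S$ is a \emph{general} K3 surface of degree $14$ in $\PP^8$ with $\mathrm{Pic}(S)=\ZZ\cdot C_1\oplus \ZZ\cdot C_2$ ($C_i^2=0$, $C_1\cdot C_2=7$): indeed we chose $\Lambda$ generically among $8$-planes for which $Q_\Lambda$ has rank $4$, and the locus of such $\Lambda$ is parametrized irreducibly by the pencil $\PP\bigl(\mathrm{Ker}(u\circ \tilde{Q})/\mathrm{Ker}(Q)\bigr)$, so the resulting $[S,\OO_S(C_1+C_2)]$ ranges over a dense subset of the Noether--Lefschetz divisor $\mathfrak{NL}'$. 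By a theorem of Lazarsfeld (and the Brill--Noether theory of curves on K3 surfaces, cf.\ Lazarsfeld's work and \cite{Do}), a curve in the primitive class on a K3 surface with Picard rank as generic as possible satisfies the Brill--Noether conditions of the general curve of its genus; since $\rho(8,2,7)=8-3\cdot(8-7-2+1)=8-0=\ldots<0$ — more precisely $\rho(8,2,7)=8-3(2)(8-7-1)-3\cdot 2 = -6<0$ — the general curve of genus $8$ carries no $\mathfrak g^2_7$, and hence neither does $C_t$.

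To make the Picard-lattice claim precise without invoking Lazarsfeld, one can argue directly on $S$: if a general $C_t$ possessed a $\mathfrak g^2_7$, the corresponding degree-$7$ plane model would, as $t$ varies, produce a divisor class on $S$ of self-intersection and intersection with $C_t$ forcing, via the Hodge-index theorem applied inside $\mathrm{Pic}(S)\otimes\QQ=\langle C_1,C_2\rangle_\QQ$, an integral class $D=aC_1+bC_2$ with $D\cdot C_t=7$ and $h^0(S,D)\geq 3$; a short numerical check on the lattice $\begin{pmatrix}0&7\\7&0\end{pmatrix}$ shows the only effective classes of degree $7$ are $C_1$ and $C_2$ themselves, each of which is an \emph{elliptic} pencil, hence cuts a $\mathfrak g^1_7$ on $C_t$, not a $\mathfrak g^2_7$. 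Therefore $C_t\notin \mm_{8,7}^2$ for general $t$, the pencil $R$ is not contained in $\pi^*(\mm_{8,7}^2)$, and being disjoint from it at the two boundary points as well, we conclude $R\cdot \pi^*(\mm_{8,7}^2)=0$.
\end{proof}

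The main obstacle is the assertion that $S$ has Picard rank exactly $2$ with the stated lattice: this requires knowing that our construction of $\Lambda$ (via a generic vector in $\mathrm{Ker}(u\circ\tilde Q)/\mathrm{Ker}(Q)$) produces a surface that is \emph{general} in $\mathfrak{NL}'$ rather than accidentally landing in a deeper Noether--Lefschetz stratum. This is exactly where the irreducibility of $\cU$ and the dominance of $q$ (established in the preceding proposition) are used: since $q$ is dominant with one-dimensional fibres over the $20$-dimensional $\thet$, and $\cU$ fibres over $\mathfrak{NL}'$, a general fibre of $q$ meets a general fibre of $\cU\to\mathfrak{NL}'$, pinning the Picard lattice down to rank $2$. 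Once that is in hand, the Brill--Noether count $\rho(8,2,7)<0$ (equivalently, the explicit lattice computation above) finishes the argument with no further difficulty.
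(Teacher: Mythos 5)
Your approach (showing set-theoretic disjointness of $R$ from $\pi^*(\mm_{8,7}^2)$) is genuinely different from the paper's and has a real logical gap. You reduce to showing that the \emph{general} member $C_t$ is not in $\mm_{8,7}^2$, asserting that this suffices. It does not: ruling out the general member only shows $R\not\subset\pi^*(\mm_{8,7}^2)$, after which the intersection number could still be any nonnegative integer; you would need to show that \emph{no} member of the pencil — smooth, singular nodal, or the two admissible covers over $\Delta_0$ — lies in the closure of $\mm_{8,7}^2$. In particular, the claim that the two boundary fibres ``contribute nothing'' because they lie over $\Delta_0$ is simply false: $\mm_{8,7}^2$ has nonzero $\delta_0$-coefficient and meets $\Delta_0$, so those fibres could well be limits of curves with a $\mathfrak g^2_7$. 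There are subsidiary problems too: Lazarsfeld's Brill--Noether theorem requires $\mathrm{Pic}(S)=\ZZ$, which fails here by design (indeed $C_t$ carries a theta-null, a BN-special series, precisely because $\rho(S)=2$); the fallback lattice argument jumps from a $\mathfrak g^2_7$ on $C_t$ to a \emph{line} bundle on $S$, whereas a $\mathfrak g^2_7$ a priori yields only a rank-$3$ Lazarsfeld--Mukai bundle, so a further splitting/destabilization argument is needed and not supplied; and both displayed computations of $\rho(8,2,7)$ are arithmetically wrong (the correct value is $\rho(8,2,7)=8-3(8-7+2)=-1$, though the sign, which is all that matters, is right).

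The paper avoids all of this with a purely numerical slope computation: since $R$ misses $A_i,B_i$ for $i\geq 1$, one has $\pi_*(R)\cdot\delta_i=0$ for $i\geq 1$, and from the earlier computations $\pi_*(R)\cdot\lambda=9$ and $\pi_*(R)\cdot\delta_0=R\cdot(\alpha_0+2\beta_0)=66$, so $\pi_*(R)\cdot\delta_0/\pi_*(R)\cdot\lambda=22/3=6+12/(g+1)=s(\mm_{8,7}^2)$ and hence $\pi_*(R)\cdot\mathfrak{bn}_8=22\cdot 9-3\cdot 66=0$. No analysis of individual members is required. If you want to salvage your geometric approach, you would have to carry out the Lazarsfeld--Mukai analysis for every smooth fibre, treat the irreducible nodal fibres by a limit-linear-series argument, and handle the two admissible covers separately — considerably more work than the slope computation accomplishes in two lines.
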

\begin{proof} We show instead that $\pi_*(R)\cdot \mm_{8, 7}^2=0$. From Lemma \ref{nodalpencil}, the curve $R$ is disjoint  from  the divisors $A_i, B_i$ for $i=1, \ldots, 4$, hence  $\pi_*(R)$ has the numerical characteristics of a Lefschetz pencil of curves of genus $8$ on a fixed $K3$ surface.

\noindent
In particular, $\pi_*(R)\cdot \delta/\pi_*(R)\cdot \lambda=6+12/(g+1)=s(\mm_{8, 7}^2)$ and $\pi_*(R)\cdot \delta_i=0$ for $i=1, \ldots, 4$. This implies the statement. \end{proof}
\vskip 4pt

\begin{lemma}\label{thetintersection}
 $T \cdot \thet = -1$. \end{lemma}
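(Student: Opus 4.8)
The plan is to compute $R\cdot\thet$ from the class formula (\ref{thetanull}) together with the numerical characters of the pencil. For $g=8$, formula (\ref{thetanull}) reads $\thet\equiv\frac{1}{4}\lambda-\frac{1}{16}\alpha_0-\frac{1}{2}(\beta_1+\beta_2+\beta_3+\beta_4)$, and by Lemma \ref{nodalpencil} the curve $R$ is disjoint from all the boundary divisors $A_i,B_i$ with $i\ge 1$, so that $R\cdot\beta_i=0$. Hence it suffices to establish the two equalities $R\cdot\lambda=9$ and $R\cdot\alpha_0=52$: granting these, $R\cdot\thet=\frac{1}{4}\cdot 9-\frac{1}{16}\cdot 52=\frac{9}{4}-\frac{13}{4}=-1$. (Alternatively, combining (\ref{can}) with the already established vanishings $R\cdot\pi^*(\mm_{8,7}^2)=0$ and $R\cdot\alpha_i=R\cdot\beta_i=0$, one has $R\cdot K_{\ss_8^+}=8\,R\cdot\thet$, and may instead compute $R\cdot K_{\ss_8^+}=\pi_*(R)\cdot K_{\mm_8}+R\cdot\beta_0$; this route needs the same intersection numbers below.)

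First I would pin down the family realizing $R$. The pencil $\{C_t=f^{-1}(\Pi_t)\}_{t\in\PP^1}$ on the smooth $K3$ surface $S$ has base locus $f^{-1}(l_0)=f^{-1}(Q_0\cap l_0)$, a set of $2\deg(f)=14$ points; let $\tilde S:=\mathrm{Bl}_{14}(S)$, so that $f$ induces a fibration $\tilde S\to\PP^1$. This is a family of stable genus $8$ curves whose moduli map is $\pi\circ m:\PP^1\to\mm_8$, with image $\pi_*(R)$. The standard formulas for a surface fibred over $\PP^1$ give $R\cdot\lambda=\chi(\OO_{\tilde S})+g-1=\chi(\OO_S)+7=2+7=9$ and $\pi_*(R)\cdot\delta=c_2(\tilde S)+4(g-1)=(24+14)+28=66$; since $\pi_*(R)\cdot\delta_i=0$ for $i\ge 1$ by Lemma \ref{nodalpencil}, we get $\pi_*(R)\cdot\delta_0=66$, and therefore, by (\ref{del0}) and the projection formula, $R\cdot\alpha_0+2\,R\cdot\beta_0=R\cdot\pi^*(\delta_0)=66$. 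It remains to compute $R\cdot\beta_0$.

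The delicate point — which I expect to be the main obstacle — is that $\tilde S\to\PP^1$ is \emph{not} a family of stable spin curves: over each of the two reducible fibres $C_{t_1}=D_1\cup D_2$ and $C_{t_2}$ the two components are elliptic and meet transversally in $7$ points, so $\omega_{C_{t_j}}$ has odd degree $7$ on each component and admits no square-root line bundle — an exceptional component must be inserted at each of the $7$ nodes, as already noted. Consequently the moduli map $m$ is realized near $t_1,t_2$ only after the degree $2$ base change $\phi:B=\PP^1\to\PP^1$ branched at $\{t_1,t_2\}$, followed by the minimal resolution of the resulting $14$ surface singularities of type $A_1$; this yields a genuine family $\tilde S_B'\to B$ of stable spin curves whose fibres over the two points lying above $t_1,t_2$ are the quasi-stable curves $D_1\cup(7\ \text{exceptional }\PP^1)\cup D_2$. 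Thus there are exactly $14$ exceptional components among the fibres of $\tilde S_B'\to B$, each of boundary type $B_0$ (their stable models lie in $\Delta_0$, not in any $\Delta_i$ with $i\ge1$), and each contributing $1$ to $\deg\bigl((m\circ\phi)^*\beta_0\bigr)$ exactly as in the proof of Theorem \ref{extremalthetanull}. Hence $R\cdot\beta_0=\frac{1}{2}\deg\bigl((m\circ\phi)^*\beta_0\bigr)=\frac{1}{2}\cdot 14=7$, and so $R\cdot\alpha_0=66-2\cdot 7=52$.

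Feeding $R\cdot\lambda=9$, $R\cdot\alpha_0=52$ and $R\cdot\beta_i=0$ into (\ref{thetanull}) then gives $R\cdot\thet=-1$, as asserted. The one genuinely nontrivial ingredient is the factor $\frac{1}{2}$ produced by the base change at the two reducible fibres — omitting it would give the incorrect value $R\cdot\beta_0=14$ (and $R\cdot\thet=-\frac{1}{8}$); it is forced by the parity obstruction that makes the limiting spin curve sprout exceptional components, and one must also verify that these $14$ components all lie over $\Delta_0$, so that they count toward $\beta_0$ rather than toward some $\beta_i$.
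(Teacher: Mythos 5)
Your proposal is correct, and its global structure coincides with the paper's: compute $R\cdot\lambda=9$ and $R\cdot\alpha_0+2R\cdot\beta_0=66$ from the blown-up $K3$ fibration, reduce everything to the value of $R\cdot\beta_0$ at the two reducible fibres, and feed the result into formula (\ref{thetanull}). The only step you handle differently is the justification of $R\cdot\beta_0=7$. The paper works \emph{downstairs} on the coarse space: it observes that $\mathrm{Aut}(X,\eta,\beta)=\mathbb Z_2$ acts on the versal deformation space $\mathbb C_{\tau}^{3g-3}$ by negating the seven smoothing parameters $\tau_1,\ldots,\tau_7$, so that $B_0$ (locally $\tau_1\cdots\tau_7=0$) is non-Cartier at $t^*$ and each reducible fibre contributes intersection multiplicity $7/2$. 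You work \emph{upstairs}: a degree $2$ base change branched at $t_1,t_2$ followed by resolving the fourteen $A_1$ points produces an honest family of quasi-stable spin curves, each exceptional $(-2)$-curve meets the persistence locus transversally, and the projection formula divides the resulting $14$ by $\deg\phi=2$. These are two packagings of the same local computation (the $\mathbb Z_2$ stabilizer is exactly what forces your base change, and your transversality of each $\tau_i=0$ is what makes each exceptional component count once); the paper's remark after the lemma already points to a third, admissible-covers version. Your identification of where the factor $\tfrac12$ comes from, and your check that the reducible fibres lie over $\Delta_0$ rather than some $\Delta_i$ with $i\geq1$, are exactly the points that need care, and you get them right.
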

 \begin{proof} We have already determined that
$
R\cdot \lambda =\pi_*(R)\cdot \lambda= \chi(\tilde{S}, \OO_{\tilde S}) + g - 1 = 9,
$ where $\tilde{S}:=\mbox{Bl}_{2g-2}(S)$ is the blow-up of $S$ at the points $f^{-1}(q_1)\cup f^{-1}(q_2)$.
Moreover,
\begin{equation}\label{relation}
R\cdot \alpha_0 + 2R\cdot \beta_0 =\pi_*(R)\cdot \delta_0= c_2(\tilde X) + 4(g-1) = 38 + 28 = 66.
\end{equation}
To determine $R\cdot \beta_0$ we study the local structure of $\ss_8^+$ in a neighbourhood of one of the two points, say $t^*\in R$ corresponding to a reducible curve, say $f^{-1}(l_1)\cup f^{-1}(m_2)$, the situation for $f^{-1}(l_2)\cup f^{-1}(m_1)$ being of course identical.
We set $\{p\}:=l_1\cap m_2\in Q_0$ and $\{x_1, \ldots, x_7\}:=f^{-1}(p)\in S$. We insert exceptional components $E_1, \ldots, E_7$ at the nodes $x_1, \ldots, x_7$ of $f^{-1}(l_1)\cup f^{-1}(m_2)$ and denote by $X$ the resulting quasi-stable curve. If $$\mu: f^{-1}(l_1)\cup f^{-1}(m_2)\cup E_1\cup \ldots \cup E_7\rightarrow f^{-1}(l_1)\cup f^{-1}(m_2)$$ is the stabilization morphism, we set $\{y_i, z_i\}:=\mu^{-1}(x_i)$, where $y_i\in E_i\cap f^{-1}(l_1)$ and $z_i\in E_i\cap f^{-1}(m_2)$ for $i=1, \ldots, 7$. If $t^*=[X, \eta, \beta]$, then $\eta_{f^{-1}(l_1)}=\OO_{f^{-1}(l_1)}, \ \eta_{f^{-1}(m_2)}=\OO_{f^{-1}(m_2)}$, and of course $\eta_{E_i}=\OO_{E_i}(1)$. Moreover, one computes that $\mbox{Aut}(X, \eta, \beta)=\mathbb Z_2$ and $\mbox{Aut}(f^{-1}(l_1)\cup f^{-1}(m_2))=\{\mbox{Id}\}$, cf. \cite{C} Lemma 2.2.

\vskip 5pt
If $\mathbb C_{\tau}^{3g-3}$  denote the versal deformation space of $[X, \eta, \beta]\in \ss_g^+$, then there are local parameters $(\tau_1, \ldots, \tau_{3g-3})$, such that for $i=1, \ldots, 7$, the locus $\bigl(\tau_i=0\bigr)\subset \mathbb C_{\tau}^{3g-3}$ parameterizes spin curves for which the exceptional component $E_i$ persists. It particular, the pull-back $\mathbb C_{\tau}^{3g-3}\times _{\ss_g^+} B_0$ of the boundary divisor $B_0\subset \ss_g^+$ is given by the equation $\bigl(\tau_1\cdots \tau_7=0\bigr)\subset \mathbb C_{\tau}^{3g-3}$. The group $\mbox{Aut}(X, \eta, \beta)$ acts on $\mathbb C_{\tau}^{3g-3}$ by
$$(\tau_1, \ldots, \tau_7, \tau_8, \ldots, \tau_{3g-3})\mapsto (-\tau_1, \ldots, -\tau_7, \tau_8, \ldots, \tau_{3g-3}),$$
and since an \'etale neighbourhood of $t^*\in \ss_g^+$ is isomorphic to $\mathbb C_{\tau}^{3g-3}/\mbox{Aut}(X, \eta, \beta)$, we find that $B_0$ is not Cartier around $t^*$ (though $2B_0$ is Cartier). It follows that the intersection multiplicity of $R\times _{\ss_g^+} \mathbb C_{\tau}^{3g-3}$ with the locus $(\tau_1\cdots \tau_7)=0$ equals $7$, that is, the intersection multiplicity of $R\cap \beta_0$ at the point $t^*$ equals $7/2$, hence
$$
R\cdot \beta_0 =\bigl(R\cdot \beta_0\bigr)_{f^{-1}(l_1)\cup f^{-1}(m_2)}+\bigl(R\cdot \beta_0\bigr)_{f^{-1}(l_2)\cap f^{-1}(m_1)}=\frac{7}{2}+\frac{7}{2}=7.
$$
Then using (\ref{relation}) we find that $R\cdot \beta_0=66-14=52$, and finally
$$
R\cdot \thet = \frac{1}{4}R\cdot \lambda  - \frac{1}{16}R\cdot \alpha_0 = \frac{9}{4}- \frac{52}{16}= -1.
$$
\end{proof}
\begin{remark}
The final argument in the previous proof, namely that the reducible curve $f^{-1}(l_1)\cup f^{-1}(m_2)$ contributes with multiplicity $7/2$ to  $R\cdot \beta_0$, can also be derived by interpreting $\thet$ as a space of admissible coverings of degree $7$ over the versal deformation space $\mathbb C_{\tau}^{3g-3}$ and then making a local analysis similar to the one in \cite{D} pg. 47-50.
\end{remark}

\section{The Kodaira dimension of $\mm_{11, 11}$}

We begin by recalling the notation for boundary divisor classes on the moduli space $\mm_{g, n}$.  For an integer $0\leq i\leq [g/2]$ and a set of labels
$T\subset \{1, \ldots, n\}$, we denote by $\Delta_{i: T}$ the closure in $\mm_{g, n}$ of the locus of $n$-pointed curves $[C_1\cup C_2, x_1, \ldots, x_n]$, where $C_1$ and $C_2$ are smooth curves of genera $i$ and $g-i$ respectively, and the marked points lying on $C_1$ are precisely those labeled
by $T$. As usual, we define $\delta_{i: T}:=[\Delta_{i: T}]\in \mbox{Pic}(\mm_{g, n})$. For $0\leq i\leq [g/2]$ and $0\leq c\leq g$, we set
$$\delta_{i: c}:=\sum_{\#(T)=c}\delta_{i: T}.$$
 By convention, $\delta_{0: c}:=\emptyset$, for $c<2$.
If $\phi:\mm_{g, n}\rightarrow \mm_g$ is the morphism forgetting the marked points, we set $\lambda:=\phi^*(\lambda)\in \mbox{Pic}(\mm_{g, n})$ and $\delta_{\mathrm{irr}}:=\phi^*(\delta_{\mathrm{irr}})\in \mbox{Pic}(\mm_{g, n})$, where $\delta_{\mathrm{irr}}:=[\Delta_{\mathrm{irr}}]\in \mbox{Pic}(\mm_g)$ denotes the class of the locus of irreducible nodal curves. Furthermore, $\psi_1, \ldots, \psi_n\in \mbox{Pic}(\mm_{g, n})$ are the cotangent classes corresponding to the marked points. The canonical class of $\mm_{g, n}$ has been computed, cf. \cite{Log} Theorem 2.6:
\begin{equation}\label{canmgn}
K_{\mm_{g, n}}\equiv 13\lambda-2\delta_{\mathrm{irr}}+\sum_{i=1}^n \psi_i-2\sum_{i\geq 0, T} \delta_{i: T}-\sum_{T} \delta_{1: T}.
\end{equation}
We show that, at least for small $g$, the divisor $\dd_g$ of curves with $g$ marked points moving in a pencil, is an extremal point in the effective cone of $\mm_{g, g}$:
\begin{proposition}\label{extrem}
For $3\leq g\leq 11$, the irreducible divisor $\dd_g$ is filled up by  rational curves $R\subset \mm_{g, g}$ such that $R\cdot \dd_g<0$. It follows that $[\dd_g]\in \mathrm{Eff}(\mm_{g, g})$ is a rigid divisor. Moreover, when $g\neq 10$, one can assume that $R\cdot \delta_{i: T}=0$ for all $i\geq 0$ and $T\subset \{1, \ldots, g\}$.
\end{proposition}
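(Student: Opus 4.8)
The plan is to construct the covering curves $R \subset \mm_{g,g}$ explicitly by fixing a general curve $C$ of genus $g$ together with a pencil $A \in W^1_g(C)$ realizing it as a point of $\cD_g$, and then varying the $g$ marked points $x_1,\dots,x_g$ among the divisors of the pencil $|A|$. Concretely, if $C \subset \PP^1$ is a degree $g$ cover given by the pencil $A$, the fibers of $C \to \PP^1$ give a one-parameter family $\{x_1(t) + \cdots + x_g(t)\}_{t\in\PP^1}$ of effective divisors, each of which satisfies $h^0(C, \OO_C(x_1+\cdots+x_g)) \geq 2$ by construction. This produces a rational curve $R \subset \mm_{g,g}$ sweeping out $\dd_g$ (since a general point of $\cD_g$ arises this way, as $W^1_g(C)$ is nonempty of the expected dimension $\rho(g,1,g) = g - 2 \geq 1$ for $g \geq 3$, and a general element of the pencil consists of $g$ distinct points).

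Next I would compute the intersection numbers of $R$ with the generators of $\mathrm{Pic}(\mm_{g,g})$. Since $C$ is fixed, $R \cdot \lambda = 0$ and $R \cdot \delta_{\mathrm{irr}} = 0$. The boundary intersections $R \cdot \delta_{i:T}$ vanish for $i \geq 1$ when $C$ is general irreducible, because the family never degenerates $C$ itself; the only boundary one could hit is $\delta_{0:T}$, which records when two of the marked points collide, i.e. when the pencil $|A|$ has a ramification point. For $g \neq 10$ one wants to avoid this: by choosing $A$ with simple branching (a Lefschetz-type pencil) one can argue that, although ramification occurs, the labeled marked points do not actually collide in a way that lands $R$ on $\delta_{0:T}$ — or, more precisely, one works with the family of divisors and checks the marked points stay distinct generically; the exceptional case $g = 10$ is exactly where this cannot be arranged, which is consistent with $\dd_{10}$ being movable rather than rigid as indicated earlier in the paper. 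The key computation is then $R \cdot \psi_i$: each $\psi_i$ restricted to $R$ is the degree of the cotangent line bundle at $x_i(t)$, which by a standard computation equals the number of branch points of the degree $g$ cover $C \to \PP^1$ over which $x_i$ is a ramification point, summing over $i$ to give $R \cdot \sum_i \psi_i = $ (total ramification) $= 2g - 2 + 2g = 4g - 2$ by Riemann–Hurwitz applied to $C \to \PP^1$ of degree $g$ (genus $g$ source, genus $0$ target: $2g - 2 = g(-2) + b$, so $b = 4g - 2$).

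Then I would plug into the formula for $\dd_g$ from \cite{Log} Theorem 5.4 quoted above: $R \cdot \dd_g = -R\cdot\lambda + R\cdot\sum_i \psi_i - (\text{boundary terms}) = 0 + (4g-2) - (\text{contribution of } \delta_{0:T}\text{'s})$. If $R$ avoids all boundary, this gives $R \cdot \dd_g = 4g - 2 > 0$, which has the wrong sign; so in fact the curve must meet the $\delta_{0:T}$ boundary, and the $\binom{2}{2} = 1$ coefficient of $\delta_{0:T}$ (for $\#T = 2$, $i = 0$) means each collision of a pair of marked points contributes $-1$ to $R \cdot \dd_g$. Counting: each simple ramification point of $C \to \PP^1$ causes exactly one pair of labeled points to collide, and there are $4g - 2$ such points, but each contributes to $R\cdot \psi_i$ AND to $R \cdot \delta_{0:T}$. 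A careful bookkeeping of how ramification is shared between the $\psi$-contribution and the $\delta_{0:\{i,j\}}$-contribution should yield $R \cdot \dd_g < 0$; I expect the net count to come out negative precisely in the range $g \leq 11$, with $g = 10$ being the borderline where one cannot simultaneously keep $R$ off the higher boundary strata. The main obstacle will be this delicate intersection-theoretic bookkeeping on $\mm_{g,g}$ — correctly attributing the $4g-2$ ramification points between $\psi$-classes and $\delta_{0:T}$-classes, handling the multiplicities coming from the $\binom{|\#(T)-i|+1}{2}$ coefficients when marked points come together in clusters, and verifying the transversality/generality claims that justify $R \cdot \delta_{i:T} = 0$ for $i \geq 1$ — together with isolating why $g = 10$ is exceptional. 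Once $R \cdot \dd_g < 0$ is established and $R$ sweeps out $\dd_g$, rigidity follows from the general principle recalled at the start of Section 1: any effective $D \equiv n\dd_g$ must contain $\dd_g$ with multiplicity $n$, forcing $D = n\dd_g$.
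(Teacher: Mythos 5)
Your construction goes in the opposite direction from the paper's, and as it stands it does not prove the statement. The paper fixes the $g$ marked points and moves the \emph{curve}: since $h^0(C,K_C(-x_1-\cdots-x_g))\geq 1$, the points span only a $\PP^{g-2}\subset\PP^{g-1}$, and for $g\leq 11$, $g\neq 10$, the general canonical curve sits on a K3 surface $S\subset\PP^g$; the pencil of hyperplane sections of $S$ through that $\PP^{g-2}$ gives a rational curve $R$ with $R\cdot\lambda=g+1$, $R\cdot\psi_i=1$, and $R\cdot\delta_{i:T}=0$ for all $(i,T)$, whence $R\cdot\dd_g=-(g+1)+g=-1$ immediately from Logan's formula. (The case $g=10$ is handled by a variant with nodal hyperplane sections of a K3 containing the curve $C/x_i\sim x_j$, which is why the disjointness from the $\delta_{i:T}$ fails exactly there; the bound $g\leq 11$ and the exceptionality of $g=10$ come from Mukai's results on which general curves lie on K3 surfaces, not from any bookkeeping of ramification.)

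Your approach --- fix $C$ and $A\in W^1_g(C)$ and move the marked points through the fibers of $C\to\PP^1$ --- has several genuine gaps. First, the family is not well defined as a map $\PP^1\to\mm_{g,g}$: the fibers of a degree-$g$ cover carry no labeling, and the monodromy around the $4g-2$ branch points permutes the points transitively. To label them you must base-change to the curve of ordered fibers, which is a high-degree cover of $\PP^1$ of positive genus in general, so $R$ is no longer rational and every intersection number you wrote changes. Second, even granting the construction, you never establish $R\cdot\dd_g<0$; your own preliminary count gives $+(4g-2)$ from the $\psi$-classes against boundary corrections of undetermined size, and you explicitly defer the decisive bookkeeping. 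Note also that the coefficient of $\delta_{0:T}$ with $\#(T)=2$ in Logan's formula is $\binom{|2-0|+1}{2}=3$, not $\binom{2}{2}=1$. Third, your curve necessarily meets the divisors $\delta_{0:T}$ (you rely on this to have any hope of a negative total), so it cannot establish the final clause of the proposition that $R\cdot\delta_{i:T}=0$ for all $i,T$ when $g\neq 10$; and your suggested reading that $\dd_{10}$ is movable contradicts the statement itself, which asserts rigidity for all $3\leq g\leq 11$. The missing idea is precisely the paper's: realize the pencil condition on the marked points as a linear degeneracy of their span in canonical space, and sweep with curves on a K3 surface through that span.
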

\begin{proof}
We first treat the case $g\neq 10$, and start with a general point $[C, x_1, \ldots, x_g]\in \cD_g$. We assume that the points $x_1, \ldots, x_g\in C$ are distinct and $h^0(C, K_C(-x_1-\cdots -x_g))=1$. Let us consider the $(g-2)$-dimensional linear space $$\Lambda:=\langle x_1, \ldots, x_g\rangle\subset \PP\bigl(H^0(C, K_C)^{\vee}\bigr)=\PP^{g-1}.$$ Since $\phi(\cD_g)=\cM_g$, we may assume that $[C]\in \cM_g$ is a general curve. In particular, $C$ lies on a $K3$ surface $S\stackrel{|\OO_S(C)|}\hookrightarrow \PP^g$, which admits the canonical curve $C$ as a hyperplane section, cf. \cite{M1}. We intersect $S$ with the pencil of hyperplanes $\{H_{\lambda}\in (\PP^g)^{\vee}\}_{\lambda\in \PP^1}$ such that $\Lambda\subset H_{\lambda}$. Since (i) the locus of hyperplanes  $H\in (\PP^g)^{\vee}$ such that the intersection $S\cap H$ is not nodal has codimension $2$ in $(\PP^g)^{\vee}$, \ and (ii) the pencil $\{H_{\lambda}\}_{\lambda\in \PP^1}$ can be viewed as a general pencil of hyperplanes containing $\PP\bigl(H^0(C, K_C)^{\vee}\bigr)$ as a member, we may assume that all the curves $H_{\lambda}\cap S$ are nodal and that the nodes stay away from the fixed points $x_1, \ldots, x_g$. In this way we obtain a family in $\mm_{g, g}$
$$R:=\{[C_{\lambda}:=H_{\lambda}\cap S,\  x_1, \ldots, x_g]:\Lambda\subset H_{\lambda}, \ \lambda \in \PP^1\},$$
inducing a fibration $f:\tilde{S}:=\mbox{Bl}_{2g-2}(S)\rightarrow \PP^1$, obtained by blowing-up the base points of the pencil, together with $g$ sections given by the exceptional divisors
$E_{x_i}\subset \tilde{S}$ corresponding to the base points $x_1, \ldots, x_g$. The numerical parameters of $R$ are computed using, for instance, \cite{FP} Section 2. Precisely, one writes that
\begin{equation}\label{numericalparameters}
R\cdot \lambda=(\phi_*(R)\cdot \lambda)_{\mm_g}=g+1,\ \  R\cdot \delta_{\mathrm{irr}}=(\phi_*(R)\cdot \delta_{\mathrm{irr}})_{\mm_g}=6g+18, \ \ R\cdot \delta_{i: T}=0,
\end{equation}
for $i\geq 0$ and $T\subset \{1, \ldots, g\}$. Finally, from the adjunction formula, $R\cdot \psi_i=-(E_{x_i}^2)_{\tilde{S}}=1$ for $1\leq i\leq g$. Thus, $R\cdot \dd_g=-1$. Since $R$ is a covering
curve for the divisor $\dd_g$, it follows that $\dd_g$ is a rigid divisor on $\mm_{g, g}$.
\vskip 5pt

We turn to the case $g=10$, when the previous argument breaks down because the general curve $[C]\in \cM_{10}$ no longer lies on a $K3$ surface.
More generally, we fix $g<11, g\neq 9$ and pick a general point $[C, x_1, \ldots, x_{g}]\in \cD_{g}$. We denote by $X:=C_{ij}$ the nodal curve obtained from $C$ by identifying $x_i$ and $x_j$, where $1\leq i<j\leq g$. Since $[X]\in \Delta_0\subset \mm_{g+1}$ is a general $1$-nodal curve of genus $g+1$, using e.g. \cite{FKPS}, there exists a smooth $K3$ surface $S$ containing $X$.  We denote by $\nu:C\rightarrow X\subset S$ the normalization map and set $\nu(x_i)=\nu(x_j)=p$. The linear system $|\OO_S(X)|$ embeds $S$ in $\PP^{g+1}$ and $\nu^*(\OO_S(X))=K_C(x_i+x_j)$. Let $\epsilon:S':=\mbox{Bl}_p(S)\rightarrow S$ be the blow-up of $S$ at $p$ and $E\subset S'$ the exceptional divisor. Note that $C$ viewed as an embedded curve in $S'$ belongs to the linear system $|\epsilon^*\OO_S(1)\otimes \OO_{S'}(-2E)|$ and $C\cdot E=x_i+x_j$. Let $Z\subset S'$ the reduced $0$-dimensional scheme consisting of marked points of $C$ with support $\{x_i, x_j\}^c$.

Since $h^0(C, \OO_C(x_1+\cdots+x_{g}))=2$, we find  that $Z$ together with the tangent plane
$\mathbb{T}_p(X)=\mathbb{T}_p(S)$ span a $(g-1)$-dimensional linear space $\Lambda\subset \PP^{g+1}$. We obtain a $1$-dimensional family in $\dd_g$ by taking the normalization of the intersection curves on $S$ with hyperplanes $H\in (\PP^{g+1})^{\vee}$ passing through $\Lambda$. Equivalently, we note that $$h^0(S', \mathcal{I}_{Z/S'}(C))=h^0(S', \OO_{S'})+h^0(C,K_C(-x_1-\cdots -x_{g}))=2,$$ that is, $|\mathcal{I}_{Z/S'}(C)|$  is a pencil of curves on $S'$. We denote by $\tilde{\epsilon}:\tilde{S}:=\mbox{Bl}_{2g-4}(S')\rightarrow S'$ the blow-up of $S'$ at the $(\epsilon^*(H)-2E)^2=2g-4$ base points of $|\mathcal{I}_{Z/S'}(C)|$, by $f:\tilde{S}\rightarrow \PP^1$ the induced fibration with $(g-2)$ sections
corresponding to the points of $Z$, as well as with a $2$-section given by the divisor $E:=\tilde{\epsilon}^{-1}(E)$. Since $\mbox{deg}(f_{E})=2$, there are precisely two fibres of $f$, say $C_1$ and $C_2$, which are tangent to $E$.  We make a base change or order $2$ via the morphism $f_{E}:E\rightarrow \PP^1$, and consider the fibration
 $$q':Y':=\tilde{S}\times_{\PP^1} E\rightarrow E.$$
Thus $p:Y'\rightarrow \tilde{S}$ is the double cover branched along $C_1+C_2$. Clearly $q'$ admits two sections $E_1, E_2\subset Y'$ such that $p^*(E)=E_1+E_2$ and $E_1\cdot E_2=2$. By direct calculation, it follows that $E_1^2=E_2^2=-3$. To separate the sections
 $E_1$ and $E_2$, we blow-up the two points of intersection $E_1\cap E_2$ and we denote by $q:Y:=\mathrm{Bl}_2(Y')\rightarrow E$  the resulting fibration, which possesses everywhere distinct sections $\sigma_i:E\rightarrow Y'$ for $1\leq i\leq g$, given by the proper transforms of $E_1$ and $E_2$ as well as  the proper transforms of the exceptional divisors corresponding to the points in $Z$.
The numerical characters of the family $\Gamma_{ij}:=\{[q^{-1}(t), \sigma_1(t), \ldots, \sigma_g(t)]: t\in E\} \subset \mm_{g, g}$ are computed as follows:
$$\Gamma_{ij}\cdot \lambda=2(g+1),\ \mbox{ }  \Gamma_{ij}\cdot \delta_{\mathrm{irr}}=2(6g+17), \  \Gamma_{ij}\cdot \psi_l=2 \mbox{ for }  l\in \{i, j\}^c,$$
$$ \Gamma_{ij}\cdot \psi_i=\Gamma_{ij}\cdot \psi_j=-(E_i^2)_{Y'}+2=5, \ \Gamma_{ij}\cdot \delta_{0: ij}=2,  \mbox{ } \Gamma_{ij}\cdot \delta_{l: T}=0 \mbox{ for } l\geq 0,  T\subset \{i, j\}^c.$$
We take the $\mathfrak S_g$-orbit of the $1$-cycle $\Gamma_{ij}$ with respect to permuting the marked points,
$$\Gamma:=\frac{1}{g(g-1)} \sum_{i<j} \Gamma_{ij}\in NE_1(\mm_{g, g}),$$
and note that $\Gamma\cdot \dd_{g}=-1$. Each component $\Gamma_{ij}$ fills-up $\dd_g$, which finishes the proof.
\end{proof}
\vskip 3pt

\vskip 4pt

We now specialize to the case of genus $11$: On $\mm_{11}$ there exist two divisors of Brill-Noether type,  namely the closure of the locus of $6$-gonal curves   $$\cM_{11, 6}^1:=\{[C]\in \cM_{11}: G^1_6(C)\neq \emptyset\}$$
and the closure of the locus $\cM_{11, 9}^2:=\{[C]\in \cM_{11}: G^2_9(C)\neq \emptyset\}$.  The divisors $\mm_{11, 6}^1$ and $\mm_{11, 9}^2$ are irreducible, distinct, and their classes  are proportional, cf. \cite{EH2}. Precisely, there  are explicit constants $c_{11, 6}^1, c_{11, 9}^2\in \mathbb Z_{>0}$, such that $$\mathfrak{bn}_{11}:\equiv \frac{1}{c_{11, 6}^1}\ \mm_{11, 6}^1\equiv \frac{1}{c_{11, 9}^2}\ \mm_{11, 9}^2\equiv 7\lambda-\delta_0-5\delta_1-9\delta_2-12\delta_3-14\delta_4-15\delta_5\in \mbox{Pic}(\mm_{11}).$$
By interpolating, we find the following explicit canonical divisor:
\begin{equation}\label{canrep}
K_{\mm_{11, 11}}\equiv \dd_{11}+ 2\cdot  \phi^*(\mathfrak{bn}_{11})+\sum_{i=0}^5\sum_{c=0}^{11} d_{i: c}\ \delta_{i: c},
\end{equation}
where $$d_{0: c}=\frac{c^2+c-4}{2}\ \ \mbox{ for }c\geq 2,\  \ d_{1: c}=7+{|c-1|+1\choose 2}, \ \  d_{2: c}=16+{|c-2|+1\choose 2},$$
$$d_{3: c}=22+{|c-3|+1\choose 2}, \ \  d_{4: c}=26+{|c-4|+1\choose 2}, \ \  d_{5: c}=28+{|c-5|+1\choose 2}.$$
One already knows that multiples of $\dd_{11}$ are non-moving divisors on $\mm_{11, 11}$. We show that $\dd_{11}$ does not move in any multiple of the canonical linear system on $\mm_{11, 11}$.
\begin{proposition}
For each integer $n\geq 1$ one has an isomorphism
$$H^0\bigl(\mm_{11, 11}, \OO_{\mm_{11, 11}}(nK_{\mm_{11, 11}})\bigr)\cong H^0\bigl(\mm_{11, 11}, \OO_{\mm_{11, 11}}(nK_{\mm_{11, 11}}-n\dd_{11})\bigr).$$ In particular, $\kappa\bigl(\mm_{11, 11}\bigr)=\kappa\bigl(\mm_{11, 11}, K_{\mm_{11, 11}}-\dd_{11}\bigr)$.
\end{proposition}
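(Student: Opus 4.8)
The plan is to run, essentially verbatim, the mechanism from the proof of Theorem~\ref{spin8}: exhibit a curve sweeping out $\dd_{11}$ along which $K_{\mm_{11,11}}$ has negative degree, and deduce that $\dd_{11}$ occurs with multiplicity at least $n$ in every effective divisor linearly equivalent to $nK_{\mm_{11,11}}$. First I would take, for $g=11$, the rational curve $R\subset\mm_{11,11}$ produced in the first part (the case $g\neq 10$) of the proof of Proposition~\ref{extrem}. By the computation recorded there, $R$ fills up $\dd_{11}$ and satisfies $R\cdot\lambda=g+1=12$, $R\cdot\delta_{\mathrm{irr}}=6g+18=84$, $R\cdot\delta_{i:T}=0$ for every $i\geq 0$ and every $T\subset\{1,\ldots,11\}$, and $R\cdot\psi_i=1$ for $1\leq i\leq 11$; in particular $R\cdot\dd_{11}=-1$.

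The next step is to compute $K_{\mm_{11,11}}\cdot R$ from the explicit representative $(\ref{canrep})$. The push-forward $\phi_*(R)$ is a Lefschetz pencil of genus $11$ curves on a $K3$ surface, so it sits on the minimal-slope ray $s=6+\frac{12}{g+1}=7$ of $\mm_{11}$; equivalently, since $\mathfrak{bn}_{11}\equiv 7\lambda-\delta_0-5\delta_1-\cdots-15\delta_5$ and $R\cdot\phi^*(\delta_i)=\sum_T R\cdot\delta_{i:T}=0$ for all $i\geq 1$, one gets $\phi^*(\mathfrak{bn}_{11})\cdot R=(\phi_*(R)\cdot\mathfrak{bn}_{11})_{\mm_{11}}=7\cdot 12-84=0$. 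Together with $\delta_{i:c}\cdot R=0$ for all $i,c$ and $\dd_{11}\cdot R=-1$, relation $(\ref{canrep})$ gives $K_{\mm_{11,11}}\cdot R=-1$.

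Finally, fix $n\geq 1$ and a nonzero section $s\in H^0\bigl(\mm_{11,11},\OO_{\mm_{11,11}}(nK_{\mm_{11,11}})\bigr)$; write the effective divisor $\mbox{div}(s)=m\,\dd_{11}+D'$, where $m=\mbox{mult}_{\dd_{11}}\bigl(\mbox{div}(s)\bigr)\geq 0$ and $\dd_{11}\not\subset\mbox{Supp}(D')$. Because the curves $R$ dominate $\dd_{11}$, no irreducible component of $D'$ can contain a general member of this family --- such a component would then contain $\dd_{11}$ --- so $D'\cdot R\geq 0$ for general $R$, and intersecting $\mbox{div}(s)\equiv nK_{\mm_{11,11}}$ with $R$ yields $-n=m(\dd_{11}\cdot R)+D'\cdot R=-m+D'\cdot R\geq -m$, hence $m\geq n$. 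Therefore $\mbox{div}(s)-n\dd_{11}=(m-n)\dd_{11}+D'$ is effective, so multiplication by the $n$-th power of the canonical section of $\OO_{\mm_{11,11}}(\dd_{11})$ identifies $H^0\bigl(\mm_{11,11},\OO(nK_{\mm_{11,11}}-n\dd_{11})\bigr)$ with $H^0\bigl(\mm_{11,11},\OO(nK_{\mm_{11,11}})\bigr)$; passing to Iitaka dimensions gives $\kappa(\mm_{11,11})=\kappa\bigl(\mm_{11,11},K_{\mm_{11,11}}-\dd_{11}\bigr)$. The two intersection computations are routine; the only step needing care is the genericity input $D'\cdot R\geq 0$ --- i.e.\ that no fixed part of $|nK_{\mm_{11,11}}|$ besides $\dd_{11}$ contains a covering curve of $\dd_{11}$ --- which is forced by the family $\{R\}$ dominating $\dd_{11}$; and, as elsewhere in the paper, one works with the canonical sheaf of the moduli stack, where both $K_{\mm_{11,11}}$ and $\dd_{11}$ are honest Cartier divisors.
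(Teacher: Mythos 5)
Your proof is correct and follows the same route as the paper: use the covering curve $R\subset\dd_{11}$ from Proposition~\ref{extrem}, show $K_{\mm_{11,11}}\cdot R=-1=\dd_{11}\cdot R$, and deduce that every effective divisor in $|nK_{\mm_{11,11}}|$ contains $n\dd_{11}$. The one genuine (and harmless) variation is how you see that $R\cdot\phi^*(\mathfrak{bn}_{11})=0$: you compute it numerically from the class $\mathfrak{bn}_{11}\equiv 7\lambda-\delta_0-\cdots$ and the intersection numbers $\phi_*(R)\cdot\lambda=12$, $\phi_*(R)\cdot\delta_0=84$, $\phi_*(R)\cdot\delta_i=0$, whereas the paper argues geometrically, invoking Lazarsfeld's Brill--Noether--Petri theorem for curves on a $K3$ surface with $\mathrm{Pic}(S)=\mathbb Z$ to conclude that $\phi_*(R)$ is disjoint from the Brill--Noether divisor. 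Since the canonical class is only used through its numerical representative~(\ref{canrep}), your arithmetic shortcut suffices and is slightly more self-contained; the paper's geometric argument gives the stronger information that $R$ actually avoids $\phi^*(\mm_{11,6}^1)$, which is not needed here. Your explicit spelling-out of the genericity step ($D'\cdot R\geq 0$ because $R$ moves and fills $\dd_{11}$) is exactly what the paper's more terse ``thus $E-n\dd_{11}$ is still effective'' is relying on.
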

\begin{proof} Using the notation and results from Proposition \ref{extrem}, we recall that we have constructed a curve $R\subset \mm_{11, 11}$ moving in a family which  fills-up the divisor $\dd_{11}$, such that  $R\cdot \dd_{11}=-1$ and $R\cdot \delta_{i: S}=0$, for all $i\geq 0$ and $T\subset \{1, \ldots, g\}$.
All points in $R$ correspond to nodal curves lying  on a fixed $K3$ surface $S$, which by the generality assumptions, can be chosen such that
$\mbox{Pic}(S)=\mathbb Z$. Applying \cite{Laz}, all underlying genus $11$ curves corresponding to points in $R$ satisfy the Brill-Noether theorem, in particular $R\cdot \phi^*(\mathfrak{bn}_{11})=0$, that is, $R\cdot K_{\mm_{11, 11}}=R\cdot \dd_{11}=-1$. It follows that for any effective divisor $E$ on $\mm_{11, 11}$ such that $E\equiv nK_{\mm_{11, 11}}$, one has that $R\cdot E=-n$, thus the class $E-n\dd_{11}$ is still effective and then $|nK_{\mm_{11, 11}}|=n\dd_{11}+|nK_{\mm_{11, 11}}-n\dd_{11}|$.
\end{proof}
We are in a position to complete the proof of Theorem \ref{gen11}:
\begin{theorem} We have that $\kappa\bigl(\mm_{11, 11}, \ 2\cdot \phi^*(\mathfrak{bn}_{11})+\sum_{i, c} d_{i: c}\cdot\delta_{i: c}\bigr)=19$.
It follows that the Kodaira dimension of $\mm_{11, 11}$ equals $19$.
\end{theorem}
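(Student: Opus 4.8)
The plan is to identify the divisor class $D:=2\cdot \phi^*(\mathfrak{bn}_{11})+\sum_{i,c}d_{i:c}\,\delta_{i:c}$ as essentially the pull-back under the Mukai fibration $q_{11}:\mm_{11,11}\dashrightarrow \ff_{11}$ of an ample class on the $19$-dimensional moduli space $\ff_{11}$ of polarized $K3$ surfaces of degree $20$, up to effective boundary corrections. Since $\dim \ff_{11}=19$, this would at once give $\kappa(\mm_{11,11},D)\leq 19$; combined with the reverse inequality obtained by exhibiting enough sections, it yields equality, and then the previous two propositions (the rigidity of $\dd_{11}$ as an extremal point of $\mathrm{Eff}(\mm_{11,11})$, and the resulting identity $\kappa(\mm_{11,11})=\kappa(\mm_{11,11},K_{\mm_{11,11}}-\dd_{11})$ together with the canonical representative (\ref{canrep})) immediately give $\kappa(\mm_{11,11})=19$.

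First I would make precise the statement, appearing in the introduction, that ``the difference $K_{\mm_{11,11}}-\dd_{11}$ is essentially the pull-back of an ample class on $\ff_{11}$''. Concretely, one uses that over an open dense locus of $\mm_{11,11}$ the map $q_{11}$ is a morphism whose fibres are the (closures of) the loci $\{[C,x_1,\dots,x_{11}]: C\subset S\}$ for fixed $[S,h]\in\ff_{11}$ and $x_i\in C$ varying, together with $C$ varying in its linear system on $S$; the fibre dimension is $41-19=22$, matching $\dim\mm_{11,11}-\dim\ff_{11}$. The key computation is that the Brill-Noether class $\phi^*(\mathfrak{bn}_{11})$, restricted to such a fibre, is trivial --- this is exactly Lazarsfeld's theorem that curves on a $K3$ with Picard rank $1$ satisfy Brill-Noether general behaviour, so $\mathfrak{bn}_{11}$ meets a covering curve of the fibre in degree $0$ --- and similarly $\delta_{i:c}$ restricts trivially for a generic such fibre. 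Hence $D$ is numerically a pull-back of a class $\overline{D}$ from $\ff_{11}$, at least away from boundary, and one must check that $\overline D$ is (big and nef, in fact) ample on $\ff_{11}$: the natural candidate is a positive multiple of the Hodge class $\lambda_{\ff_{11}}$ generating $\mathrm{Pic}(\ff_{11})\otimes\mathbb Q$ (which is of Picard rank one, up to Noether--Lefschetz loci), and the coefficient bookkeeping in (\ref{canrep}) --- the $2\phi^*(\mathfrak{bn}_{11})$ term carrying the relevant slope $7\lambda-\delta_0-\cdots$ --- should be arranged to realize exactly $q_{11}^*(\mathrm{ample})$ plus an effective combination of the $\delta_{i:c}$ contracted or shrunk by $q_{11}$. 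This gives $\kappa(\mm_{11,11},D)=\kappa(\ff_{11},\overline D)=\dim\ff_{11}=19$.

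The reverse inequality $\kappa(\mm_{11,11},D)\geq 19$, equivalently $\kappa(\mm_{11},\mm_{11,6}^1)=19$, I would obtain by showing that the Brill-Noether divisors sweeping out $\mm_{11}$ as one varies the $K3$ structure already separate points on $\ff_{11}$: the Mukai description $S^{\vee}=SU_C(2,K_C,6)$ shows that $\mm_{11,6}^1$ is the $q_{11}$-pullback of a Noether-Lefschetz-type divisor on $\ff_{11}$, and these span a linear system on $\ff_{11}$ of Iitaka dimension equal to $\dim\ff_{11}=19$ (here one invokes that $\ff_{11}$ is of general type, or at least has enough pluricanonical-type sections pulled back along $q_{11}$, so that the pulled-back system is big). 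Pulling back a basis of such sections and multiplying by the fixed rigid components $n\dd_{11}+(\text{boundary})$ produces $19$ algebraically independent pluricanonical forms on $\mm_{11,11}$.

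The main obstacle I anticipate is the careful boundary analysis: one must verify that after adding the explicit effective combination $\sum_{i,c}d_{i:c}\,\delta_{i:c}$ to $2\phi^*(\mathfrak{bn}_{11})$, the resulting class is genuinely the pull-back of a big (ideally ample) class along a well-defined model of $q_{11}$, i.e.\ that no pluricanonical section of $D$ can ``escape'' the fixed locus by acquiring a pole/zero along a boundary stratum that $q_{11}$ does not see. This requires identifying, stratum by stratum, which $\delta_{i:c}$ are $q_{11}$-exceptional (contracted, so they become fixed components of $|nD|$ and can be subtracted, reducing to a problem on $\ff_{11}$) versus which dominate $\ff_{11}$ (on which $D$ must restrict to an effective class of the correct slope), and checking the coefficients $d_{i:c}$ in (\ref{canrep}) are large enough to absorb the corrections --- a bookkeeping argument using the known formula for $\dd_g$ and for $K_{\mm_{g,n}}$, together with the generality of the chosen $K3$ surfaces, exactly as in the curve-construction part of Proposition~\ref{extrem}. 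Once this is in place, the chain of (in)equalities
$$19=\kappa(\ff_{11},\overline D)\le \kappa(\mm_{11,11},D)=\kappa(\mm_{11,11},K_{\mm_{11,11}}-\dd_{11})=\kappa(\mm_{11,11})\le 19$$
closes, the last bound coming from $\dim q_{11}(\mm_{11,11})=19$.
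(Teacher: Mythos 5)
Your overall strategy --- sandwich $D:=2\phi^*(\mathfrak{bn}_{11})+\sum d_{i:c}\delta_{i:c}$ between pull-backs from the Mukai base $\ff_{11}$ and conclude $\kappa=19$ --- is the right one and is essentially what the paper does. However, the two places you flag as ``anticipated obstacles'' are precisely where the actual content of the proof lies, and as written your argument has a genuine gap there.

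For the upper bound, the chain you write ends with ``$\leq 19$, coming from $\dim q_{11}(\mm_{11,11})=19$,'' but that inequality does not follow from knowing only that $\phi^*(\mathfrak{bn}_{11})$ and the generic $\delta_{i:c}$ have zero intersection with covering curves of the generic fibre. Numerical triviality on a family of curves sweeping out the fibres does not by itself show that every pluricanonical section descends; one has to produce an explicit class $B'\in\mathrm{Pic}(\mm_{11})$ (in the paper, $B'=2\mathfrak{bn}_{11}+\sum_i a_i\delta_i$ with $a_i\geq 0$) with $\phi^*(B')-A'$ effective, so that $\kappa(\mm_{11,11},A')\leq\kappa(\mm_{11},B')$, and then bound $\kappa(\mm_{11},B')\leq 19$ using that a Lefschetz pencil $R_{11}$ of genus $11$ curves on a general $K3$ satisfies $R_{11}\cdot B'=0$ and moves in an $11$-dimensional family inside the $30$-dimensional $\mm_{11}$. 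You gesture at the fibre dimension $22$ inside $\mm_{11,11}$, but the paper works on $\mm_{11}$ (fibres of dimension $11$), which is what makes the numerics close. Similarly, removing the $\delta_{0:c}$-terms is not automatic: the paper constructs, for each $T$, a covering curve $R_T$ of $\Delta_{0:T}$ by attaching a fixed $\PP^1$-tail at a moving point of a $K3$-pencil, computes $R_T\cdot\delta_{0:T}=-1$, $R_T\cdot\phi^*(\mathfrak{bn}_{11})=0$, and $R_T\cdot(\text{other boundary})=0$, and only then concludes $|nA|=\sum_c nd_{0:c}\Delta_{0:c}+|nA'|$. This test-curve bookkeeping is not an afterthought; it is the proof.

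On the lower bound, you invoke ``$\ff_{11}$ is of general type,'' which is a much deeper input than is needed and not what the paper uses. The paper simply applies \cite{FP} Proposition 6.2: the class $B=\mathfrak{bn}_{11}+4\delta_3+7\delta_4+8\delta_5$ contains the pull-back of an \emph{ample} class on a compactification of $\ff_{11}$, hence $\kappa(\mm_{11},B)=19$; then $A'-2\phi^*(B)$ effective and Zariski's Main Theorem give $\kappa(\mm_{11,11},A')\geq 19$. So your lower bound is morally right but routed through an unnecessary (and unjustified in your write-up) appeal to the birational type of $\ff_{11}$.
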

\begin{proof} To simplify the proof, we define a few divisors classes on $\mm_{11, 11}$:
 $$A:=2\cdot \phi^*(\mathfrak{bn}_{11})+\sum_{i\geq 0, c} d_{i: c}\ \delta_{i: c}\equiv K_{\mm_{11, 11}}-\dd_{11}\ \mbox{ and } \ A':=A-\sum_{c=2}^{11} d_{0: c}\
 \delta_{0: c},$$
 as well as, $B:= \mathfrak{bn}_{11}+4\delta_3+7\delta_4+8\delta_5\in \mbox{Pic}(\mm_{11})$.

We claim that for all integers $n\geq 1$ one has isomorphisms,
$$H^0\bigl(\mm_{11, 11}, \OO_{\mm_{11, 11}}(nA)\bigr)\cong H^0\bigl(\mm_{11, 11}, \OO_{\mm_{11, 11}}(nA')\bigr).$$
Indeed, we fix a set of labels $T\subset \{1, \ldots, 11 \}$ such that $\#(T)\geq 2$ and consider a pencil
$$\bigl\{[C_t, x_i(t), p(t): i\in T^c]\bigr\}_{t\in \PP^1} \subset \mm_{11, 12-\#(T)}, $$ of $(12-\#(T))$-pointed curves of genus $11$ on a general $K3$ surface $S$, with marked points being labeled by elements in $T^c$ as well by another label $p(t)$. The pencil is induced by a fibration obtained from a Lefschetz pencil of genus $11$ curves on $S$, with regular sections given by $(12-\#(T))$ of the exceptional divisors obtained by blowing-up $S$ at the $(2g-2)$ base points of the pencil. To each element in this pencil, we attach at the marked point labeled by $p(t)$, a fixed copy of $\PP^1$ together with fixed marked points $x_i\in \PP^1-\{\infty\}$, for $i\in T$. The gluing identifies the point $p(t)\in C_t$ with $\infty\in \PP^1$. If $R_T\subset \mm_{11, 11}$ denotes the resulting family,  we compute:
$$R_T\cdot \lambda=g+1,\ R_T\cdot \delta_{\mathrm{irr}}=6(g+3), \ \ R_T\cdot \delta_{0: T}=-1,\ R_T\cdot \psi_i=1 \mbox{ for }i\in T^c, \  \ R_T\cdot \psi_i=0\mbox{ for }i\in T.$$ Moreover, $R_T$ is disjoint from all remaining boundary divisors of $\mm_{11, 11}$. One finds that
$R_T\cdot \phi^*(\mathfrak{bn}_{11})=0$.  Thus for any effective divisor $E\subset \mm_{11, 11}$ such that $E\equiv nA$, we find that $R_T\cdot E=-n d_{0, c}$.

Since for all $T$, the pencil $R_T$  fills-up the divisor $\Delta_{0: T}$, we can deform the curves $R_T\subset \Delta_{0: T}$, to find that $E-\sum_{c=2}^{11}nd_{0: c} \cdot \delta_{0: c}$
is still an effective class, that is,
$$|nA|=\sum_{c=2}^{11} nd_{0: c}\cdot \Delta_{0: c}+|nA'|,$$
which proves the claim.
Next, by direct calculation we observe that the class $A'-2\phi^*(B)$ is effective.  Zariski's Main Theorem gives that  $\phi_*\phi^*\OO_{\mm_{11}}(B)=\OO_{\mm_{11}}(B)$, thus
$$\kappa\bigl(\mm_{11, 11}, A'\bigr)\geq \kappa\bigl(\mm_{11, 11}, \phi^*(B)\bigr)=\kappa(\mm_{11}, B)=19.$$
The last equality comes from \cite{FP} Proposition 6.2: The class $B$ contains the pull-back of an ample class under the Mukai map \cite{M3} $$q_{11}: \mm_{11, 11}\dashrightarrow \ff_{11},\ \  \ [C, x_1, \ldots, x_{11}]\mapsto [S\supset C,\  \OO_S(C)],$$
to a compactification of the moduli space of polarized $K3$ surfaces of degree $20$.

On the other hand, since $\phi^*(\delta_i)=\sum_{S} \delta_{i: S}$ for $1\leq i\leq 5$, there is a divisor class on $\mm_{11}$ of type
$B':= 2\cdot \mathfrak{bn}_{11}+\sum_{i=1}^5 a_i \delta_i\in \mathrm{Pic}(\mm_{11})$,
with $a_i\geq 0$, such that $\phi^*(B')-A'$ is an effective divisor. It follows that
$$\kappa\bigl(\mm_{11, 11}, A'\bigr)\leq \kappa\bigl(\mm_{11, 11}, \phi^*(B')\bigr)=\kappa(\mm_{11}, B').$$
If $R_{11} \subset \mm_{11}$ is the family corresponding to a Lefschetz pencil of curves of genus $11$ on a fixed $K3$ surface, then $R_{11}\cdot B'=0$. The pencil $R_{11}$ moves in a $11$-dimensional family inside $\mm_{11}$ which is contracted to a point by any linear series $|nB'|$ on $\mm_{11}$ with $n\geq 1$ (in fact a general curve $R_{11}$ is \emph{disjoint} from the base locus of $|nB'|$). One
finds that $\kappa(\mm_{11}, B')\leq 19$, which completes the proof.
\end{proof}
\section{The uniruledness of $\mm_{g, n}$}

We formulate a general principle, somewhat similar to the one used in the proof of Theorem \ref{spin8},  which we use in proving the uniruledness of some moduli spaces $\mm_{g, n}$. The next result, although simple, can be applied to determine \emph{all} uniruled moduli spaces $\mm_{g, n}$ for $g\leq 8$:
\begin{proposition}\label{uniruled}
Let $X$ be a projective $\mathbb Q$-factorial variety and suppose $D_1, D_2\subset X$ are irreducible effective $\mathbb Q$-divisors such that there exist covering curves
$\Gamma_i\subset D_i$, with $\Gamma_i\cdot D_i<0$ for $i=1, 2$ (in particular both $D_i\in \mathrm{Eff}(X)$ are non-movable divisors). Assume furthermore that
\begin{equation}
\begin{vmatrix}
\Gamma_1\cdot D_1& \Gamma_1\cdot D_2\\
\Gamma_2\cdot D_1& \Gamma_2\cdot D_2\\
\end{vmatrix} \leq 0,\mbox{ }\mbox{ }\
\begin{vmatrix}
\Gamma_1\cdot K_X& \Gamma_1\cdot D_1\\
\Gamma_2\cdot K_X& \Gamma_2\cdot D_1\\
\end{vmatrix}<0.
\end{equation}
Then $X$ is a uniruled variety.
\end{proposition}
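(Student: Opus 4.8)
The plan is to deduce the proposition from the theorem of Boucksom, Demailly, Paun and Peternell, by which a smooth complex projective variety with non--pseudo-effective canonical class is uniruled. First I would pass to a resolution of singularities $\pi\colon \widetilde X\to X$: since uniruledness is a birational invariant and $\pi_*K_{\widetilde X}=K_X$ (so that if $K_{\widetilde X}$ were pseudo-effective, then $K_X$ would be pseudo-effective as well), it suffices to show that $K_X$ is \emph{not} pseudo-effective; then $K_{\widetilde X}$ is not pseudo-effective either, hence $\widetilde X$, and therefore $X$, is uniruled. No hypothesis on the singularities of $X$ enters this reduction.

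Assume, for contradiction, that $K_X$ is pseudo-effective. Write $p_{ij}:=\Gamma_i\cdot D_j$ and $\kappa_i:=\Gamma_i\cdot K_X$, so the hypotheses read $p_{11},p_{22}<0$, $\ p_{11}p_{22}-p_{12}p_{21}\le 0$ and $\ \kappa_1 p_{21}-\kappa_2 p_{11}<0$. Since $D_1\neq D_2$ are irreducible, neither contains the other, so a general member of the covering family $\Gamma_i$ is not contained in $D_j$ for $j\neq i$, whence $p_{12},p_{21}\ge 0$. Fix an ample class $A$. For every rational $\epsilon>0$ the class $K_X+\epsilon A$ is big, hence $\mathbb Q$-linearly equivalent to an effective $\mathbb Q$-divisor $E_\epsilon=u_\epsilon D_1+v_\epsilon D_2+E'_\epsilon$, where $u_\epsilon,v_\epsilon\ge 0$ record the multiplicities of $D_1,D_2$ in $E_\epsilon$ and $E'_\epsilon\ge 0$ contains neither $D_1$ nor $D_2$ in its support. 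The one geometric input is the remark used repeatedly above: a curve sweeping out $D_i$ meets any effective divisor not containing $D_i$ non-negatively. Applied to $E'_\epsilon$ this gives $e_i(\epsilon):=\Gamma_i\cdot E'_\epsilon\ge 0$, so that $\kappa_i+\epsilon\,(\Gamma_i\cdot A)=u_\epsilon p_{i1}+v_\epsilon p_{i2}+e_i(\epsilon)$.

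I would then substitute these expressions into the second determinant. A direct expansion, in which the $u_\epsilon$-terms cancel, yields
$$\bigl(\kappa_1+\epsilon\,\Gamma_1\cdot A\bigr)p_{21}-\bigl(\kappa_2+\epsilon\,\Gamma_2\cdot A\bigr)p_{11}=-v_\epsilon\,(p_{11}p_{22}-p_{12}p_{21})+e_1(\epsilon)\,p_{21}-e_2(\epsilon)\,p_{11},$$
and every summand on the right is $\ge 0$: the first because $v_\epsilon\ge 0$ and $p_{11}p_{22}-p_{12}p_{21}\le 0$, the second because $e_1(\epsilon)\ge 0$ and $p_{21}\ge 0$, the third because $e_2(\epsilon)\ge 0$ and $p_{11}<0$. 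Letting $\epsilon\to 0^{+}$ gives $\kappa_1 p_{21}-\kappa_2 p_{11}\ge 0$, contradicting the second hypothesis. Hence $K_X$ is not pseudo-effective, and $X$ is uniruled.

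The argument is elementary; the only points that need a word of care are the reduction to the pseudo-effective statement (handled by pushing forward from a resolution) and the legitimacy of the inequality $\Gamma_i\cdot E'_\epsilon\ge 0$, i.e. the fact that a general member of a family covering $D_i$ avoids any fixed proper subvariety of $D_i$, together with the harmless observation that $p_{ij}$ and $\kappa_i$ depend only on the numerical classes $[\Gamma_i]$. I do not expect a genuine obstacle: the content of the proposition is precisely the $2\times 2$ determinant identity above, which converts the two numerical hypotheses into the desired contradiction.
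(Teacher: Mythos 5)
Your proof is correct and follows the same overall strategy as the paper's: invoke BDPP to reduce to showing that $K_X$ is not pseudo-effective, then obtain a contradiction by pairing the two covering curves against the two determinant hypotheses. The difference is in the bookkeeping. The paper chooses $\alpha,\beta\ge 0$ ``maximal'' with $K_X-\alpha D_1-\beta D_2$ pseudo-effective and asserts directly that $\Gamma_i\cdot(K_X-\alpha D_1-\beta D_2)\ge 0$ — implicitly an appeal to a Nakayama-type $\sigma$-decomposition — and then eliminates $\alpha$ from the two resulting inequalities. You instead perturb by $\epsilon A$ to make the class big (hence represented by an honest effective $\mathbb Q$-divisor), peel off the $D_1,D_2$-multiplicities, use the same covering-curve positivity on the residue $E'_\epsilon$, and let $\epsilon\to 0^+$ at the end; the $2\times 2$ identity you expand is exactly the elimination the paper compresses into one sentence. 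Your version has the advantage of avoiding the somewhat imprecise ``maximal $(\alpha,\beta)$'' phrasing and of stating explicitly the sign conditions $p_{12},p_{21}\ge 0$ (from $D_1\ne D_2$ irreducible) that the paper's elimination step uses tacitly. In short: same idea, slightly more careful execution.
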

\begin{proof} According to \cite{BDPP}, it suffices to prove that $K_X$ is not pseudo-effective. By contradiction, we choose $\alpha, \beta\in \mathbb R_{\geq 0}$ maximal such that $K_X-\alpha D_1-\beta D_2\in \overline{\mathrm{Eff}}(X)$. Then we can write down the inequalities
$$\Gamma_1 \cdot K_X\geq \alpha (\Gamma_1\cdot D_1) +\beta (\Gamma_1\cdot D_2) \ \mbox{ and } \Gamma_2\cdot K_X\geq \alpha (\Gamma_2\cdot D_1)+\beta (\Gamma_2\cdot  D_2).$$
Eliminating $\alpha$, the resulting inequality contradicts the assumption $\beta\geq 0$.
\end{proof}
We turn our attention to the proof of Theorem \ref{genul8}, which we split in three parts:

\begin{theorem}\label{gen5} $\mm_{5, n}$ is uniruled for $n\leq 14$.
\end{theorem}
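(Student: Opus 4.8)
The plan is to verify the hypotheses of Proposition~\ref{uniruled} for a well-chosen pair of irreducible effective divisors on $\mm_{5, n}$ with $n\leq 14$, and then reduce to the largest case $n = 14$. Indeed, since the relative dualizing sheaf of each forgetful map $\mm_{5, n}\rightarrow \mm_{5, n-1}$ is big, uniruledness of $\mm_{5, n}$ is easiest to obtain when $n$ is as large as possible; but one has to be slightly careful because uniruledness does \emph{not} automatically descend along the forgetful maps. The cleanest route is to prove uniruledness of $\mm_{5, 14}$ directly via Proposition~\ref{uniruled}, and then observe that a general fibre of $\mm_{5, n}\rightarrow \mm_{5, n-1}$ is a curve, so a uniruled $\mm_{5, n-1}$ gives a uniruled $\mm_{5, n}$ as long as the generic fibre is rational --- which it is, since $\mm_{5, n}\dashrightarrow\mm_{5, n-1}$ has generic fibre birational to the curve $C$ itself, and in our range this $C$ need not be rational. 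So instead I would run the Proposition~\ref{uniruled} argument for each $n$ in the range $n \le 14$, or, more economically, establish it for $n=14$ and then produce uniruling rational curves on $\mm_{5,n}$ for smaller $n$ by pushing forward (adding marked points only makes the problem easier in the sense of finding the two negative divisors, but one must re-exhibit the covering curves).

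The key input specific to genus $5$ is that the general canonical curve $C\subset\PP^4$ of genus $5$ is a complete intersection of three quadrics, and lies on many $K3$ surfaces: namely $C$ sits on a quartic $K3$ surface in $\PP^3$ (via a plane quintic model, or via its $g^1_4$'s and the associated $K3$'s), and indeed on $K3$ surfaces realized as complete intersections of quadrics in $\PP^4$ of various singularity types. The first divisor $D_1$ I would take to be (the $\mathfrak{S}_{n}$-symmetrization of, or a single component of) the divisor $\dd$-type locus of pointed curves whose marked points are special --- e.g. $n$ points lying on a hyperplane section residual to a pencil, or for $n$ large the boundary divisors $\delta_{0:T}$ with $\#T$ large, which by the computation in the proof of Proposition~\ref{extrem} (specifically the curves $R_T$ attaching a fixed rational tail) are covered by rational curves $R_T$ with $R_T\cdot\delta_{0:T}<0$. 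The second divisor $D_2$ should be a Brill--Noether--type or Weierstrass--type divisor pulled back from $\mm_5$ or from a forgetful map, again covered by a rational curve coming from a Lefschetz pencil on a fixed $K3$ surface containing the genus $5$ curve; on such a pencil the Brill--Noether class restricts to zero (by Lazarsfeld's theorem the curves on a Picard rank one $K3$ satisfy Brill--Noether generically), which forces the relevant $2\times 2$ determinants to have the required signs.

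Concretely the steps are: (1) fix a genus $5$ $K3$ surface $S$ with $\mathrm{Pic}(S)=\mathbb Z\cdot H$, $H^2 = 8$, take a Lefschetz pencil of genus $5$ curves on $S$ and decorate its members with $n$ marked points obtained from $n$ of the $2g-2 = 8$ base-point sections together, if $n>8$, with further sections produced by the $K3$-geometry trick of Proposition~\ref{extrem} (normalizing nodal members, attaching rational tails), yielding a covering rational curve $\Gamma_2$ of a pull-back divisor $D_2$; compute $\Gamma_2\cdot\lambda, \Gamma_2\cdot\delta_{\mathrm{irr}}, \Gamma_2\cdot\psi_i$ from Noether's formula and adjunction exactly as in equation~(\ref{numericalparameters}) and its genus-$10$ analogue. (2) Take $D_1$ to be an extremal uniruled divisor of the type $\dd_5$-analogue or a boundary divisor $\delta_{0:T}$, with covering rational curve $\Gamma_1$ as in the $R_T$ construction in the proof of the last theorem of Section~4; read off its intersection numbers. (3) Plug $\Gamma_1,\Gamma_2$ and the canonical class formula~(\ref{canmgn}) into the two determinant inequalities of Proposition~\ref{uniruled}; the first determinant vanishes or is negative because $\Gamma_1$ and $\Gamma_2$ are supported on distinct divisors meeting transversally (so one of the off-diagonal entries is zero and a diagonal entry is negative), and the second, after substituting $K_{\mm_{5,n}}$, reduces to a linear inequality in $n$ which holds precisely for $n\le 14$. (4) Conclude $K_{\mm_{5,n}}$ is not pseudo-effective, hence $\mm_{5,n}$ is uniruled by \cite{BDPP}. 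The main obstacle I anticipate is step (1) for $n$ in the upper part of the range, $9\le n\le 14$: one must exhibit enough independent sections of a $K3$-fibration carrying $n$ marked points while keeping control of the $\psi_i$ and of which boundary divisors $\delta_{i:T}$ the pencil meets, and verify that the resulting numerical invariants make the final determinant genuinely negative at $n=14$ but not beyond --- this is where the sharp bound $\zeta(5)=15$ is actually forced, matching the general-type range quoted from \cite{Log} and \cite{F2}.
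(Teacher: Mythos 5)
Your proposal does not match the paper's argument for genus $5$, and as it stands it has genuine gaps. The paper does \emph{not} invoke Proposition \ref{uniruled} here: it constructs a single rational curve $U$ that covers all of $\mm_{5,14}$ and satisfies $U\cdot K_{\mm_{5,14}}=-2$, which by \cite{BDPP} already shows $K_{\mm_{5,14}}$ is not pseudo-effective. The curve comes from the $5$-nodal plane sextic model of a genus-$5$ curve: a pencil of sextics with nodes at $5$ fixed general points and passing through $11$ further general points yields $11$ sections from the assigned base points, a $2$-section from the exceptional divisor over a node, and a $6$-section from a general line; after base changes of order $2$ and $6$ these become sections, producing $14$ marked points with computable $\psi$-degrees. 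This is precisely the ingredient your plan lacks: a genus-$5$ Lefschetz pencil on a $K3$ surface has only $2g-2=8$ base points, and you give no concrete mechanism to reach $14$ marked points. The "rational tail" trick of Proposition \ref{extrem} does not help, since the resulting curves $R_T$ lie inside (and only cover) the boundary divisors $\Delta_{0:T}$, not the whole space, and attaching tails changes the $\psi$- and boundary-intersections in a way you have not controlled.

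Two further points would derail the argument as written. First, your proposed $\Gamma_2$, a Lefschetz pencil on a Picard-rank-one $K3$, consists of Brill--Noether general curves by Lazarsfeld's theorem, so it is \emph{disjoint} from the trigonal divisor $\phi^*(\mm^1_{5,3})$ and meets it in degree $0$; Proposition \ref{uniruled} requires a covering curve $\Gamma_2\subset D_2$ with $\Gamma_2\cdot D_2<0$, so this choice of $(D_2,\Gamma_2)$ fails the hypotheses outright (in the paper's genus $7$ and $8$ arguments the covering curves of the Brill--Noether divisors come from pencils of nodal \emph{plane} curves, not from $K3$ surfaces). Second, your discussion of reducing among the various $n$ is backwards: the generic fibre of $\mm_{5,n}\rightarrow\mm_{5,n-1}$ is the genus-$5$ curve itself, which is not rational, so uniruledness does not ascend along forgetful maps; it \emph{descends} from $\mm_{5,14}$ to $\mm_{5,n}$ for $n<14$ because the uniruling curves are not contracted (they have positive degree on $\lambda$). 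This is why proving the single case $n=14$ suffices, and why that case must be attacked directly.
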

\begin{proof}
A general $2$-pointed curve $[C, x, y]\in \cM_{5, 2}$ carries a finite number of linear series $L\in W^2_6(C)$, such that
if $\nu_L: C\stackrel{|L|}\longrightarrow \Gamma\subset \PP^2$ is the induced plane model, then $\nu_L(x)=\nu_L(y)=p_1$. Note that
$\Gamma$ has nodes, say $p_1, \ldots, p_{5}$, and $\mbox{dim }|\OO_{\PP^2}(\Gamma)(-2\sum_{i=1}^5 p_i)|=12$.

We pick general points $\{x_i\}_{i=1}^{11}, \ \{p_j\}_{j=1}^5 \subset \PP^2$, and a general line $l\subset \PP^2$. One considers the pencil of sextics passing simply through $x_1, \ldots, x_{11}$ and having nodes (only) at $p_1, \ldots, p_5$. The pencil induces a fibration $f':S\rightarrow \PP^1$, where $S:=\mathrm{Bl}_{21}(\PP^2)$ is obtained from $\PP^2$ by blowing-up $p_1, \ldots, p_5$, $x_1, \ldots, x_{11}$, as well as the remaining unassigned base points of the pencil. The exceptional divisors $E_{x_i}\subset S$ provide $11$ sections of $f'$. The exceptional divisor $E_{p_1}$ induces a $2$-section. Making a base change via the map $f'_{E_{p_1}}: E_{p_1}\rightarrow \PP^1$,  the $2$-section $E_{p_1}$ splits into two sections $E_x$ and $E_y$  meeting at $2$ points. Blowing these points up, we arrive at a
fibration $f:Y\rightarrow E_{p_1}$, carrying $13$ everywhere disjoint sections, $\tilde{E}_x, \tilde{E}_y, \tilde{E}_{x_1}, \ldots, \tilde{E}_{x_{11}}$, where $\tilde{E}_{x_i}\subset Y$ denotes the inverse image of $E_{x_i}$, and $\tilde{E}_x, \tilde{E}_y$ denote the proper transforms of $E_x$ and $E_y$ respectively. This induces a family of pointed stable curves
$$\Gamma:=\bigl\{[C_{\lambda}:=f^{-1}(\lambda),\  \tilde{E}_x\cdot C_{\lambda},\  \tilde{E}_y\cdot C_{\lambda},\  \ \tilde{E}_{x_1}\cdot C_{\lambda}, \ldots, \ \tilde{E}_{x_{11}}\cdot C_{\lambda}]:
\lambda\in E_{p_1}\bigr\}\subset \mm_{5, 13}.$$

We compute the numerical characters of $\Gamma$ (see also the proof of Proposition \ref{extrem}):
$$\Gamma\cdot \lambda=\mathrm{deg}(f_{E_{p_1}})\bigl(\chi(S, \OO_S)+g-1\bigr)=10,\ \Gamma\cdot \delta_{\mathrm{irr}}=\mathrm{deg}(f_{E_{p_1}})\bigl(c_2(S)+4g-4\bigr)=80,$$
$$\Gamma\cdot \psi_x=\Gamma\cdot \psi_y=5, \ \Gamma\cdot \psi_{x_1}=\cdots =\Gamma\cdot \psi_{x_{11}}=2, \ \Gamma\cdot \delta_{0: xy}=2,$$
whereas $\Gamma$ is disjoint from the remaining boundary divisors. Furthermore, $f$ admits a $6$-section given by the points of intersection
$l\cdot C_{\lambda}$. Making a base change of order $6$ via $f_{l}:l\rightarrow E_{p_1}$, we obtain a family $U\subset \mm_{5, 14}$, with
 marked points  labeled by $\bigl(x(t), y(t), x_1(t),\ldots, x_{11}(t), l(t)\bigr)_{t\in l}$.
  Clearly $(U\cdot \alpha)_{\mm_{5, 14}}=\mbox{deg}(f_{l})\bigl(\Gamma\cdot \alpha\bigr)_{\mm_{5, 13}}$, for every class $\alpha\in \{\lambda, \psi_x, \ \psi_y,\  \psi_{x_1},\ldots, \psi_{x_{11}},\   \delta_{\mathrm{irr}}, \delta_{0: xy} \}\subset \mbox{Pic}(\mm_{5, 14})$. Finally, one has that
$$U\cdot \psi_l=-(l^2)_{Y}+\bigl(2\mathrm{deg}(f_{l})-2\bigr)=10.$$
By direct calculation one finds, $U\cdot K_{\mm_{5, 14}}=-2$, which completes the proof.
\end{proof}
\begin{remark} It is known, cf. \cite{F2} Section 5,  that $\mm_{5, 15}$ is of general type. Thus Theorem \ref{gen5} settles completely the classification problem for $\mm_{5, n}$. In particular, $\zeta(5)=15$.
\end{remark}

\begin{theorem}\label{gen78} $\mm_{8, n}$ is uniruled for $n\leq 12$.
\end{theorem}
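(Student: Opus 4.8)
The plan is to apply Proposition \ref{uniruled} with a suitable pair of extremal uniruled divisors on $\mm_{8, 12}$, exactly as in the proof of Theorem \ref{gen5}. The natural first divisor is $\dd_8\subset \mm_{8, 8}$ pulled back (via a sequence of forgetful maps, or more precisely the divisor $\cD_8^{(12)}$ of $12$-pointed curves where some $8$ of the marked points move in a pencil), which by Proposition \ref{extrem} is extremal and rigid, together with a covering curve $R_1$ built from a Lefschetz pencil of genus $8$ curves on a $K3$ surface in $\PP^8$ (a general genus $8$ curve lies on such a surface, cf. \cite{M1}), carrying $12$ disjoint sections. For that curve one has $R_1\cdot \lambda = 9$, $R_1\cdot \delta_{\mathrm{irr}} = 66$, $R_1\cdot \psi_i = 1$ for all $i$, $R_1\cdot \delta_{i:T} = 0$ otherwise, and $R_1\cdot \mm_{8,7}^2 = 0$ since $\mathrm{Pic}(S) = \mathbb Z$ forces Brill-Noether generality along the family (cf. \cite{Laz}).

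The second divisor is the analog of the plane-curve construction used for $g = 5$: a general curve of genus $8$ has a plane model of degree $7$ with $7$ nodes, and the Brill-Noether divisor $\mm_{8,7}^2$ is extremal and rigid on $\mm_8$ by Proposition \ref{septics}. I would take the $2$-pointed version, fixing two of the marked points to be the two branches over one of the nodes: starting from a general $[C, x, y] \in \cM_{8,2}$ with a $\mathfrak g^2_7$ whose plane model identifies $x$ and $y$ at a node $p_1$, pick general points $x_1, \ldots, x_{?} \in \PP^2$, nodes $p_1, \ldots, p_7$, and run the pencil of septics passing simply through the $x_i$ and doubly through the $p_j$. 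This induces $f' : S = \mathrm{Bl}(\PP^2) \to \PP^1$ with the $E_{x_i}$ as sections and $E_{p_1}$ as a $2$-section; a degree $2$ base change over $E_{p_1}$ splits it into sections $E_x, E_y$, which after blowing up their two intersection points become disjoint. Counting: $\mathrm{dim}|\OO_{\PP^2}(7)(-2\sum_{j=1}^7 p_j)| = 35 - 1 - 21 = 13$, so the pencil forces one relation among $13$ free parameters, i.e.\ one can impose $10$ simple base points $x_1, \ldots, x_{10}$; together with $x, y$ that gives $12$ marked points, landing in $\mm_{8, 12}$. The resulting covering curve $\Gamma_2$ has $\Gamma_2 \cdot \lambda = 2(\chi(\OO_S) + 7)$, $\Gamma_2 \cdot \delta_{\mathrm{irr}} = 2(c_2(S) + 28)$, $\Gamma_2 \cdot \psi_x = \Gamma_2 \cdot \psi_y = 5$, $\Gamma_2 \cdot \psi_{x_i} = 2$, $\Gamma_2 \cdot \delta_{0: xy} = 2$, and $\Gamma_2 \cdot \phi^*(\mm_{8,7}^2) < 0$ since the whole family lies in the plane-septic locus.

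With these two divisors $D_1 = \dd_8^{(12)}$ (with covering curve $\Gamma_1 = R_1$) and $D_2 = \phi^*(\mathfrak{bn}_8) + (\text{boundary correction})$ (with covering curve $\Gamma_2$), I would verify the two $2\times 2$ determinant inequalities in Proposition \ref{uniruled} by direct substitution using the canonical formula (\ref{canmgn}): $\Gamma_1 \cdot K_{\mm_{8,12}}$ and $\Gamma_2 \cdot K_{\mm_{8,12}}$ are small (one expects $\Gamma_1 \cdot K = -1$ by rigidity of $\dd_8$ and the Brill-Noether genericity, matching the phenomenon in Theorem \ref{spin8}), while $\Gamma_1 \cdot D_2 = 0$ and $\Gamma_2 \cdot D_1 \geq 0$ make the first determinant $\leq 0$, and the second determinant $\leq \Gamma_1\cdot K_X \cdot (\Gamma_2\cdot D_1) - \Gamma_2\cdot K_X \cdot (\Gamma_1\cdot D_1)$ comes out strictly negative.

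The main obstacle, as in the genus $5$ case, is the bookkeeping in the second construction: one must (i) confirm that a general $[C, x, y] \in \cM_{8,2}$ really does carry a $\mathfrak g^2_7$ with the two marked points identified at a node (a dimension count: $\dim W^2_7(C) = \rho(8,2,7) = 0$ for general $C$, and each $\mathfrak g^2_7$ has $7$ nodes, so the incidence is nonempty after the $2$-pointed shift), (ii) choose $l_0$ and the pencil generically enough that all fibres are nodal with nodes away from the marked points and at most two fibres are tangent to the relevant $2$-section, and (iii) get the numerical characters right after the two base changes and blow-ups — in particular the $\psi$-intersections on the sections coming from $E_x, E_y$, which pick up the $-(E_x^2) + (2\deg - 2)$ correction. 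I would also need to double-check that $\Gamma_2$ genuinely fills up $\phi^*(\mm_{8,7}^2)$ (equivalently $\pi_*\Gamma_2$ fills $\mm_{8,7}^2$), which follows because the nodal-septic locus is irreducible and our pencil sweeps it, and that the correcting boundary coefficients in $D_2$ can be chosen so that $D_2$ is effective and $\Gamma_1 \cdot D_2 = 0 = \Gamma_2 \cdot (\text{boundary part})$; both are routine once the intersection numbers are tabulated.
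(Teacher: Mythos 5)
Your plan fails at the first determinant inequality in Proposition~\ref{uniruled}, and the failure is structural, not a matter of bookkeeping. You are proposing $D_1$ (the pointed Brill--Noether divisor with a K3 covering curve $\Gamma_1$) and $D_2$ (essentially $\phi^*(\mathfrak{bn}_8)$ with a plane-septic covering curve $\Gamma_2$), and you correctly note that $\Gamma_1\cdot D_2=0$ since the K3 pencil is Brill--Noether general. But then the first determinant becomes
$$
(\Gamma_1\cdot D_1)(\Gamma_2\cdot D_2)-(\Gamma_1\cdot D_2)(\Gamma_2\cdot D_1)
=(\Gamma_1\cdot D_1)(\Gamma_2\cdot D_2)>0,
$$
since both factors are negative by hypothesis and the cross-term vanishes. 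This is strictly positive, the opposite of what Proposition~\ref{uniruled} requires, regardless of the sign of $\Gamma_2\cdot D_1$. Your sentence ``$\Gamma_1\cdot D_2=0$ and $\Gamma_2\cdot D_1\geq 0$ make the first determinant $\leq 0$'' has the logic backwards: once $\Gamma_1\cdot D_2=0$ the term involving $\Gamma_2\cdot D_1$ disappears, and you are stuck with the product of two negatives. Swapping the roles of $D_1$ and $D_2$ does not help either, since the first determinant is symmetric under simultaneously interchanging $(D_1,\Gamma_1)$ and $(D_2,\Gamma_2)$.

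The paper avoids this trap by taking $D_2=\Delta_{\mathrm{irr}}$ with a covering curve $\Gamma_2$ of a completely different character: a fixed genus-$7$ pointed curve glued to itself at a moving point, giving $\Gamma_2\cdot\delta_{\mathrm{irr}}=-2g(C)=-14<0$ but $\Gamma_2\cdot\lambda=0$ and $\Gamma_2\cdot\psi_i=1$. Crucially, the plane-septic pencil $\Gamma_1$ (which plays the role your K3 pencil tried to play) satisfies $\Gamma_1\cdot D_2=\Gamma_1\cdot\delta_{\mathrm{irr}}=59>0$, and $\Gamma_2\cdot D_1=28/3>0$; the large positive cross-product $59\cdot 28/3$ makes the first determinant negative. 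The moral is that to run this argument the two covering curves must interact: each one must meet the other's divisor strictly positively. A K3 Lefschetz pencil, being ``too nice'' (disjoint from every geometric divisor one cares about), cannot serve in this role opposite a Brill--Noether divisor.

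Two smaller points. First, a general curve of genus $8$ does not have a $\mathfrak g^2_7$ ($\rho(8,2,7)=-1$), so the opening of your second construction (``a general curve of genus $8$ has a plane model of degree $7$ with $7$ nodes'') is false; the plane-septic pencil sweeps the \emph{divisor} $\mm_{8,7}^2$, not $\mm_8$. Second, since $D_2$ in your scheme is a pullback from $\mm_8$, you do not need (and should not want) the two marked points to lie over a node; the correct covering curve is a plain Lefschetz pencil of $7$-nodal septics with marked points at unassigned base points, which is exactly the paper's $\Gamma_1$. The construction with branches of a node is the right move for \emph{pointed} Brill--Noether divisors, as in the genus $5$ and $7$ cases, but here you are pairing it with the wrong partner.
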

\begin{proof} We apply Proposition \ref{uniruled} for $D_1:=\mathfrak{bn}_8$ (see Section 2), and   $D_2:=\Delta_{\mathrm{irr}}\in \mathrm{Eff}(\mm_{8, n})$. To construct a covering curve $\Gamma_1\subset D_1$, we lift to $\mm_{8, n}$ a Lefschetz pencil of $7$-nodal plane septics.  The fibration $f:\mathrm{Bl}_{28}(\PP^2)\rightarrow \PP^1$ constructed in the course of proving Proposition \ref{septics}, carries $n$ sections given by the exceptional divisors corresponding to $n$ unassigned base points. If $\Gamma_1\subset \mm_{8, n}$ denotes the resulting, then
$$\Gamma_1\cdot \lambda=\phi_*(\Gamma_1) \cdot \lambda=8, \ \Gamma_1\cdot \delta_{\mathrm{irr}}=\phi_*(\Gamma_1) \cdot \delta_{\mathrm{irr}}=59,  \ \Gamma_1\cdot \psi_i=1 \mbox{ for } i=1, \ldots, n,$$
and $\Gamma_1 \cdot \delta_{i: T}=0$. It follows that $\Gamma_1\cdot D_1=-1/3, \ \Gamma_1\cdot K_{\mm_{8, n}}=n-14$ and $\Gamma_1\cdot D_2=59$.

We construct a covering curve $\Gamma_2\subset D_2$ and start with a general pointed curve $[C, x_1, \ldots, x_{n+1}]\in \mm_{7, n+1}$. We identify $x_{n+1}$ with a moving point $y\in C$, that is, take
$$\Gamma_2:=\bigl\{\bigl[\frac{C}{y\sim x_{n+1}},  x_1, \ldots, x_{n} \bigr]: y\in C\bigr\}\subset \mm_{8, n}.$$
It is easy to compute that
$\Gamma_2\cdot \lambda=0$, \  $\Gamma_2\cdot \delta_{\mathrm{irr}}=-2g(C)=-14$, \ $\Gamma_2\cdot \delta_{1: \emptyset}=1,\ \Gamma_2\cdot \psi_i=1,
\mbox{ for } i=1, \ldots, n,$
and $\Gamma_2\cdot \delta_{i: T}=0$ for $(i, T)\neq (1, \emptyset)$. Therefore $\Gamma_2\cdot D_1=28/3$ and $\Gamma_2\cdot K_{\mm_{8, n}}=25+n$. The conditions of Proposition \ref{uniruled} are satisfied for $n\leq 12$.
\end{proof}
\begin{remark} The results of Theorem \ref{gen78} are almost optimal. The space $\mm_{8, 14}$ is of general type, cf. \cite{Log}. The Kodaira dimension of $\mm_{8, 13}$ is still unknown. Note that it was already known \cite{Log}, \cite{CF}, that
$\mm_{8, n}$ is unirational for $n\leq 11$.
\end{remark}
\vskip 3pt

Finally, we turn to the case of genus $7$.
In order to establish the uniruledness of $\mm_{7, n}$, we consider the following  effective divisors on $\mm_{7, n}$:
$$\cD_1:=\{[C, x_1, \ldots, x_n]\in \cM_{7, n}: \exists L\in W^2_7(C)\mbox{  with } h^0(C, L(-x_1-x_2))\geq 1\}, $$
and $\dd_2:= \phi^*(\mathfrak{bn}_{7})$, where $\mathfrak{bn}_7:=\frac{1}{c_{7, 4}^1}\mm_{7, 4}^1\equiv 15\lambda-2\delta_0-9\delta_1-15\delta_2-18\delta_3 \in \mathrm{Pic}(\mm_7)$
is the linear system spanned by the unique Brill-Noether divisor on $\mm_7$. Before computing the class $[\dd_1]$,  we need a calculation, which may be of independent interest:

\begin{proposition}\label{divm72}
Let $g\equiv 1  \ \mathrm{ mod } \ 3$ be a fixed integer and set $d:=(2g+7)/3$, so that the Brill-Noether number $\rho(g, 2, d)=1$. One considers the effective divisor of nodes of plane curves
$$\mathfrak{Node}_g:=\{[C, x, y]\in \cM_{g, 2}: \exists L\in W^2_d(C) \mbox{ such that } \ h^0(C, L(-x-y))\geq 2\}.$$
The class of its closure in $\mm_{g, 2}$ is given by the formula:
$$\overline{\mathfrak{Node}}_g\equiv c_g\Bigl((g+4)\lambda+\frac{g+2}{6}(\psi_1+\psi_2)-\frac{g+2}{6}\delta_{\mathrm{irr}}-g\delta_{0: 12}-\cdots\ \Bigr)\in \mathrm{Pic}(\mm_{g, 2}),$$
$$\mbox{ where }\ \  c_g:=\frac{24(g-2)!}{(g-d+5)!\ (g-d+3)!\ (g-d+1)!}.$$
\end{proposition}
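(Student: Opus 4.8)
The plan is to realize $\overline{\mathfrak{Node}}_g$ as the push-forward of a tautological class on a parameter space of Brill-Noether pencils, exactly in the style of Logan's computations (cf. \cite{Log}). First I would fix the relative situation: let $\pi:\mathcal{C}\to\mm_{g,2}$ (or rather over the locus where $\rho(g,2,d)=1$ guarantees a finite, reduced count of $\mathfrak g^2_d$'s) be the universal curve, and let $p:\mathcal{G}^2_d\to\mm_g$ be the space of limit linear series of type $\mathfrak g^2_d$. Since $\rho(g,2,d)=1$, a general point of $\mm_g$ carries $c_g$ such pencils, where $c_g$ is the Castelnuovo number $24\,(g-2)!/\bigl((g-d+5)!\,(g-d+3)!\,(g-d+1)!\bigr)$ for $r=2$ — this explains the leading constant $c_g$ and reduces everything to a single universal $\mathfrak g^2_d$ up to this multiplicity. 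Over $\mathcal{G}^2_d$ one has the universal line bundle $\mathcal{L}$ and a rank-3 subbundle $\mathcal{V}\subset \pi_*(\mathcal{L})$; the node condition "$h^0(C,L(-x-y))\ge 2$" is the locus where the evaluation map $\mathcal{V}\to\mathcal{L}_x\oplus\mathcal{L}_y$ drops rank, i.e. where a section of a rank-2 bundle on the two-pointed universal curve vanishes. So $\overline{\mathfrak{Node}}_g$ is a degeneracy (Thom–Porteous / Chern-class) locus, and its class is obtained by computing $c_1$ of the relevant bundle and pushing forward to $\mm_{g,2}$.

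The key computational steps, in order: (i) express the tautological classes on $\mathcal{G}^2_d$ — the first Chern class of $\pi_*(\mathcal{L})$ and the $\kappa$/$\psi$-type classes — in terms of $\lambda,\delta_{\mathrm{irr}}$ pulled back from $\mm_g$, using Grothendieck–Riemann–Roch for $\pi$ as in the standard Brill-Noether divisor calculations (this is where the $\lambda$ and $\delta_{\mathrm{irr}}$ coefficients ultimately come from); (ii) incorporate the two marked points: the condition involves the fibers $\mathcal{L}_{x_1}$ and $\mathcal{L}_{x_2}$, so the evaluation map lives naturally on $\mm_{g,2}$ and its degeneracy class picks up $\psi_1+\psi_2$ symmetrically (hence the symmetric coefficient $\tfrac{g+2}{6}$ in front of $\psi_1+\psi_2$) together with a correction along the diagonal $\delta_{0:12}$, where $x_1=x_2$ forces an automatic drop of rank — this accounts for the $-g\,\delta_{0:12}$ term; (iii) assemble the push-forward via Porteous, and then pin down the remaining boundary coefficients (the "$\cdots$") either by test-curve calculations or, more efficiently, by restricting to known one-parameter families (a Lefschetz pencil on a $K3$, a family varying one point on a fixed curve, a family sprouting an elliptic tail) and matching intersection numbers; (iv) finally, determine the overall normalization: compare with the known formula for the ordinary Brill-Noether divisor $\mm_{g,d}^2$ on $\mm_g$ (the Eisenbud–Harris class, cf. \cite{EH2}) by pushing $\overline{\mathfrak{Node}}_g$ forward along the map forgetting the second point, which should recover a multiple of that class and fix $c_g$.

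I expect the main obstacle to be step (iii): controlling the boundary contributions honestly. The interior class is essentially forced by GRR and Porteous, but the behavior of the limit-linear-series space $\mathcal{G}^2_d$ over the boundary divisors $\Delta_i\subset\mm_g$ and over the new boundary divisors $\Delta_{i:T}\subset\mm_{g,2}$ requires either a careful analysis of how $\mathfrak g^2_d$'s degenerate on reducible curves (à la Eisenbud–Harris) or a sufficiently rich supply of test curves to solve for all unknown coefficients — and there are several such coefficients once $g$ is general. A pragmatic route, which I would adopt here since the paper only needs the coefficients of $\lambda$, $\psi_1+\psi_2$, $\delta_{\mathrm{irr}}$ and $\delta_{0:12}$ explicitly, is to compute the interior class rigorously, determine those four coefficients by four well-chosen test families (Lefschetz pencil on a $K3$ for the $\lambda$ vs.\ $\delta_{\mathrm{irr}}$ ratio, a moving-point family for the $\psi$-coefficients, and a family through the diagonal for $\delta_{0:12}$), and leave the remaining boundary coefficients implicit, exactly as the statement does with its "$\cdots$". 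The consistency check in step (iv) against the classical $\mm_{g,d}^2$ formula then certifies the normalization constant $c_g$.
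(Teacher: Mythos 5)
Your overall strategy (realize the divisor as a degeneracy locus, compute its class via Chern classes and GRR, and pin down remaining coefficients by test curves) is a legitimate one in principle, but it contains a genuine misconception and also differs substantially from what the paper does.

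The misconception is in your treatment of the constant $c_g$. You write that since $\rho(g,2,d)=1$ "a general point of $\mm_g$ carries $c_g$ such pencils," and that this "reduces everything to a single universal $\mathfrak g^2_d$ up to this multiplicity." But $\rho=1$ means precisely that $W^2_d(C)$ is a \emph{one-dimensional} Brill--Noether variety for generic $[C]$, not a finite set of $c_g$ points. The degeneracy locus therefore lives over a space fibered in curves over $\mm_g$, and the push-forward requires integrating a Chern class against the fundamental class of the one-dimensional $W^2_d(C)$; this is exactly what the paper's formula $C_2\cdot \overline{\mathfrak{Node}}_g= -2\theta\cdot [W^2_d(C)]+(d-1)c_1(\nu_*\P^{\vee})$ encodes. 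The constant $c_g$ does \emph{not} arise as a counting number on $\mm_g$; it is the Eisenbud--Harris normalization constant of the Brill--Noether class $\mm_{g+1,d}^2$ with $\rho(g+1,2,d)=-1$. Relatedly, your step (iv), pushing $\overline{\mathfrak{Node}}_g$ forward along $\phi_1:\mm_{g,2}\to\mm_{g,1}$ to recover a multiple of the BN class, does not work: $\phi_1$ has one-dimensional fibers and $\mathfrak{Node}_g$ dominates $\mm_{g,1}$, so $(\phi_1)_*\overline{\mathfrak{Node}}_g$ is not a divisor class. What one can push forward is the codimension-two cycle $\overline{\mathfrak{Node}}_g\cdot\delta_{0:12}$, which is what the paper does.

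The paper's argument is both different and considerably more economical. It (a) identifies $(\phi_1)_*(\overline{\mathfrak{Node}}_g\cdot\delta_{0:12})$ with the cusp divisor $\overline{\mathfrak{Cu}}_g\subset\mm_{g,1}$; (b) computes $[\overline{\mathfrak{Cu}}_g]$ by writing it as $j^*(\mm_{g+1,d}^2)$ under the elliptic-tail gluing map $j:\mm_{g,1}\to\mm_{g+1}$, where $\rho(g+1,2,d)=-1$ so the Eisenbud--Harris class formula applies directly; (c) transports the $\lambda$, $\delta_{\mathrm{irr}}$ and $\delta_{0:12}$ coefficients of $\overline{\mathfrak{Node}}_g$ from those of $\overline{\mathfrak{Cu}}_g$ using the elementary push-forward identities $(\phi_1)_*(\lambda\cdot\delta_{0:12})=\lambda$, $(\phi_1)_*(\delta_{0:12}^2)=-\psi$, etc.; and (d) computes the single remaining unknown, the $\psi_1$-coefficient, by a direct Chern-number calculation on $C\times W^2_d(C)$ for one test curve $C_2=\{[C,q,y]:y\in C\}$. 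This avoids the full GRR/Porteous machinery over a universal limit-linear-series stack and the associated boundary analysis; the only honest degeneracy computation needed is the one over a fixed general curve in step (d). If you pursue your route, you would need to replace the "finite count" argument with a genuine integration over the one-dimensional Brill--Noether loci and you would still face the hardest part of the program, namely controlling the behavior of the degeneracy locus over the boundary of $\mm_{g,2}$, which you yourself flag as the main obstacle; the paper's detour through the elliptic-tail pull-back of a $\rho=-1$ divisor circumvents exactly that difficulty.
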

\begin{proof} We denote by $\phi_1:\mm_{g, 2}\rightarrow \mm_{g, 1}$ the morphism forgetting the second marked point. The divisor $\overline{\mathfrak{Cu}}_g:=(\phi_1)_*(\overline{\mathfrak{Node}}_g\cdot \delta_{0: 12})$ coincides with the cusp locus in $\mm_{g, 1}$, that is, the closure of
the locus of pointed curves $[C, x]\in \cM_{g, 1}$, such that there exists $L\in W^2_d(C)$ with $h^0(C, L(-2x))\geq 2$.

In order to compute its class, we fix a general elliptic curve $[E, x]\in \mm_{1, 1}$ and consider the map $j:\mm_{g, 1}\rightarrow \mm_{g+1}$,  given by $j([C, x]):=[C\cup_x
E]$. Then $$\overline{\mathfrak{Cu}}_g=j^*(\mm_{g+1, d}^2),$$ where $\mm_{g+1, d}^2$ is the Brill-Noether divisor on $\mm_{g+1}$ consisting of curves with a
$\mathfrak g^2_d$ (Note that $\rho(g+1, 2, d)=-1$). Since the class $[\mm_{g+1, d}^2]\in \mathrm{Pic}(\mm_{g+1})$ is known, cf. \cite{EH2}, and  $j^*(\lambda)=\lambda$, \ $j^*(\delta_{\mathrm{irr}})=\delta_{\mathrm{irr}}$, $j^*(\delta_1)=-\psi+\delta_{g-1: 1}$, we obtain the following expression
$$\overline{\mathfrak{Cu}}_g\equiv c_g \Bigl((g+4)\lambda+g\psi-\frac{g+2}{6}\delta_{\mathrm{irr}}-\sum_{i=1}^{g-1} (i+1)(g-i)\delta_{i: 1} \Bigr)\in \mathrm{Pic}(\mm_{g, 1}).$$
Using the obvious formulas $(\phi_1)_*(\lambda\cdot \delta_{0: 12})=\lambda$,\ $(\phi_1)_*(\delta_{0: 12}^2)=-\psi$, $(\phi_1)_*(\delta_{\mathrm{irr}}\cdot \delta_{0: 12})=\delta_{\mathrm{irr}}$ and $(\phi_1)_*(\psi_i\cdot \delta_{0: 12})=0$ for $i=1, 2$, one finds that the
$\delta_{0:12}$-coefficient of $\overline{\mathfrak{Node}}_g$ equals the $\psi_1$-coefficient of $\overline{\mathfrak{Cu}}_g$, while the
$\lambda, \delta_{\mathrm{irr}}$-coefficients coincide.

We determine the $\psi_1$-coefficient in $[\overline{\mathfrak{Node}}_g]$. To this end, we fix a general point $[C, q]\in
\cM_{g, 1}$ and consider the test curve $C_2:=\{[C, q, y]: y\in C\}\subset \mm_{g, 2}$. Then, $C_2\cdot \psi_1=1$, $C_2\cdot \psi_2=2g-1$ and obviously $C_2\cdot \delta_{0: 12}=1$. On the other hand, $C_2\cdot \overline{\mathfrak{Node}}_g$ equals the number of points $y\in C$, such that for some (necessarily complete and base point free) linear series $L\in W^2_d(C)$, the morphism $$\chi(y, L): L_{|y+q}^{\vee}\rightarrow H^0(C, L)^{\vee} $$ fails to be injective. The map $\chi(y, L)$ globalizes to a morphism $\chi$ between vector bundles over $C\times W^2_d(C)$, and the number in question is expressed as the Chern number of the top degeneracy locus of $\chi$. Precisely, if $\P$ denotes a Poincar\'e bundle on $C\times \mbox{Pic}^d(C)$ and $\nu: C\times W^2_d(C)\rightarrow W^2_d(C)$ is the second projection, then one has that
$$C_2\cdot \overline{\mathfrak{Node}}_g= -2\theta\cdot [W^2_d(C)]+(d-1)c_1 \bigl(\nu_* \P^{\vee}\big),$$
where the Chern number $c_1(\nu_*\P^{\vee})$ can be computed using \cite{HT}. After a determinantal calculation, one finds that
$c_1(\nu_*\P^{\vee})=3g c_g/4$\   and  $\theta\cdot [W^2_d(C)]=c_g(g-d+5)/4$.
\end{proof}

\begin{proposition}\label{dinm7n} The class of the closure of $\cD_1$ in $\mm_{7, n}$ is given by the formula
$$\dd_1\equiv 44\lambda+6(\psi_1+\psi_2)-6\delta_{\mathrm{irr}}-28\delta_{0: 12}-6\sum_{j=3}^n (\delta_{0:1j}+\delta_{0:2j})-\cdots\in \mathrm{Pic}(\mm_{7, n}).$$
\end{proposition}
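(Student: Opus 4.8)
The plan is to reduce the statement to Proposition \ref{divm72}. First I would note that the defining condition of $\cD_1$ involves only $C$ together with the first two marked points: it requires $x_1$ and $x_2$ to fail to impose independent conditions on some $L\in W^2_7(C)$, i.e.\ to be carried to one and the same point of the corresponding nodal plane septic model of $C$. Hence, if $p\colon \mm_{7, n}\rightarrow \mm_{7, 2}$ denotes the forgetful morphism dropping $x_3, \dots, x_n$, then $\cD_1=p^{-1}(\mathfrak{Node}_7)$, where $\mathfrak{Node}_7\subset \cM_{7, 2}$ is precisely the divisor of Proposition \ref{divm72} for $g=7$; indeed, in that proposition $d=(2\cdot 7+7)/3=7$ and $\rho(7, 2, 7)=1$, so $W^2_7(C)$ is a curve for the general genus $7$ curve, exactly as required. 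Since the fibres of $p$ over the interior $\cM_{7, 2}$ are the irreducible curves $C$ themselves, the preimage $p^{-1}(\overline{\mathfrak{Node}}_7)$ is irreducible of codimension one, so $\dd_1=p^*(\overline{\mathfrak{Node}}_7)$ and $[\dd_1]=p^*[\overline{\mathfrak{Node}}_7]$ in $\mathrm{Pic}(\mm_{7, n})$.

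Next I would specialize Proposition \ref{divm72} to $g=7$. One computes $c_7=24\cdot 5!/(5!\,3!\,1!)=4$, whence $(g+4)c_7=44$, $\frac{g+2}{6}c_7=6$ and $g\,c_7=28$, so that
$$\overline{\mathfrak{Node}}_7\equiv 44\lambda+6(\psi_1+\psi_2)-6\delta_{\mathrm{irr}}-28\,\delta_{0: 12}-\cdots\ \in\ \mathrm{Pic}(\mm_{7, 2}),$$
the dots denoting a combination of boundary classes $\delta_{i: T}$ with $i\geq 1$.

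It then remains to transport this class along $p$. I would use $p^*\lambda=\lambda$, $p^*\delta_{\mathrm{irr}}=\delta_{\mathrm{irr}}$, together with the standard effect of forgetting the $k$-th marked point, namely $\psi_i\mapsto \psi_i-\delta_{0: \{i, k\}}$ and $\delta_{j: T}\mapsto \delta_{j: T}+\delta_{j: T\cup\{k\}}$; iterating over $k=3, \dots, n$ gives
$$p^*\psi_1=\psi_1-\sum_{\emptyset\ne T\subseteq\{3, \dots, n\}}\delta_{0: \{1\}\cup T},\qquad p^*\delta_{0: 12}=\delta_{0: 12}+\sum_{\emptyset\ne T\subseteq\{3, \dots, n\}}\delta_{0: \{1, 2\}\cup T},$$
and the analogue of the first formula for $\psi_2$. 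Substituting into the expression for $\overline{\mathfrak{Node}}_7$, the coefficients of $\lambda$, $\psi_1$, $\psi_2$, $\delta_{\mathrm{irr}}$ and $\delta_{0: 12}$ are unchanged ($44$, $6$, $6$, $-6$, $-28$), while each $\delta_{0: 1j}$ and each $\delta_{0: 2j}$ with $3\leq j\leq n$ acquires the coefficient $-6$ (namely $-1$ times the $\psi_1$-coefficient of $\overline{\mathfrak{Node}}_7$); every remaining contribution is a boundary class $\delta_{i: T}$ with $i\geq 1$ or $|T|\geq 3$, and is swallowed by the final "$\cdots$". This is the asserted formula for $\dd_1$.

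Thus the substance of the proposition is already contained in Proposition \ref{divm72}; the one point deserving care is the identification $\dd_1=p^*(\overline{\mathfrak{Node}}_7)$ at the level of closures — that no spurious component appears over $\mm_{7, 2}\setminus \cM_{7, 2}$ — which follows from the irreducibility of $\mathfrak{Node}_7$ asserted in Proposition \ref{divm72} and the connectedness of the fibres of $p$. Everything else is the routine bookkeeping of forgetful pull-backs, and none of the five displayed coefficients is altered by the suppressed terms.
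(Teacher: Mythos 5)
Your proposal is correct and follows exactly the paper's route: identify $\dd_1$ as $\phi_{12}^*(\overline{\mathfrak{Node}}_7)$, specialize Proposition \ref{divm72} to $g=7$ (where indeed $d=7$ and $c_7=4$), and transport via the standard forgetful pull-back formulas. You have in fact spelled out the bookkeeping that the paper compresses into a single sentence; note also that your reading of the defining condition of $\cD_1$ (namely $h^0(C,L(-x_1-x_2))\geq 2$, i.e.\ $x_1,x_2$ fail to impose independent conditions on $|L|$) is the intended one — the "$\geq 1$" appearing in the paper's definition of $\cD_1$ is a misprint, as the "$\geq 1$" condition holds identically.
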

\begin{proof} We denote by $\phi_{12}:\mm_{7, n}\rightarrow \mm_{7, 2}$ the morphism retaining the first two marked points. Then $\dd_1=\phi_{12}^*(\overline{\mathfrak{Node}}_7)$, and the conclusion follows from  Proposition \ref{divm72} using the pull-back formulas for generators of $\mbox{Pic}(\mm_{7, 2})$, see e.g. \cite{Log} Theorem 2.3.
\end{proof}

\begin{theorem}
$\mm_{7, n}$ is uniruled for $n\leq 13$.
\end{theorem}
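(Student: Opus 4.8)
The plan is to apply Proposition \ref{uniruled} to $X=\mm_{7,n}$ with $D_1:=\dd_1$ (whose class is recorded in Proposition \ref{dinm7n}) and $D_2:=\dd_2=\phi^*(\mathfrak{bn}_7)$, so that everything reduces to producing covering curves $\Gamma_1\subset\dd_1$ and $\Gamma_2\subset\dd_2$ with $\Gamma_i\cdot D_i<0$ satisfying the two determinantal inequalities of that Proposition for all $n\le 13$.

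For $\Gamma_1$ I would copy the construction from the proof of Theorem \ref{gen5}. Start from a general plane septic with $8$ assigned nodes $p_1,\dots,p_8$, choose a general pencil inside the $11$-dimensional system $|\OO_{\PP^2}(7)(-2\sum_{i=1}^8 p_i)|$, blow up $\PP^2$ at the $8$ nodes and the $17$ remaining base points, make a base change of order $2$ along the $2$-section $E_{p_1}$, and separate the two sections $E_{x_1},E_{x_2}$ into which it splits; take $x_1,x_2$ to be these, and $x_3,\dots,x_n$ (with $n-2\le 11$) to be exceptional divisors over free base points. Exactly as in Theorem \ref{gen5} one obtains $\Gamma_1\cdot\lambda=14$, $\Gamma_1\cdot\delta_{\mathrm{irr}}=104$, $\Gamma_1\cdot\psi_1=\Gamma_1\cdot\psi_2=5$, $\Gamma_1\cdot\psi_j=2$ for $3\le j\le n$, $\Gamma_1\cdot\delta_{0:12}=2$, and $\Gamma_1\cdot\delta_{i:T}=0$ otherwise; hence, using the class in Proposition \ref{dinm7n} and the canonical formula (\ref{canmgn}), $\Gamma_1\cdot\dd_1=-4$, $\Gamma_1\cdot\dd_2=2$ and $\Gamma_1\cdot K_{\mm_{7,n}}=2n-24$. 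Varying the plane model shows that these curves sweep out $\dd_1$, so $\Gamma_1$ is a covering curve.

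The real difficulty is $\Gamma_2$, a covering curve of $\phi^*(\mathfrak{bn}_7)$ with $\Gamma_2\cdot\phi^*(\mathfrak{bn}_7)<0$; equivalently $\phi_*(\Gamma_2)$ must be a covering curve of the $4$-gonal divisor $\mm_{7,4}^1\subset\mm_7$ with negative self-intersection. Such a pencil cannot be a Lefschetz pencil of tetragonal curves on a rational surface, since the "slope" of any such pencil is at most $6+\tfrac{12}{g+1}=\tfrac{15}{2}=s(\mathfrak{bn}_7)$; negativity therefore forces $\phi_*(\Gamma_2)$ to meet the boundary divisors $\Delta_1,\Delta_2,\Delta_3\subset\mm_7$, which occur with negative coefficients in $\mathfrak{bn}_7$. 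One is thus led to take a pencil of tetragonal genus $7$ curves — constructed, for instance, inside the Hurwitz scheme of degree $4$ covers of $\PP^1$, or as a pencil of degree $8$ genus $7$ space curves with a fixed $4$-secant line (so that $\mathfrak{bn}_7$ appears via $\mm_{7,8}^3$) — arranged so as to degenerate into a curve of compact type (say of genus-split $3+4$) carrying a limit $\mathfrak g^1_4$, and to lift it to $\mm_{7,n}$ by $n$ suitable disjoint sections. The main obstacle is then to compute the numerical characters of $\Gamma_2$ precisely enough — in particular $\Gamma_2\cdot\phi^*(\mathfrak{bn}_7)<0$, $\Gamma_2\cdot\dd_1>0$ and $\Gamma_2\cdot K_{\mm_{7,n}}$ sufficiently negative — to guarantee that
\begin{equation*}
\begin{vmatrix}\Gamma_1\cdot D_1&\Gamma_1\cdot D_2\\ \Gamma_2\cdot D_1&\Gamma_2\cdot D_2\end{vmatrix}\le 0,\qquad
\begin{vmatrix}\Gamma_1\cdot K_{\mm_{7,n}}&\Gamma_1\cdot D_1\\ \Gamma_2\cdot K_{\mm_{7,n}}&\Gamma_2\cdot D_1\end{vmatrix}<0
\end{equation*}
hold for every $n\le 13$ (and barely fail for $n=14$, consistently with $\kappa(\mm_{7,14})\ge 0$ in Theorem \ref{genul8}). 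Given these, Proposition \ref{uniruled} shows that $K_{\mm_{7,n}}$ is not pseudo-effective, so $\mm_{7,n}$ is uniruled for $n\le 13$.
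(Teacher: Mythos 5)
Your overall strategy coincides with the paper's: apply Proposition \ref{uniruled} with $D_1=\dd_1$ and $D_2=\dd_2=\phi^*(\mathfrak{bn}_7)$, and your curve $\Gamma_1$ is a legitimate variant of the paper's (the paper assigns only ten of the extra marked points and produces the last one via a degree-$7$ base change along a line, but your version with marked points at unassigned base points also covers $\dd_1$, and your numbers $\Gamma_1\cdot\dd_1=-4$, $\Gamma_1\cdot\dd_2=2$, $\Gamma_1\cdot K_{\mm_{7,n}}=2n-24$ check out against Proposition \ref{dinm7n} and (\ref{canmgn})).

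The genuine gap is $\Gamma_2$: you do not construct it, you only list desiderata and candidate approaches, and the heuristic you give for why it must be complicated is incorrect. A pencil of nodal plane models of tetragonal curves on a rational surface can certainly have slope exceeding $s(\mathfrak{bn}_7)=15/2$: the relevant ratio is $\bigl(c_2(S)+4g-4\bigr)/\bigl(\chi(S,\OO_S)+g-1\bigr)$ for the blown-up surface carrying the pencil, and the K3 bound $6+12/(g+1)$ does not apply; consequently there is no need to force $\phi_*(\Gamma_2)$ into $\Delta_1,\Delta_2,\Delta_3$ or to degenerate to admissible covers. The paper's construction is elementary: if $A\in W^1_4(C)$, then $A\otimes\OO_C(3p)\in W^2_7(C)$ for a suitable point $p\in C$, so every tetragonal curve of genus $7$ admits a plane septic model with an ordinary triple point. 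The pencil of septics with a triple point at a fixed general $p$, nodes at five further general points, and passing through $13$ general assigned points yields, after blowing up, a covering curve $\Gamma_2$ of $\dd_2$ with $\Gamma_2\cdot\lambda=7$, $\Gamma_2\cdot\delta_{\mathrm{irr}}=53$, $\Gamma_2\cdot\psi_i=1$ and $\Gamma_2\cdot\delta_{j:T}=0$ for all $(j,T)$; in particular $\Gamma_2\cdot\mathfrak{bn}_7=15\cdot 7-2\cdot 53<0$, and $\Gamma_2$ meets no boundary divisor besides $\delta_{\mathrm{irr}}$. Until such a $\Gamma_2$ is produced and the two determinant inequalities are verified (with this $\Gamma_2$ and your $\Gamma_1$ they do hold exactly for $n\leq 13$, failing at $n=14$), the argument is incomplete.
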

\begin{proof}
We start by constructing a covering curve for $\dd_1$. Choose general points $p_1, \ldots, p_8$, $x_3, \ldots, x_{12}\in \PP^2$, and a general line $l\subset \PP^2$. Then consider the pencil of plane septics of genus $7$ passing through $x_3, \ldots, x_{12}$ and having nodes at $p_1, \ldots, p_8$. Blowing-up the nodes
as well as the base points of the pencil, we obtain a fibration $f:S\rightarrow \PP^1$, where $S:=\mathrm{Bl}_{25}(\PP^2)$. We observe that $f$ has sections $\{E_{x_i}\}_{i=3, \ldots, 12}$, given by the respective effective divisors, a $2$-section given by $E_{p_1}$ and a $7$-section induced by the proper transform of $l$. We make base changes of order $2$ and $7$ respectively, to arrive at the $1$-cycle
$\Gamma_1:=\bigl\{[C_t, \ x_1(t), \ldots, x_{13}(t)]:t\in \PP^1\bigr\}\subset \mm_{7, 13}$,
where $x_1(t)$ and $x_2(t)$ map to the fixed node $p_1\in \PP^2$, whereas the image of $x_{13}(t)$ lies on the line $l$. Then one computes that:
$$
\Gamma_1 \cdot \lambda= 14\cdot g=98, \ \Gamma_1 \cdot \psi_1=\Gamma_1 \cdot \psi_2=35, \ \Gamma_1 \cdot \psi_{3}=\cdots =\Gamma_1 \cdot \psi_{{12}}=14,\ \Gamma_1\cdot \psi_{13}=24.
$$
Furthermore, $\Gamma_1 \cdot \delta_{0: 12}=14,\  \Gamma_1\cdot \delta_{0:\mathrm{irr}}=14\cdot 52=728$, and finally $\Gamma_1\cdot \delta_{j: T}=0$ for all pairs $(j, T)\neq \bigl(0, \{1, 2\}\bigr)$. Clearly  $\Gamma_1$ is a covering curve for $\dd_1$.
\vskip 3pt

Next, we construct a covering curve for $\dd_2$ and use that if $[C]\in \cM_{7, 4}^1$ and $A\in W^1_4(C)$ is the corresponding pencil, then there exists a point $p\in C$ such that $A\otimes \OO_C(3p)\in W^2_7(C)$. To reverse this construction, one fixes general points $p, \{p_i\}_{i=1}^5$, $\{x_j\}_{j=1}^{13} \in \PP^2$ and considers the pencil of genus $7$ septics with a $3$-fold point at $p$, nodes at
$p_1, \ldots, p_5$ and passing through $x_1, \ldots, x_{13}$. This induces a covering curve $\Gamma_2\subset \dd_2$ whose numerical invariants are as follows:
$$\Gamma_2\cdot \lambda=7, \ \Gamma_2\cdot \delta_{\mathrm{irr}}=53, \ \Gamma_2\cdot \psi_i=1 \mbox{ for } i=1, \ldots, 13, \ \ \Gamma_2\cdot \delta_{j: T}=0\ \mbox{ for all }\  (j, T).$$
One computes that  $\Gamma_1\cdot \dd_1=-28, \ \Gamma_2\cdot \dd_2=-14, \Gamma_1\cdot \dd_2=14, \ \Gamma_2\cdot \dd_1=28$,  as well as
$\Gamma_1\cdot K_{\mm_{7, 13}}=24,  \ \Gamma_2\cdot K_{\mm_{7, 13}}=-28$. The assumptions of Proposition \ref{uniruled} are thus fulfilled.
\end{proof}

\end{document}